\def\[#1\]{\begin{equation}#1\end{equation}}
\def\beq{%
   \relax\ifmmode
      \@badmath
   \else
      \ifvmode
         \nointerlineskip
         \makebox[.6\linewidth]%
      \fi
      $$
   \fi
}
\def\eeq{%
   \relax\ifmmode
      \ifinner
         \@badmath
      \else
         $$
      \fi
   \else
      \@badmath
   \fi
   \ignorespaces
}
\def\enddisplaymath{\eeq\global\@ignoretrue}
\newtheorem{thm}{Theorem}
\newtheorem{cor}[thm]{Corollary}
\newtheorem{lem}[thm]{Lemma}
\newtheorem{prop}[thm]{Proposition}
\newtheorem{conj}{Conjecture}
\theoremstyle{remark}
\newtheorem*{rem}{Remark}
\newtheorem{rems}{Remark}[thm]
\newtheorem{eg}{Example}
\theoremstyle{definition}
\newtheorem{defn}{Definition}
\numberwithin{equation}{section}
\numberwithin{thm}{section}
\numberwithin{eg}{section}
\DeclareMathOperator\hgt{ht}
\DeclareMathOperator\Ind{Ind}
\DeclareMathOperator\PS{PS}
\DeclareMathOperator\PGL{PGL}
\DeclareMathOperator\AGL{AGL}
\DeclareMathOperator\GL{GL}
\DeclareMathOperator\Sp{Sp}
\DeclareMathOperator\Mat{Mat}
\DeclareMathOperator\Alt{Alt}
\DeclareMathOperator\Sym{Sym}
\def\even{\widetilde}
\DeclareMathOperator\Stab{Stab}
\newcommand{\Z}{{\mathbb Z}}
\newcommand{\Q}{{\mathbb Q}}
\newcommand{\C}{{\mathbb C}}
\newcommand{\F}{{\mathbb F}}
\newcommand{\disj}{\uplus}
\newcommand{\1}{{\mathbf 1}}
\newcommand{\evenX}{{\tilde X}}
\newcommand{\evenx}{{\tilde x}}
\newcommand{\eveny}{{\tilde y}}
\newcommand{\semidirect}{\rtimes}
\begin{document}

\title{Deformations of permutation representations of Coxeter groups}
\author{Eric M. Rains \thanks{Partially supported by NSF grant DMS-0833464.} \\
Monica J. Vazirani \thanks{Partially supported by NSA grant H982300910076,
and by Caltech and DMS-0833464 during her visits.}}

\date{June 30, 2010}
\maketitle

\begin{abstract}
The permutation representation afforded by a Coxeter group $W$ acting                      
on the cosets of a standard parabolic subgroup inherits many nice                          
properties from $W$ such as
a shellable Bruhat                        
order and a flat deformation over $\Z[q]$ to a representation of the                        
corresponding Hecke algebra.  In this paper we define a larger class of                     
``quasiparabolic" subgroups (more generally, quasiparabolic $W$-sets),
and show that they also inherit these properties.  Our motivating                           
example is the action of the symmetric group on fixed-point-free                            
involutions by conjugation.  

\end{abstract}

\tableofcontents

\section{Introduction}

The motivation for the machinery developed in this paper arose when
the first author was investigating certain conjectures involving Macdonald
polynomials generalizing classical identities of Schur functions related to
the representation theory of symmetric spaces \cite{bcpoly}.  These Schur
function identities are closely related to Littlewood identities and thus
via invariant theory \cite[\S 8]{algebraic} to the action of $S_{2n}$ by
conjugation on fixed-point-free involutions.  Given the close connection
between Macdonald polynomials and Hecke algebras, this suggested that to
prove those conjectures, one should first deform that permutation
representation.

There is a straightforward way of deforming any given representation
of a finite Coxeter group to its corresponding Hecke algebra.
Indeed, when the ground field is algebraically closed of characteristic $0$
and the parameter $q$ is specialized to something other than a  nontrivial root of unity,
the Hecke algebra is isomorphic to the group algebra, 
and thus every irreducible representation deforms. 
So to deform any representation, one needs simply express it
as a direct sum of irreducibles and deform each irreducible. 
However, the resulting deformation is quite complicated:
this process does not give rise to any particularly nice basis.  
In addition,  this construction can fail in the case of infinite Coxeter groups,
as even their finite-dimensional representations can fail to be completely reducible.
One would thus like a version of this construction that works for arbitrary
Coxeter groups and in particular yields a basis for which the structure
constants are polynomials in $q$ rather than having poles at roots of unity. 

There is one special case of a permutation representation,
apart of course from the regular representation, which has
 a particularly nice deformation.
If $W_I \subset W$ is a parabolic subgroup,\footnote{Unless specifically noted
otherwise, parabolic means standard parabolic
throughout the paper, whereas a conjugate of a standard parabolic
subgroup will be called conjugate parabolic.}
then there is a classical construction of a module
deforming $W/W_I$.  To be precise, if we define
$\ell^I(w)$ to be the length of the shortest element
of the coset $w W_I$, then we can define an action of
the corresponding Hecke algebra $H_W(q)$ on
$\bigoplus_{w \in W^I} \Z[q] w W_I$ (where $W^I$ denotes the set of
minimal length left coset representatives) as follows.
\[
T(s) w W_I = \begin{cases}
sw W_I & \ell^I(sw)>\ell^I(w)\\
q w W_I  & \ell^I(sw)=\ell^I(w)\\
(q-1) w W_I +q sw W_I & \ell^I(sw)<\ell^I(w).
\end{cases}
\]
This representation can also be obtained by inducing
the trivial representation from the corresponding parabolic
subalgebra.  For our other deformations of permutation representations,
no such subalgebra exists.  We do not consider the natural problem of deforming
other induced representations.  

If one conjugates a fixed-point-free involution by a simple
transposition, the length either stays the same or changes
by $2$. This suggests constructing a Hecke algebra representation
along the same lines as in the parabolic case, replacing the
length function of a coset by half the length of
the fixed-point-free involution $\iota$, which we refer to as its
height. 
If one does this, it turns out that the result indeed satisfies
the relations of the Hecke algebra.
Similarly, if one instead uses the negated height 
$-\frac{\ell(\iota)}{2}$ one again obtains a well-defined
module, albeit with a rather different looking action.
(In fact, the two modules are indeed isomorphic over 
$\Z[q,q^{-1}]$.)
In either case the module is generated by the unique minimal (maximal)
length 
element, and thus is determined by the ideal
annihilating that element. 
 
It then turned out  \cite{vanish} that one could use the most natural extensions
of these annihilator ideals to the affine Hecke algebra to settle the original
Macdonald polynomial conjectures of the first author. 
It turns out that the existence of a deformation of the above form is 
is actually a fairly stringent condition on a subgroup of a Coxeter 
group.
It remains conjectural that these extended ideals are annihilators
of minimal elements in modules of the affine Hecke algebra coming
from the present construction. 

The present work came about in an attempt to systematically
understand which properties of the set of fixed-point-free involutions
permit such a nice deformation.

One other property of the quotient by a parabolic subgroup is
particularly notable.
Namely, the existence of a very well-behaved partial order induced
by the Bruhat order on $W$.
It was thus natural to look for a corresponding partial order in
the case of fixed-point-free involutions as well as other
permutation representations which admit natural deformations.

With only two exceptions, all of the representations we had found
indeed admitted an analogue of Bruhat order.  
This led us to formulate Definition \ref{defn:qp} below.
Since parabolic subgroups were our prototypical example
of such subgroups, we called our larger class of subgroups
{\em quasiparabolic}.\footnote{After having developed much of the theory presented
  here, we discovered that there was an unrelated notion of
  ``quasi-parabolic'' subgroup for classical Weyl groups already in the
  literature (J. Du, L. Scott, Trans. AMS 352(9), 4325--4353), but by then
  we had grown far too attached to the name to change it!}
Note that just as conjugates of standard parabolic subgroups do not give
rise to nice Hecke algebra modules, we similarly find that conjugates
of quasiparabolic subgroups are rarely quasiparabolic.

Although our main examples of permutation representations take the
form $W/H$ for various subgroups $H$, we take a slightly more
general approach and formulate at least the definition for arbitrary
sets with Coxeter group actions.

In Section \ref{sec:defns} after giving the definitions, we show
that quasiparabolic $W$-sets satisfy an analogue of the 
Strong Exchange Condition
(in a slightly odd form because  there is no notion of reduced word
in a general quasiparabolic set)
and consider other elementary consequences of our definition.

Section \ref{sec:constrs} gives a number of constructions
of quasiparabolic sets, both from Coxeter groups themselves
and from other quasiparabolic sets.   
In particular, we show that any parabolic subgroup is
quasiparabolic (as a very special case of Corollary \ref{cor:induce_from_para}
below), as is the regular representation of $W \times W$ on $W$.
More generally there are combinatorial analogues  of the module-theoretic operations of
 restriction to parabolics, induction from parabolics, and tensor product. 

One of these constructions is:   given a quasiparabolic set with odd
length stabilizers, there is a natural quasiparabolic double cover
with only even length stabilizers. 
This enables us to reduce many of our arguments to the even case.
In particular one finds that the even subgroups of quasiparabolic
subgroups share many of the same properties.

Our final construction (in Section \ref{sec:inv}) 
generalizes the case of fixed-point-free involutions
by giving fairly general conditions under which a conjugacy class
of involutions becomes quasiparabolic with the obvious height function.
(Unfortunately the resulting notion does not appear adequate
to address involutions in infinite Coxeter groups.)

Section \ref{sec:bruhat}
introduces the analogue of Bruhat order and again deals with basic
properties.  The main result of that section  is Theorem \ref{thm:bruhat_para}
which shows that the Bruhat order is essentially unchanged under restriction
to a parabolic subgroup.  We also characterize how the double cover
construction affects Bruhat order.
Apart from these two results (and the straightforward fact that
our Bruhat order on the actions of $W \times W $ on $W$ and  of $W$ on
$W/W_I$ agree with the usual Bruhat order),
we have not attempted to understand how our constructions affect Bruhat
order in general. 

Even without a full understanding of Bruhat order,
we are still able to show that two of the more important topological
properties of Bruhat order apply to a general quasiparabolic subgroup.
The two main results of Section \ref{sec:top} are
Theorem \ref{thm:moeb_even} (and its Corollary \ref{cor:moeb_odd})
which give a formula for the M\"obius function of the Bruhat order,
generalizing results of Verma and Deodhar \cite{VermaD-N:1971,DeodharVV:1977}.
Note that our proof is somewhat simplified by the ability to reduce
to the even stabilizer case when the poset is Eulerian.
Our other topological result generalizes a theorem of Bj\"orner and Wachs
by showing that for any interval in a (bounded) quasiparabolic
set, the corresponding order complex is shellable and in fact  homeomorphic to a sphere or
to a cell (depending on the Euler characteristic). 

In Section \ref{sec:hecke} we show that the construction suggested above
indeed gives well-defined modules over the Hecke algebra. 
We also show that these modules admit a natural bilinear form
induced by the height function. 
We also give an algorithm for finding small sets of generators
of the annihilators of minimal elements of these modules
and in particular show that the ideal associated to a finite
quasiparabolic set is finitely generated even when the Coxeter
group that is acting has infinite order.

In Section \ref{sec:PS} we study Poincar\'e series, i.e., the generating
function of height, of quasiparabolic sets over finite Coxeter groups
and show these generating functions can be controlled by Hecke algebra modules.
We find both that such Poincar\'e series are always palindromes
(despite the fact that Bruhat order rarely admits an order-reversing
automorphism) and that the Poincar\'e series of a quasiparabolic
$W$-set always divides the Poincar\'e series of $W$, despite
the absence of any combinatorial interpretation of the quotient.
The palindrome property suggests the existence of order-reversing automorphisms of
the Hecke algebra modules; we give an explicit conjecture along these lines. 
Note that automorphisms of the form prescribed by the conjecture would give rise to
analogues of the $R$-polynomials
of Kazhdan-Lusztig theory. 

Finally, in Section \ref{sec:egs}, we discuss a number of examples of
finite quasiparabolic sets, including several quasiparabolic
subgroups of finite groups which do not come from any of our general
constructions. 
We have fairly extensively explored the set of quasiparabolic subgroups
of finite Coxeter groups (in rank $ \le 8$) but space does not permit
an exhaustive discussion. We thus focus primarily on the analogue of
fixed-point-free involutions and cases that exhibit suggestive phenomena.

There are a number of open problems scattered throughout the paper.
In the interest of timeliness we focused our efforts on developing
enough of a theory to show that the class of quasiparabolic subgroups
is a natural one.  There are also some implicit open problems
such as classifying all quasiparabolic subgroups of all finite
Coxeter groups.  Though significant progress could most likely be made
on this last problem without substantially new ideas, other 
problems such as the existence of $R$-polynomials appear to require
new insight.

For basic results and notation on Coxeter groups we refer the reader
to \cite{HumphreysJE:1990}. 
As there, we will only consider Coxeter groups of finite rank. 
We denote by $R(W)$ the set of reflections (i.e., conjugates of the simple
reflections $s\in S$) of a Coxeter system $(W,S)$.
We denote by $W^0 = \{w \in W \mid \ell(w) \equiv 0 \pmod 2\}$ the {\em even}
subgroup of $W$, and more generally for any $H \subset W$ let $H^0 = H \cap W^0$.
For instance, in the case of the symmetric group $S_n$ its even subgroup
is the alternating group, denoted here $\Alt_n$, to avoid confusion with $A_n$.
\section{Definitions}\label{sec:defns}

\begin{defn}
Let $(W,S)$ be a Coxeter system.  A {\em scaled $W$-set} is a pair
$(X,\hgt)$ with $X$ a $W$-set and $\hgt:X\to \Z$ a function such that
$|\hgt(sx)-\hgt(x)|\le 1$ for all $s\in S$.  An element $x\in X$ is
$W$-{\em minimal} if $\hgt(sx)\ge \hgt(x)$ for all $s\in S$, and similarly for
$W$-{\em maximal}.
\end{defn}

This is invariant under shifting and negation of heights; more precisely:

\begin{prop}
Let $(X,\hgt)$ be a scaled $W$-set.  Then for any $k\in \Z$, the new height
functions $(k+\hgt)(x):=k+\hgt(x)$ and $(k-\hgt)(x):=k-\hgt(x)$ make
$(X,k+\hgt)$ and $(X,k-\hgt)$ scaled $W$-sets.
\end{prop}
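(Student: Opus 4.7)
The plan is to verify directly that the defining inequality $|\hgt(sx) - \hgt(x)| \le 1$ is preserved under the two operations, since both are affine in $\hgt$ and the condition involves only the absolute value of a difference of height values at points in the same $W$-orbit step. The $W$-action on $X$ itself is not changed, so the only thing to check is the height inequality for each $s \in S$ and each $x \in X$.

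For the shifted height $k + \hgt$, I would compute
\[
|(k+\hgt)(sx) - (k+\hgt)(x)| = |(k + \hgt(sx)) - (k + \hgt(x))| = |\hgt(sx) - \hgt(x)| \le 1,
\]
where the last inequality is the hypothesis that $(X,\hgt)$ is scaled. For the negated-and-shifted height $k - \hgt$, I would similarly compute
\[
|(k-\hgt)(sx) - (k-\hgt)(x)| = |{-\hgt(sx)} + \hgt(x)| = |\hgt(sx) - \hgt(x)| \le 1.
\]
Since the underlying $W$-set structure on $X$ has not changed, this verifies the definition in both cases.

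There is no real obstacle here; the proposition is essentially an observation that the condition $|\hgt(sx) - \hgt(x)| \le 1$ is invariant under the affine transformations $\hgt \mapsto k + \hgt$ and $\hgt \mapsto k - \hgt$ of the height function. It is stated explicitly because the paper will make repeated use of shifting and of replacing $\hgt$ by $-\hgt$ (for instance when switching between minimal and maximal elements, and between the two Hecke algebra modules mentioned in the introduction).
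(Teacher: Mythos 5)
Your verification is correct and is exactly the routine computation the paper has in mind; indeed the paper omits the proof entirely, treating the invariance of $|\hgt(sx)-\hgt(x)|\le 1$ under $\hgt\mapsto k\pm\hgt$ as immediate. Nothing further is needed.
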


\begin{defn}\label{defn:qp}
A {\em quasiparabolic} $W$-set is a scaled $W$-set $X$ satisfying the
following two properties:
\begin{list}{\labelitemi}{\leftmargin=1em}
\item[(QP1)] For all $r\in R(W)$, $x\in X$, if $\hgt(rx)=\hgt(x)$, then $rx=x$.
\item[(QP2)] For all $r\in R(W)$, $x\in X$, $s\in S$, if $\hgt(rx)>\hgt(x)$ and
$\hgt(srx)<\hgt(sx)$, then $rx=sx$.
\end{list}
\end{defn}

The main motivating example of a quasiparabolic $W$-set is the homogeneous
space $W/W_I$ for a (standard) parabolic subgroup $W_I$, where the height of
a coset is the length of its minimal representative; see Corollary
\ref{cor:induce_from_para} below.  In general, as we will see, many of the
well-known properties of these parabolic homogeneous spaces extend to
general quasiparabolic $W$-sets.  (This explains our choice of
terminology.)

The case $I=\emptyset$ gives the {\em regular representation} of $W$; i.e.,
the action of $W$ on itself by left multiplication, with height given by
length.  This extends to the action of $W\times W$ on $W$ by left and right
multiplication, see Theorem \ref{thm:diag_is_qp} below.

The situation of property QP2 is fairly rigid, as seen in the following Lemma.

\begin{lem}\label{lem:useful_lemma}
Let $(X,\hgt)$ be a scaled $W$-set, and suppose $r\in R(W)$, $s\in S$,
$x\in X$ are such that $\hgt(rx)>\hgt(x)$ and $\hgt(srx)<\hgt(sx)$.  Then
$\hgt(rx)=\hgt(sx)=\hgt(x)+1=\hgt(srx)+1$.
\end{lem}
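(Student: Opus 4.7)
The plan is to chain the obvious inequalities. Set $a=\hgt(x)$, $b=\hgt(sx)$, $c=\hgt(rx)$, $d=\hgt(srx)$. The goal is to show $c=b=a+1=d+1$.

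First I would apply the scaled property (the $\le 1$ bound) in the two places where a simple reflection acts: to $x$, giving $b\le a+1$; and to $rx$ (since $srx=s\cdot(rx)$), giving $c\le d+1$. Combined with the hypotheses $c>a$ (so $c\ge a+1$) and $d<b$ (so $d\le b-1$), this yields the chain
\[
a+1 \;\le\; c \;\le\; d+1 \;\le\; b \;\le\; a+1.
\]
Every inequality must therefore be an equality, which is exactly the conclusion: $c=a+1$, $b=a+1$, and $d=a=c-1$.

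There is essentially no obstacle here; the only subtlety worth flagging is that $r$ is an arbitrary reflection and not a simple one, so a priori $|c-a|$ and $|d-b|$ need not be bounded by $1$. The proof sidesteps this by never applying the scaled bound to $r$, only to $s$ acting on the two points $x$ and $rx$; the reflection $r$ enters purely through the strict inequalities $c>a$ and $d<b$ supplied by the hypothesis. So the entire argument is a three-line squeeze, and no use is made of (QP1) or (QP2) — the lemma holds for any scaled $W$-set.
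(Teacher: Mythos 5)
Your proof is correct and is essentially the paper's own argument: both use exactly the two scaled bounds $|\hgt(sx)-\hgt(x)|\le 1$ and $|\hgt(srx)-\hgt(rx)|\le 1$ together with integrality of the strict hypotheses to squeeze all differences to $1$. Your observation that the reflection $r$ enters only through the strict inequalities, so the lemma needs nothing beyond the scaled hypothesis, matches the paper's statement of the lemma for general scaled $W$-sets.
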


\begin{proof}
Since $(X,\hgt)$ is a scaled $W$-set, it follows that
$|\hgt(sx)-\hgt(x)|\le 1$ and $|\hgt(rx)-\hgt(srx)|\le 1$.  Since
$\hgt(rx)-\hgt(x)\ge 1$ (by integrality) and similarly
$\hgt(sx)-\hgt(srx)\ge 1$, the triangle inequality forces all differences
to be 1 as claimed.
\end{proof}

\begin{prop}
If $(X,\hgt)$ is quasiparabolic, then so are $(X,k+\hgt)$ and $(X,k-\hgt)$ for
any $k\in \Z$.
\end{prop}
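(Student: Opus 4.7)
The plan is to reduce to two separate verifications, using the preceding proposition to handle the scaled $W$-set part, and then checking (QP1) and (QP2) directly.

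First I would dispose of the shifted case $(X, k+\hgt)$.  The key observation is that (QP1) and (QP2) are phrased purely in terms of equalities and strict inequalities among values of $\hgt$ evaluated at various points, and every such condition is invariant under adding the same constant $k$ to all heights.  Hence (QP1) and (QP2) for $\hgt$ immediately imply them for $k+\hgt$.  Combined with the preceding proposition, this handles the $(X, k+\hgt)$ case completely.

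For the negated case $(X, k-\hgt)$, by the same logic it suffices to treat $\hgt' := -\hgt$ (the extra shift $k$ is then absorbed by the previous paragraph).  Property (QP1) is again immediate, since $\hgt'(rx)=\hgt'(x)$ is equivalent to $\hgt(rx)=\hgt(x)$, which forces $rx=x$.  The interesting part is (QP2): suppose $\hgt'(rx) > \hgt'(x)$ and $\hgt'(srx) < \hgt'(sx)$, i.e.\ $\hgt(rx) < \hgt(x)$ and $\hgt(srx) > \hgt(sx)$.  Then I would apply the original (QP2) with the point $x$ replaced by $rx$ (and the same reflection $r$ and simple reflection $s$): the hypotheses $\hgt(r(rx)) = \hgt(x) > \hgt(rx)$ and $\hgt(s\,r(rx)) = \hgt(sx) < \hgt(srx) = \hgt(s(rx))$ are precisely what we have, so (QP2) yields $r(rx) = s(rx)$, that is, $x = srx$.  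Multiplying on the left by $s$ gives $sx = rx$, which is exactly the conclusion of (QP2) for $\hgt'$.

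The main step is the substitution $x \mapsto rx$ in (QP2) to flip the direction of the inequalities; everything else is routine bookkeeping, and I do not anticipate any obstacle beyond noticing this substitution.
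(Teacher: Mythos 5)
Your proof is correct and takes essentially the same route as the paper: shift invariance is immediate, (QP1) is symmetric, and the negated case of (QP2) is handled by a substitution that reverses the two inequalities. The paper uses the substitution $(r,x)\mapsto(srs,\,sx)$ where you use $(r,x)\mapsto(r,\,rx)$, but both are equally valid one-line symmetry observations yielding the same conclusion $rx=sx$.
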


\begin{proof}
Shifting the height clearly doesn't affect either property QP1 or QP2, so it
suffices to consider the negated height function $-\hgt$.  Property QP1 is
again preserved by this, but property QP2 appears to be asymmetrical.  However,
we observe that if $\hgt(rx)>\hgt(x)$ and $\hgt(srx)<\hgt(sx)$, then replacing
$r$ by $srs$ and $x$ by $sx$ reverses the inequalities, so property QP2 is in
fact symmetrical as well.
\end{proof}

\begin{prop}
If $(X,\hgt)$ is a quasiparabolic $W$-set, then for any 
parabolic
subgroup $W_I\subset W$, $(X,\hgt)$ is a quasiparabolic $W_I$-set.
\end{prop}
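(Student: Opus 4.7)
The plan is to observe that all three defining conditions—being a scaled $W$-set, QP1, and QP2—quantify over simple reflections or reflections, and to verify that when we pass from $W$ to the parabolic subgroup $W_I$ the relevant sets of group elements simply \emph{shrink}. Thus each condition for $W_I$ is an instance of the corresponding condition for $W$.

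Concretely, first I would recall the standard fact (from \cite{HumphreysJE:1990}) that $(W_I, I)$ is itself a Coxeter system with simple reflections $I \subseteq S$, and that its set of reflections satisfies $R(W_I) = W_I \cap R(W) \subseteq R(W)$: every $W_I$-reflection is a $W_I$-conjugate of some $s \in I$, hence also a $W$-conjugate of a simple reflection of $W$. This is the only non-formal ingredient.

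Given this, the scaled condition $|\hgt(sx)-\hgt(x)|\le 1$ for all $s\in I$ is immediate because $I\subseteq S$. For QP1, if $r \in R(W_I)$ satisfies $\hgt(rx)=\hgt(x)$, then $r\in R(W)$ and QP1 for $(X,\hgt)$ as a $W$-set gives $rx=x$. For QP2, given $r\in R(W_I)$, $s\in I$, $x\in X$ with $\hgt(rx)>\hgt(x)$ and $\hgt(srx)<\hgt(sx)$, we again have $r\in R(W)$ and $s\in S$, so QP2 for $W$ yields $rx=sx$.

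There is no real obstacle here: the proof is essentially a bookkeeping check that the quantifier ranges in the definition of quasiparabolic only shrink upon restriction. The single point worth mentioning explicitly is the containment $R(W_I)\subseteq R(W)$, which is the bridge between the hypothesis and the conclusion.
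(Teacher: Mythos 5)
Your proof is correct and matches the paper's own argument, which consists precisely of the observation that $R(W_I)\subset R(W)$ (together with $I\subseteq S$), so that every instance of the scaled, QP1, and QP2 conditions for $W_I$ is already an instance of the corresponding condition for $W$. You have simply written out the bookkeeping that the paper leaves implicit.
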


\begin{proof}
Follows immediately from the fact that $R(W_I)\subset R(W)$.
\end{proof}

Note that in the case of the regular representation of $W$, with $x=1$, the
following ``strong exchange condition'' becomes the usual strong exchange
condition.

\begin{thm}[Strong Exchange]\label{thm:strong_exch}
Let $(X,\hgt)$ be a quasiparabolic $W$-set and  let $x \in X$.
Let $w=s_1\cdots s_k\in W$ be an arbitrary word.  If
$\hgt(rx)>\hgt(x)$ and $\hgt(wrx)<\hgt(wx)$, then there exists $1\le i\le k$
such that $wrx = s_1\cdots s_{i-1}s_{i+1}\cdots s_k x$.
\end{thm}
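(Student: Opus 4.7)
The plan is to induct on the length $k$ of the word $w$. The base case $k=0$ (so $w=1$) is vacuous, since the two hypotheses $\hgt(rx)>\hgt(x)$ and $\hgt(wrx)<\hgt(wx)$ would directly contradict each other. For the inductive step, write $w=w's$ with $w'=s_1\cdots s_{k-1}$ and $s=s_k$, and introduce the ``conjugated'' data
\[
r'' := srs \in R(W), \qquad x' := sx \in X.
\]
These are chosen so that $r''x' = srx$, hence $w'r''x' = wrx$ and $w'x' = wx$. Thus the hypothesis $\hgt(wrx)<\hgt(wx)$ translates directly into $\hgt(w'r''x')<\hgt(w'x')$; to apply induction to $(w',r'',x')$ we only need to verify $\hgt(r''x')>\hgt(x')$, i.e., $\hgt(srx)>\hgt(sx)$.

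I would split into three cases according to the sign of $\hgt(srx)-\hgt(sx)$. If $\hgt(srx)>\hgt(sx)$, induction applies and yields $w'r''x' = s_1\cdots \widehat{s_i}\cdots s_{k-1}\,x'$ for some $i\le k-1$; unwinding $x'=sx$ and $s_k=s$ gives the desired identity $wrx = s_1\cdots \widehat{s_i}\cdots s_{k-1}s_k\,x$ with $i<k$. If $\hgt(srx)=\hgt(sx)$, then QP1 applied to the reflection $r''$ and the element $x'$ forces $r''x'=x'$, which unwinds to $rx=x$ and hence $\hgt(rx)=\hgt(x)$, contradicting $\hgt(rx)>\hgt(x)$. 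If $\hgt(srx)<\hgt(sx)$, then the pair of inequalities $\hgt(rx)>\hgt(x)$ and $\hgt(srx)<\hgt(sx)$ is exactly the hypothesis of QP2, which yields $rx=sx$; then
\[
wrx = w's(rx) = w's(sx) = w'x = s_1\cdots s_{k-1}\,x,
\]
which is the conclusion with $i=k$.

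The only point requiring genuine insight is the choice to conjugate rather than to peel letters off directly: a naive attempt to induct by comparing $w$ and $w'$ runs into trouble because one does not control the relationship between $\hgt(w'rx)$ and $\hgt(w'x)$, whereas the substitution $(r,x)\rightsquigarrow(srs,sx)$ produces a setup in which the length-$k-1$ inductive hypothesis and the two quasiparabolic axioms QP1 and QP2 together cover all three possible comparisons between $\hgt(srx)$ and $\hgt(sx)$. This is also essentially where Lemma \ref{lem:useful_lemma} lives in spirit (QP2 cases are rigid), though we do not need the lemma's explicit conclusion here. No further technical obstacle is anticipated.
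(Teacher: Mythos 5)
Your proof is correct and is essentially the paper's argument repackaged as an induction on $k$: unrolling your recursion produces exactly the paper's sequence $x_i = s_{i+1}\cdots s_k x$, $y_i = r_i x_i$ with $r_i = s_{i+1}\cdots s_k r s_k\cdots s_{i+1}$, with the paper locating the crossing index by a discrete intermediate-value argument where you locate it by peeling off the last letter, and with QP1 ruling out equality and QP2 handling the flip in both versions.
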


\begin{proof}
Let $x_i = s_{i+1}\cdots s_k x$, $y_i = s_{i+1}\cdots s_k r x = r_i x_i$,
where $r_i$ is the reflection
\[
r_i = s_{i+1}\cdots s_k r s_k \cdots s_{i+1},
\]
and observe that $\hgt(y_0)<\hgt(x_0)$ and $\hgt(y_k)>\hgt(x_k)$, so there
exists $1\le i\le k$ such that $\hgt(y_{i-1})<\hgt(x_{i-1})$ but $\hgt(y_i)\ge
\hgt(x_i)$.  If equality holds, then $y_i=r_ix_i=x_i$, so
$y_{i-1}=s_{i-1}y_i=s_{i-1}x_i=x_{i-1}$, a contradiction.  But then we may
apply property QP2 to deduce that
\[
y_i = r_i x_i = s_i x_i.
\]
The theorem follows.
\end{proof}

\begin{thm}\label{thm:minimal_eq_minimum}
Let $x$ be a $W$-minimal element of the quasiparabolic $W$-set
$(X,\hgt)$.  Then for all $w\in W$, $\hgt(wx)\ge \hgt(x)$, with equality iff $wx=x$.
\end{thm}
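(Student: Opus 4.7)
The plan is to prove the inequality $\hgt(wx)\ge \hgt(x)$ first, and then handle the equality case, both by induction on $\ell(w)$ with the Strong Exchange Condition (Theorem \ref{thm:strong_exch}) as the essential lever.

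For the inequality, I would proceed by contradiction: suppose there exists $w\in W$ with $\hgt(wx)<\hgt(x)$ and choose one of minimum length. Clearly $\ell(w)\ge 1$, so write $w = w' s$ in some reduced form with $\ell(w')=\ell(w)-1$. By minimality of $\ell(w)$, we have $\hgt(w'x)\ge \hgt(x)$. Since $x$ is $W$-minimal, $\hgt(sx)\ge \hgt(x)$, and if $sx=x$ then $wx=w'x$, giving $\hgt(wx)\ge \hgt(x)$, a contradiction. Hence $\hgt(sx)>\hgt(x)$, and setting $r=s$ we have $\hgt(w'\cdot rx)=\hgt(wx)<\hgt(x)\le \hgt(w'x)$, so $\hgt(w'rx)<\hgt(w'x)$. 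Strong Exchange applied to the word $w'$ and reflection $r$ then yields some $v$ with $\ell(v)\le \ell(w')-1 < \ell(w)$ and $vx = w'rx = wx$, contradicting the minimality of $\ell(w)$.

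For the equality statement, I would induct on $\ell(w)$, the base $\ell(w)=0$ being trivial. For $\ell(w)\ge 1$, write $w = w' s$ reduced with $r=s$, and suppose $\hgt(wx)=\hgt(x)$. By the first part, $\hgt(w'x)\ge \hgt(x)$ and $\hgt(rx)\ge \hgt(x)$. If $rx=x$ then $wx=w'x$ has height $\hgt(x)$, so induction gives $w'x=x$, hence $wx=x$. Otherwise $\hgt(rx)>\hgt(x)$, and we split on $\hgt(w'x)$:
\begin{list}{\labelitemi}{\leftmargin=1em}
\item[(A)] If $\hgt(w'x)=\hgt(x)$, induction gives $w'x=x$, so $w'\in \Stab_W(x)$; writing $wx = w'rx = (w'rw'^{-1})\cdot w'x = r'x$ where $r'=w'rw'^{-1}$ is a reflection, the equality $\hgt(r'x)=\hgt(wx)=\hgt(x)$ combined with QP1 forces $r'x=x$, i.e.\ $wx=x$.
\item[(B)] If $\hgt(w'x)>\hgt(x)$, then $\hgt(w'rx)=\hgt(x)<\hgt(w'x)$, so Strong Exchange produces $v$ with $\ell(v)<\ell(w)$ and $vx=wx$; induction applied to $v$ yields $vx=x$, hence $wx=x$.
\end{list}

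The main obstacle I anticipate is case (A): here Strong Exchange cannot be invoked (both heights coincide), and the trick of conjugating $s$ by $w'$ to convert the right-multiplication into the action of a genuine reflection on $x$, so that QP1 becomes applicable, is the key nonroutine step. Everything else reduces to bookkeeping with the scaled-set height bound $|\hgt(sy)-\hgt(y)|\le 1$ and repeated shortening via Strong Exchange.
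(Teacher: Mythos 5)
Your proof is correct. It uses the same core ingredients as the paper's argument --- a minimal counterexample / induction on word length, the Strong Exchange Condition (Theorem \ref{thm:strong_exch}), and property QP1 --- but deploys them along a genuinely different decomposition. The paper treats the inequality and the equality characterization in a single stroke: it scans the word $s_1\cdots s_k$ for the largest index $j$ at which the height fails to increase, shows via QP1 that the height must actually drop there, conjugates $s_j$ past the suffix $s_{j+1}\cdots s_k$ to manufacture the reflection fed into Strong Exchange, and deletes \emph{two} letters at once to contradict minimality. You instead peel off the terminal letter $s$, take $r=s$ itself, and apply Strong Exchange to the prefix $w'$; you then split off the equality statement as a separate induction, closing its delicate case (A) by applying QP1 to the conjugated reflection $w'sw'^{-1}$ --- a mechanism the paper never needs because equality counterexamples are absorbed into its single minimal-counterexample framework. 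Your organization buys a cleaner argument with essentially no height bookkeeping along the interior of the word (the paper must track that the height rises by exactly one at every step past $j$), at the cost of an extra case split; the paper's version is more compact but applies exchange to a non-simple conjugated reflection. One tiny point of hygiene: your inference from ``$sx\ne x$ and $\hgt(sx)\ge\hgt(x)$'' to ``$\hgt(sx)>\hgt(x)$'' is the contrapositive of QP1 and deserves an explicit citation, but this is cosmetic.
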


\begin{proof}
Suppose otherwise, and let $w=s_1s_2\cdots s_k$ be a counterexample of
minimum length.  In particular, $\hgt(s_k x)=\hgt(x)+1$, since otherwise $s_k
x=x$, and $ws_k$ is a counterexample of length $k-1$.  Let $1\le j<k$ thus
be the largest index such that
\[
\hgt(s_js_{j+1}\cdots s_k x)\le \hgt(s_{j+1}\cdots s_k x).
\]
If equality holds here, then again we can remove $s_j$ from $w$ to obtain a
shorter counterexample, so
\[
\hgt(s_js_{j+1}\cdots s_k x)=\hgt(s_{j+1}\cdots s_k x)-1.
\]
Let $r$ be the reflection
\[
r = s_k \cdots s_{j+1} s_j s_{j+1}\cdots s_k.
\]
Then by assumption
\[
\hgt(s_{j+1}\cdots s_k r x) = \hgt(s_j s_{j+1}\cdots s_k x) = \hgt(s_{j+1}\cdots
s_k x)-1,
\]
while
\[
\hgt(rx) \ge \hgt(s_j s_{j+1}\cdots s_k x)-(k-j) = \hgt(x)+1>\hgt(x),
\]
where the second equality follows from maximality of $j$.  We may thus
apply strong exchange to conclude that
\[
s_j s_{j+1}\cdots s_k x
= s_{j+1}\cdots s_k r x = s_{j+1}\cdots s_{l-1} s_{l+1}\cdots s_k x
\]
for some $l$.  But then $s_j$ and $s_l$ can be removed from $w$ without
affecting $wx$, contradicting minimality of $w$.
\end{proof}

\begin{rem}
Of course, the analogous argument applies in the case of a $W$-maximal
element, by symmetry.
\end{rem}

\begin{cor}
Each orbit of a quasiparabolic $W$-set contains at most one $W$-minimal and
at most one $W$-maximal element.
\end{cor}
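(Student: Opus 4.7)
The plan is to deduce this directly from Theorem \ref{thm:minimal_eq_minimum} together with its $W$-maximal analogue noted in the preceding remark. Suppose $x$ and $y$ are two $W$-minimal elements lying in the same orbit, so that $y = wx$ for some $w \in W$. Applying the theorem to the $W$-minimal element $x$ gives $\hgt(y) = \hgt(wx) \ge \hgt(x)$, and applying it to the $W$-minimal element $y$ (with the group element $w^{-1}$) gives $\hgt(x) = \hgt(w^{-1}y) \ge \hgt(y)$. Hence $\hgt(wx) = \hgt(x)$, and the equality clause of the theorem forces $wx = x$, i.e., $y = x$.

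The argument for $W$-maximal elements is identical after replacing $\hgt$ by $-\hgt$, which yields another quasiparabolic $W$-set by the negation proposition proved earlier; under this change of height, $W$-maximal elements become $W$-minimal, and the above reasoning applies verbatim.

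There is no real obstacle here beyond correctly invoking Theorem \ref{thm:minimal_eq_minimum} in both directions so as to extract the equality $\hgt(x)=\hgt(y)$; the uniqueness conclusion is then immediate from the ``iff $wx=x$'' clause.
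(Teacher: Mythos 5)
Your argument is correct and is essentially identical to the paper's proof: both apply Theorem \ref{thm:minimal_eq_minimum} to each of the two minimal elements in turn to force equal heights, then invoke the equality clause, and both dispose of the maximal case by symmetry (negating the height). Nothing further is needed.
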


\begin{proof}
If $x$ and $wx$ are $W$-minimal elements, then $\hgt(wx)\ge \hgt(x)$ and
$\hgt(x)=\hgt(w^{-1}(wx))\ge \hgt(wx)$, so $\hgt(wx)=\hgt(x)$ and thus
$wx=x$.
\end{proof}

\begin{defn}
Let $(X,\hgt)$ be a quasiparabolic $W$-set with $W$-minimal element
$x_0$.  Suppose $x \in X$ has height $\hgt(x)  = k + \hgt(x_0)$.  Then 
we call $s_1 s_2 \cdots s_k x_0$ a {\em reduced expression} for $x$ if
$x = s_1 s_2 \cdots s_k  x_0$,
$s_i \in S$.
By abuse of notation, we also call $w x_0$  reduced where $w = s_1 s_2 \cdots s_k $
is the corresponding reduced word.
\end{defn}

In the case that $X$ has a $W$-minimal element, the strong exchange
condition can be restated in the following way, which is more traditional
in the parabolic case.

\begin{cor}\label{cor:strong_exch}
If $x_0$ is $W$-minimal in the quasiparabolic $W$-set $(X,\hgt)$, and $r\in R(W)$,
$w=s_1\cdots s_k\in W$ are such that $\hgt(wx_0)>\hgt(rwx_0)$, then there exists
$1\le i\le k$ such that
\[
rwx_0 = s_1\cdots s_{i-1}s_{i+1}\cdots s_k x_0.
\]
If $wx_0$ is a reduced expression 
then $i$ is unique.
\end{cor}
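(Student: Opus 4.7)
The plan is to derive the existence statement by reducing to Theorem~\ref{thm:strong_exch} via conjugation. Setting $r' = w^{-1} r w \in R(W)$, we have $rwx_0 = wr'x_0$, so the hypothesis $\hgt(wx_0) > \hgt(rwx_0)$ becomes $\hgt(wr'x_0) < \hgt(wx_0)$, matching one of the hypotheses of the strong exchange theorem. The remaining hypothesis, $\hgt(r'x_0) > \hgt(x_0)$, follows from Theorem~\ref{thm:minimal_eq_minimum}: since $x_0$ is $W$-minimal, $\hgt(r'x_0) \ge \hgt(x_0)$ with equality only when $r'x_0 = x_0$, and the latter would yield $rwx_0 = wx_0$, contradicting the corollary's hypothesis. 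Applying Theorem~\ref{thm:strong_exch} with element $x_0$, reflection $r'$, and word $s_1 \cdots s_k$ then produces the desired index $i$.

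For uniqueness, assume $wx_0$ is reduced. I would revisit the sequences $x_j = s_{j+1}\cdots s_k x_0$ and $y_j = s_{j+1}\cdots s_k r' x_0$ introduced in the proof of Theorem~\ref{thm:strong_exch}, and show that any index $i$ with $rwx_0 = s_1\cdots s_{i-1} s_{i+1}\cdots s_k x_0$ must correspond to a single ``crossing'' in the comparison between $\hgt(x_j)$ and $\hgt(y_j)$. Unwinding the definitions, such an index satisfies $y_{i-1} = x_i$ (hence $y_i = s_i y_{i-1} = s_i x_i = x_{i-1}$), pinning down $\hgt(y_{i-1}) = \hgt(x_i)$ and $\hgt(y_i) = \hgt(x_{i-1})$. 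Because $wx_0$ is reduced, $\hgt(x_j)$ strictly decreases by $1$ at each step, while $\hgt(y_j)$ can change by at most $1$ per step; consequently the difference $d_j := \hgt(x_j) - \hgt(y_j)$ is non-increasing in $j$. At a valid index we read off $d_{i-1} = 1$ and $d_i = -1$, and non-increasingness of $d_j$ forbids this jump from occurring at more than one index.

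The main obstacle is not deep, but it requires some care: one must translate the algebraic identity $rwx_0 = s_1\cdots\hat{s_i}\cdots s_k x_0$ into the geometric condition $y_i = s_i x_i$ that was actually verified inside the proof of Theorem~\ref{thm:strong_exch}, and then use the reduced hypothesis to upgrade a weak monotonicity of $d_j$ into a bona fide uniqueness statement. Everything else is bookkeeping against Theorems~\ref{thm:strong_exch} and~\ref{thm:minimal_eq_minimum}.
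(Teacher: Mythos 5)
Your existence argument is exactly the paper's: conjugate to $r'=w^{-1}rw$, use Theorem~\ref{thm:minimal_eq_minimum} to rule out $\hgt(r'x_0)=\hgt(x_0)$ (which would force $rwx_0=wx_0$), and invoke Theorem~\ref{thm:strong_exch}. For uniqueness you diverge. The paper's proof is a two-line word manipulation: if $i<j$ both worked, then equating the two deleted expressions and cancelling the common prefix yields $wx_0=s_1\cdots\widehat{s_i}\cdots\widehat{s_j}\cdots s_kx_0$, an expression of length $k-2$, contradicting reducedness. You instead re-enter the proof of Theorem~\ref{thm:strong_exch} and run a monotone-crossing argument on $d_j=\hgt(x_j)-\hgt(y_j)$; I checked the key steps and they are sound: a valid index $i$ does give $y_{i-1}=x_i$ and $y_i=x_{i-1}$ (multiply the identity $rwx_0=s_1\cdots\widehat{s_i}\cdots s_kx_0$ on the left by $s_{i-1}\cdots s_1$ and use $rw=wr'$), reducedness forces $\hgt(x_j)$ to drop by exactly $1$ per step so $d_j$ is non-increasing, and $d_{i-1}=1>-1=d_i$ pins down the unique sign change. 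Your route is longer but yields a sharper structural fact (the deleted position is the unique crossing of the two height profiles, which is in the spirit of the usual root-theoretic proof of strong exchange); the paper's route is shorter and needs nothing beyond the statement of the corollary itself. Both are correct.
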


\begin{proof}
Apply strong exchange to the reflection $w^{-1}rw$; by minimality,
$\hgt(x_0)\le \hgt(w^{-1}rwx_0)$, and equality would imply $wx_0=rwx_0$.
If $i$ is not unique, so that
\[
s_1\cdots s_{i-1}s_{i+1}\cdots s_k x_0
=
s_1\cdots s_{j-1}s_{j+1}\cdots s_k x_0
\]
for some $j$, then
\[
s_1\cdots s_{i-1}s_{i+1}\cdots s_{j-1}s_{j+1}\cdots s_k x_0
=
wx_0,
\]
and $wx_0$ is not reduced.
\end{proof}

In the case that $X$ is transitive, or equivalently is a homogeneous space,
with a minimal element, the height function is essentially just given by
the length of a minimal coset representative (as in the parabolic case).
More precisely, we have the following.

\begin{cor}\label{cor:ht_from_stab}
Let $x_0$ be a $W$-minimal element of the transitive quasiparabolic $W$-set
$(X,\hgt)$.  Then for all $y\in X$,
\[
\hgt(y) = \hgt(x_0)+\min_{wx_0=y}\{\ell(w)\}.
\]
\end{cor}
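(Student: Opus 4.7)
The plan is to prove both inequalities separately. For the easy direction $\hgt(y) - \hgt(x_0) \le \min_{wx_0 = y} \ell(w)$, I would take any $w \in W$ with $wx_0 = y$ and a reduced word $w = s_1 \cdots s_k$, then telescope along the chain $x_0, s_k x_0, s_{k-1}s_k x_0, \ldots, s_1 \cdots s_k x_0 = y$. Since $(X,\hgt)$ is scaled, each consecutive height differs by at most $1$, giving $\hgt(y) - \hgt(x_0) \le k = \ell(w)$. (Note that $\hgt(y)\ge\hgt(x_0)$ already by Theorem \ref{thm:minimal_eq_minimum}, so no absolute value is needed.)

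For the reverse direction, I would argue by induction on $d:=\hgt(y)-\hgt(x_0)\ge 0$. If $d=0$, Theorem \ref{thm:minimal_eq_minimum} gives $y=x_0$ (using transitivity to pick some $w$ with $wx_0 = y$ and then applying the ``equality iff'' clause), so the trivial word $w=1$ works. If $d>0$, then $y\neq x_0$, so by the preceding corollary (at most one $W$-minimal element per orbit) $y$ is not $W$-minimal. Hence there exists $s\in S$ with $\hgt(sy)<\hgt(y)$, and by the scaled property $\hgt(sy)=\hgt(y)-1$. By the inductive hypothesis applied to $sy$, there is $w'\in W$ with $w'x_0 = sy$ and $\ell(w')=d-1$. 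Then $(sw')x_0 = y$ and $\ell(sw')\le d$, giving $\min_{wx_0=y}\ell(w)\le d$.

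There is essentially no obstacle here beyond correctly invoking Theorem \ref{thm:minimal_eq_minimum} in the base case and the unique-minimum corollary in the inductive step; the scaled hypothesis $|\hgt(sx) - \hgt(x)| \le 1$ does all the heavy lifting for the bound on height differences. The only mildly subtle point is remembering that transitivity is needed at all (to know that \emph{some} $w$ sends $x_0$ to $y$ in the first place, so the minimum is over a nonempty set), which is of course built into the hypothesis.
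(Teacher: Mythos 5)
Your proof is correct and follows essentially the same route as the paper's: the paper also notes that $y\ne x_0$ forces $y$ to be non-$W$-minimal, picks $s$ with $\hgt(sy)=\hgt(y)-1$, and inducts on height, leaving the easy telescoping inequality implicit. Your write-up just spells out the details the paper compresses.
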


\begin{proof}
We may assume $y\ne x_0$, and therefore $y$ cannot be $W$-minimal, so there
exists $s\in S$ such that $\hgt(sy)=\hgt(y)-1$.  The claim follows by
induction on the height of $y$.
\end{proof}

\begin{rem}
Note that the proof depended on quasiparabolicity only via the fact that
$x_0$ is the unique $W$-minimal element of $X$.
\end{rem}

In particular, a transitive quasiparabolic $W$-set with a minimal element
is uniquely determined (up to an overall shift in height) by the stabilizer
of that minimal element. More precisely, we have the following.

\begin{prop}\label{prop:qp_subgp_to_set}
Let $(X,\hgt)$ and $(X',\hgt')$ be transitive quasiparabolic $W$-sets with minimal
elements $x_0$, $x_0'$ respectively.  If $\Stab_W(x_0)=\Stab_W(x_0')$ and
$\hgt(x_0)=\hgt'(x_0')$, then $X$ and $X'$ are isomorphic scaled $W$-sets.
\end{prop}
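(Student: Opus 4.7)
The plan is to construct the isomorphism $\phi:X\to X'$ by the only possible formula, namely $\phi(wx_0)=wx_0'$, and then to check (a) well-definedness, (b) bijectivity, and (c) that $\phi$ preserves height. Well-definedness and bijectivity follow immediately from the hypothesis $\Stab_W(x_0)=\Stab_W(x_0')$ together with transitivity: if $wx_0=w'x_0$ then $w^{-1}w'\in\Stab_W(x_0)=\Stab_W(x_0')$, so $wx_0'=w'x_0'$, which makes $\phi$ a well-defined map of $W$-sets. The same argument with the roles of $X$ and $X'$ reversed provides a two-sided inverse, so $\phi$ is a $W$-equivariant bijection.

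The only substantive point is preservation of height, and for this the heavy lifting has already been done in Corollary \ref{cor:ht_from_stab}. That corollary shows that for $y=wx_0$,
\[
\hgt(y)=\hgt(x_0)+\min_{w'x_0=y}\ell(w'),
\]
and analogously for $\hgt'$ on $X'$. But the set $\{w'\in W\mid w'x_0=wx_0\}$ equals the coset $w\Stab_W(x_0)$, and likewise $\{w'\mid w'x_0'=wx_0'\}=w\Stab_W(x_0')$. Since the two stabilizers coincide by hypothesis, these two cosets of $W$ are identical, so the two minima over length are equal. Combined with $\hgt(x_0)=\hgt'(x_0')$, this yields $\hgt(\phi(y))=\hgt(y)$ for every $y\in X$.

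There is no real obstacle here: the Proposition is essentially a corollary of Corollary \ref{cor:ht_from_stab}, which reduced the height function to purely group-theoretic data (the stabilizer of the minimal element and its height). The only thing to be careful about is that transitivity of both $X$ and $X'$ is genuinely used — without it, one could not guarantee that every element of $X'$ is hit by $\phi$, nor that Corollary \ref{cor:ht_from_stab} applies to compute heights on $X'$.
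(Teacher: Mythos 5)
Your proof is correct and follows essentially the same route as the paper: the same map $\phi(wx_0)=wx_0'$, the same stabilizer argument for well-definedness and bijectivity, and the same appeal to Corollary \ref{cor:ht_from_stab} to reduce height preservation to the equality of the cosets $w\Stab_W(x_0)=w\Stab_W(x_0')$.
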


\begin{proof}
We construct the isomorphism $\phi:X\to X'$ as follows.  Let $y\in X$, and
let $g\in W$ be such that $y=gx_0$; then we take $\phi(gx_0)=gx_0'$.  To see that
this is well-defined, observe that for fixed $y$, $g$ is determined up to
right multiplication by an element of the stabilizer of $x_0$, and this does
not change $gx_0'$, as $x_0'$ has the same stabilizer.  It thus remains only to
show that this $W$-set isomorphism (since the analogous map $X'\to X$ is
clearly inverse to $\phi$) preserves height, which follows by the
observation
\[
\hgt(y) = \hgt(x_0)+\min_{wx_0=y}\{\ell(w)\}
       = \hgt'(x_0') + \min_{wx_0'=\phi(y)}\{\ell(w)\}
       = \hgt'(\phi(y)).
\]
\end{proof}

Given a subgroup $H\subset W$, we can construct a scaled $W$-set $W/H$ by
defining
\[
\hgt(wH) = \min_{v\in wH}\{\ell(v)\}.
\]
Note that $W/H$ has a $W$-minimal element, namely the trivial coset $H$ of
height $0$.

\begin{defn}
A subgroup $H\subset W$ is {\em quasiparabolic} if the scaled $W$-set $W/H$ is
quasiparabolic.
\end{defn}

We remark that our motivating examples can be restated as saying that both
standard parabolic subgroups and the diagonal subgroup of $W\times W$ are
quasiparabolic.

It follows from the previous corollary that if $X$ is a transitive
quasiparabolic $W$-set with minimal element $x_0$ of height $0$, then $X\cong
W/\Stab_W(x_0)$, and thus $\Stab_W(x_0)$ is a quasiparabolic subgroup of $W$.
Thus the quasiparabolic subgroups are precisely the stabilizers of
$W$-minimal elements of quasiparabolic $W$-sets.  (Note that via negation,
every statement about $W$-minimal elements applies mutatis mutandum to
$W$-maximal elements; e.g., the stabilizer of a $W$-maximal element is also
quasiparabolic.)

It is important to note that the property of being a quasiparabolic
subgroup is not invariant under conjugation.  Indeed, the conjugate of a
standard parabolic subgroup need not be quasiparabolic. 
\begin{eg}  For instance, the
scaled $W$-set associated to the conjugate parabolic subgroup $\langle
s_1s_2s_1\rangle$ of $S_3$ cannot be quasiparabolic, since it has two
distinct $W$-maximal elements.
\end{eg}

If the height function of a quasiparabolic $W$-set is bounded from below,
then each orbit has an element of minimum height, which is necessarily
$W$-minimal, so unique in the orbit.  In particular, the transitive
quasiparabolic $W$-sets with height bounded from below (e.g., if
$|X|<\infty$) are (up to isomorphism and shift of height) in one-to-one
correspondence with the quasiparabolic subgroups of $W$.  Many of our
proofs require an assumption of boundedness (either from above or below),
making this case particularly important.  However, there are also
interesting examples of unbounded quasiparabolic $W$-sets, so we have
attempted to avoid as much as possible the assumption of existence of a
$W$-minimal element.  Note also that if $X$ (transitive and quasiparabolic)
is infinite, then it cannot have {\em both} a $W$-minimal and a $W$-maximal
element.  Indeed, if $X$ (without loss of generality) has a $W$-minimal
element (i.e., $X=W/H$ up to shifting), then its height function is
unbounded from above, as there are only finitely many potential minimal
coset representatives of any fixed length.

\begin{prop}
Suppose the transitive scaled $W$-set $(X,\hgt)$ has a unique $W$-minimal element,
and the stabilizer of that element is a quasiparabolic subgroup.  Then $X$
is quasiparabolic.
\end{prop}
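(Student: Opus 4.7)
My plan is to identify $(X,\hgt)$ with a height-shifted copy of $W/H$, where $H := \Stab_W(x_0)$ is the given quasiparabolic subgroup, and then invoke the earlier observation that shifting the height of a quasiparabolic set preserves quasiparabolicity.

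First I would set up the natural $W$-equivariant map $\phi\colon W/H \to X$ given by $\phi(wH) = wx_0$, and verify it is a bijection: well-defined because $H$ fixes $x_0$, surjective by transitivity of $X$, and injective because $wx_0 = w'x_0$ is equivalent to $w^{-1}w' \in H$, i.e.\ to $wH = w'H$.

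Next I would establish the height compatibility
\[
\hgt(wx_0) = \hgt(x_0) + \hgt_{W/H}(wH)
\]
for all $w\in W$. The $\le$ direction follows immediately from the scaled $W$-set property: along any reduced expression $s_1\cdots s_k$ for a minimal length representative of $wH$, the height in $X$ can increase by at most one per step, giving $\hgt(s_1\cdots s_k x_0) \le \hgt(x_0)+k$. For the reverse inequality I would apply Corollary \ref{cor:ht_from_stab} directly to $X$: although that corollary is stated for quasiparabolic $W$-sets, the remark immediately following it explicitly notes that its proof uses quasiparabolicity only through the uniqueness of the $W$-minimal element of $X$, which is precisely what our hypothesis supplies. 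The resulting formula $\hgt(y) = \hgt(x_0) + \min_{wx_0=y}\ell(w)$, combined with the fact that $\{w\in W : wx_0=y\}$ is a left coset of $H$, identifies the minimum with $\hgt_{W/H}(\phi^{-1}(y))$ and yields the desired equality. Once height compatibility is in hand, $\phi$ is an isomorphism of scaled $W$-sets from $(W/H,\hgt(x_0)+\hgt_{W/H})$ to $(X,\hgt)$, so since $(W/H,\hgt_{W/H})$ is quasiparabolic by hypothesis on $H$, so is its height shift, and hence so is $(X,\hgt)$.

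The main point requiring care is the invocation of Corollary \ref{cor:ht_from_stab} under the weaker hypothesis: one must verify that the inductive descent argument used there genuinely requires only that $x_0$ be the unique $W$-minimal element (so that each non-minimal element admits a height-decreasing simple reflection, forcing the induction to terminate at $x_0$), rather than the full QP1/QP2 axioms. This is precisely the content of the remark following that corollary, and is the hinge on which the proof turns.
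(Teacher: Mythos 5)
Your proof is correct and takes essentially the same route as the paper's: both identify $(X,\hgt)$ with a height-shifted copy of $W/H$ via the formula $\hgt(y)=\hgt(x_0)+\min_{wx_0=y}\{\ell(w)\}$, justified by the observation (recorded in the remark following Corollary \ref{cor:ht_from_stab}) that the proof of that corollary uses quasiparabolicity only through the uniqueness of the $W$-minimal element. You simply make explicit the bijection $wH\mapsto wx_0$ and the two inequalities that the paper's terser proof leaves implicit.
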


\begin{proof}
Without loss of generality, we may assume that the minimal element $x_0\in X$
has height 0, and we need simply show that $X\cong W/H$ as scaled $W$-sets,
where $H$ is the stabilizer of the minimal element.  In other words, 
we need to show that for $y\in X$,
\[
\hgt(y)=\min_{wx_0=y}\{\ell(w)\};
\]
this follows from the proof of Corollary \ref{cor:ht_from_stab}.
\end{proof}

\begin{lem}\label{lem:odd_qp_has_simple}
Suppose the quasiparabolic subgroup $H\subset W$ contains an element of odd
length.  Then it contains a simple reflection.
\end{lem}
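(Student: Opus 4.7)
The plan is to argue by contradiction via the strong exchange condition. Suppose for contradiction that $H$ contains no simple reflection, and let $h \in H$ be of minimum odd length $k$. If $k = 1$ then $h$ itself is a simple reflection, so we must have $k \geq 3$. Fix a reduced expression $h = t_1 t_2 \cdots t_k$ and set $w := t_2 t_3 \cdots t_k$, which is reduced of length $k - 1$ as the tail of a reduced expression.

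I would then apply Corollary~\ref{cor:strong_exch} at the $W$-minimal element $x_0 = H \in W/H$, with reflection $r = t_1$ and word $w$. Since $rw = h \in H$, we get $rwx_0 = x_0$ and hence $\hgt(rwx_0) = 0$. Moreover $w \notin H$: if it were, then $t_1 = h w^{-1}$ would lie in $H$, giving a forbidden simple reflection. By $W$-minimality of $x_0$ (via Theorem~\ref{thm:minimal_eq_minimum}) this forces $\hgt(wx_0) > 0 = \hgt(rwx_0)$, so the strong exchange condition applies and produces an index $j \in \{2, \ldots, k\}$ for which
\[
g := t_2 \cdots \hat{t}_j \cdots t_k
\]
stabilizes $x_0$, i.e., $g \in H$.

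I would finish with a parity count on $g$. Since $g$ is exhibited as a product of $k - 2$ simple reflections, $\ell(g)$ shares the parity of $k - 2$, which is odd (as $k$ is odd); in particular $g \neq 1$ and $\ell(g) \geq 1$. If $\ell(g) = 1$ then $g$ itself is a simple reflection in $H$, contradicting our assumption; and if $\ell(g) \geq 3$ then $g \in H$ is an odd-length element with $\ell(g) \leq k - 2 < k$, contradicting the minimality of $k$. Either way we reach a contradiction, so $H$ must contain a simple reflection. The one delicate point is orienting the strong exchange data correctly: replacing $h$ by $w = t_1 h$ and taking $r = t_1$ is precisely what makes the inequality $\hgt(wx_0) > \hgt(rwx_0)$ drop out from the hypothesis that no simple reflection lies in $H$; once that is set up, the final parity argument is automatic.
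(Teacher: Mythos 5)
Your proof is correct, but it takes a genuinely different route from the paper's. You run a minimal-counterexample argument directly on the odd-length elements of $H$: writing a minimal odd-length $h=t_1\cdots t_k\in H$ as $t_1\cdot w$ and applying Corollary \ref{cor:strong_exch} to the pair $(r,w)=(t_1,\,t_2\cdots t_k)$ at $x_0=H$, where the hypothesis $\hgt(wx_0)>\hgt(t_1wx_0)=0$ holds precisely because $w\notin H$ (else $t_1\in H$), so deleting a letter yields $g\in H$ with $\ell(g)$ odd and $\ell(g)\le k-2$, contradicting minimality. All the ingredients (Theorem \ref{thm:minimal_eq_minimum}, Corollary \ref{cor:strong_exch}) are established before the lemma, so there is no circularity, and the parity bookkeeping is sound. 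The paper instead first walks the sequence of cosets $\hgt(H),\hgt(s_kH),\dots,\hgt(wH)=\hgt(H)$ to find, by parity, \emph{some} coset fixed by a simple reflection, and then shows via a dihedral $\langle s,s'\rangle$-orbit argument that the height-minimal such coset must be the trivial one. Your argument is shorter and more direct; what the paper's version buys is the stronger intermediate fact that the minimal coset fixed by a simple reflection is $H$ itself, and the dihedral-orbit descent technique it introduces here is reused later (e.g., in the proof of Theorem \ref{thm:natural_normal}).
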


\begin{proof}
Let $w=s_1\cdots s_k\in H$ have odd length, and consider the sequence
\[
\hgt(H),\ \hgt(s_k H),\ \hgt(s_{k-1}s_k H),\dots,\ \hgt(w H)=\hgt(H).
\]
Since $k$ is odd, we find that there exists some $j$ such that
\[
\hgt(s_j\cdots s_k H) = \hgt(s_{j+1}\cdots s_k H),
\]
and thus
\[
s_j\cdots s_k H = s_{j+1}\cdots s_k H,
\]
and so $W/H$ contains an element fixed by a simple reflection.  Let
$gH\in W/H$ be an element of minimum height among all those fixed by a
simple reflection $s$.  If $g=1$, we are done; otherwise, there exists
$s'\in S$ such that $\hgt(s' g H)<\hgt(g H)$.  Consider the orbit of $g H$
under the action of the standard parabolic subgroup $\langle
s,s'\rangle$.  The heights in this orbit are bounded both above and below,
so the orbit is finite, and the action of $s s'$ on the orbit has finite
order, say $k$.  Now, consider the sequence
\[
g H,\ s' g H,\ s s' g H,\ s' s s' g H,\dots,\ s'(s s')^{k-1} g H,\ (s s')^k
g H.
\]
By assumption, the last term in the sequence is $g H$, so the penultimate
term is $s g H = g H$.  Since the sequence has even length and the initial
and final terms have the same height, it follows that there must be an even
number of steps where 
the height stays the same, so at least one such step
which is not the last.  Each such step provides an element fixed by either
$s$ or $s'$; if fixed by $s'$, the element is certainly not $g H$, while if
fixed by $s$, it has the form $(s s')^l g H$ for $l<k$ so by minimality of
$k$ is again not $g H$.  In other words, the $\langle s,s'\rangle$-orbit
contains an element different from $g H$ which is also fixed by a simple
reflection; since $g H$ is $\langle s,s'\rangle$-maximal, this other
element has strictly smaller height, a contradiction.
\end{proof}
\section{Constructions}\label{sec:constrs}

Two trivial, but useful, examples of quasiparabolic subgroups of $W$ are
$W$ itself and its even subgroup $W^0$.  Less trivially, of our two
motivating examples above, the parabolic case $W_I$ has a natural
generalization, so to avoid duplication of effort, we prove it as a
corollary of that generalization.  For the second example, we have the
following proof.

\begin{thm}\label{thm:diag_is_qp}
Let $(W,S)$ be a Coxeter system.  The set $W$ together with the height
function $\hgt(w):=\ell(w)$ and the $W\times W$-action
\[
(w',w'')w = w' w (w'')^{-1},
\]
is a quasiparabolic $W\times W$-set.
\end{thm}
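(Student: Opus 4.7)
The plan is to check the three requirements in sequence: that $(W,\ell)$ is a scaled $W\times W$-set, that it satisfies (QP1), and that it satisfies (QP2). The first two are immediate. The generators of $W\times W$ are $(s,1)$ and $(1,s)$ for $s\in S$, which act as $w\mapsto sw$ and $w\mapsto ws$; since each changes $\ell(w)$ by exactly $\pm 1$, the scaled property holds. The reflections of $W\times W$ have the form $(r,1)$ or $(1,r)$ with $r\in R(W)$, and multiplication by any such $r$ on either side changes the parity of $\ell(w)$ (reflections are conjugate to simple reflections, hence have odd length). Thus the hypothesis $\hgt((r,1)w)=\hgt(w)$ or $\hgt((1,r)w)=\hgt(w)$ of (QP1) never holds, so (QP1) is vacuous.

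For (QP2) I plan to split into four cases according to whether the reflection and simple reflection lie in the same factor of $W\times W$ or in different factors, and to handle each by the same template: invoke Lemma~\ref{lem:useful_lemma} to pin all four relevant heights, then apply the classical Strong Exchange Condition for $W$ itself. Consider for instance the case where both sit in the left factor. The hypothesis reads $\ell(rw)>\ell(w)$ and $\ell(srw)<\ell(sw)$, so by Lemma~\ref{lem:useful_lemma} one has $\ell(sw)=\ell(w)+1$, meaning that any reduced expression $w=s_1\cdots s_k$ extends to a reduced expression $sw=s\cdot s_1\cdots s_k$. Since $srw=(srs)\cdot sw$ is shorter than $sw$ by one, Strong Exchange applied to the reflection $srs\in R(W)$ and this reduced expression yields $srw=s\cdot s_1\cdots\hat{s_i}\cdots s_k$ for some $i\in\{0,\dots,k\}$, where $i=0$ denotes removal of the leading $s$. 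If $i\ge 1$, then $rw=s\cdot srw=s_1\cdots\hat{s_i}\cdots s_k$ has length $k-1$, contradicting $\ell(rw)=k+1$; thus only $i=0$ survives, giving $srw=w$ and hence $rw=sw$, which is the required conclusion $(r,1)w=(s,1)w$.

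The three remaining cases follow the same template. For $(1,r)$ together with $(1,s)$ one uses the conjugate reflection $srs$ acting on a reduced expression $ws=s_1\cdots s_k\cdot s$, while in each mixed case ($(r,1)$ with $(1,s)$, or $(1,r)$ with $(s,1)$) one applies Strong Exchange directly to the reflection $r$ acting on whichever of $ws$ or $sw$ appears in the hypothesis. Each time, the pinning from Lemma~\ref{lem:useful_lemma} forces the removed letter to be the extreme $s$ rather than an interior $s_i$, and this translates back into the required equation $rw=ws$ or $wr=sw$. I expect the only real difficulty to be bookkeeping through the inverse in $(w',w'')w=w'w(w'')^{-1}$: each case requires a careful computation of the composite action (for example, $(1,s)(r,1)\cdot w=rws$) before invoking Strong Exchange. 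Beyond this, no tool is needed other than Lemma~\ref{lem:useful_lemma} and the classical Strong Exchange Condition for Coxeter groups.
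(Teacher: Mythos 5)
Your proposal is correct and follows essentially the same route as the paper: pin the four heights via Lemma \ref{lem:useful_lemma}, then apply the classical Strong Exchange Condition to a reduced word for $sw$ (or $ws$) and derive a length contradiction unless the deleted letter is the extremal $s$. The only difference is organizational: the paper collapses the four cases of (QP2) to a single one using the symmetry of $W\times W$ and the identity $rw = w(w^{-1}rw)$, whereas you run the same template four times.
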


\begin{proof}
Since $|\ell(sw)-\ell(w)|=|\ell(ws)-\ell(w)|=1$, $(W,\ell)$ is indeed a
scaled $W\times W$-set, and furthermore
\[
\hgt((w',w'')w)-\hgt(w)\equiv \ell((w',w''))\pmod{2}.
\]
In particular, property QP1 is vacuous.  For
property QP2, there would normally be four cases (depending on which
factor of $W\times W$ $s$ and $r$ belong to), but these reduce to two by
symmetry, and then to one via the observation that
\[
r w = w (w^{-1} r w).
\]
We thus reduce to showing that if $r\in R(W)$, $s\in S$, $w\in W$ are such
that
\[
\ell(wr)>\ell(w)\quad\text{and}\quad \ell(swr)<\ell(sw),
\]
then $sw=wr$.  Since $\ell(swr)<\ell(sw)$, the classical strong exchange
condition tells us that, taking $w=s_1s_2\cdots s_k$, either
$swr = w$ (and we are done), or
\[
swr =  s_1\cdots s_{l-1}s_{l+1}\cdots s_k.
\]
But then
\[
wr = s s_1\cdots s_{l-1}s_{l+1}\cdots s_k,
\]
so $\ell(wr)=k-1$, a contradiction.
\end{proof}

\begin{cor}
The diagonal subgroup $\Delta(W)\subset W\times W$ is quasiparabolic.
\end{cor}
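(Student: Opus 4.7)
The plan is to apply Theorem \ref{thm:diag_is_qp} and then invoke the general principle established earlier that stabilizers of $W$-minimal elements in transitive quasiparabolic $W$-sets are quasiparabolic subgroups. Concretely, I would view $W$ as a $W \times W$-set via the action $(w',w'') w = w' w (w'')^{-1}$, and identify $\Delta(W)$ with the stabilizer of $1 \in W$.

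First I would check three things about the $W \times W$-set $W$ with height $\ell$. Transitivity is immediate since $(w,1) \cdot 1 = w$ for any $w \in W$. The element $1$ is $W \times W$-minimal, since $\ell$ takes its global minimum there: for any simple reflection $s$ of either factor of $W \times W$, we have $\ell(s) = 1 = \ell(1) + 1$, so the height cannot decrease. The stabilizer is $\Stab_{W \times W}(1) = \{(w',w'') : w' (w'')^{-1} = 1\} = \Delta(W)$.

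Next I would apply Theorem \ref{thm:diag_is_qp}, which provides that $(W, \ell)$ is quasiparabolic as a $W \times W$-set. Combining this with the observation (made in the excerpt just after Proposition \ref{prop:qp_subgp_to_set}) that a transitive quasiparabolic $W$-set with a $W$-minimal element of height $0$ is isomorphic as a scaled $W$-set to $W/\Stab_W(x_0)$, we conclude $W \cong (W \times W)/\Delta(W)$ as scaled $W \times W$-sets, and therefore $\Delta(W)$ is quasiparabolic by definition.

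There is no real obstacle here: the work was already done in the theorem, and the corollary is essentially a repackaging via the correspondence between transitive quasiparabolic $W$-sets with a minimal element of height $0$ and quasiparabolic subgroups. The only small thing to verify is that $\Delta(W)$ is indeed the stabilizer, which is a one-line calculation.
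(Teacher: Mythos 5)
Your proof is correct and matches the paper's argument exactly: the paper also deduces the corollary by noting that $\Delta(W)$ is the stabilizer of the $W\times W$-minimal element $1\in W$ and invoking the correspondence between quasiparabolic subgroups and stabilizers of minimal elements of transitive quasiparabolic sets. You have merely spelled out the routine verifications (transitivity, minimality of $1$, identification of the stabilizer) that the paper leaves implicit.
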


\begin{proof}
Indeed, this is the stabilizer of the minimal element $1\in W$.
\end{proof}

One important, if mostly trivial, construction for quasiparabolic $W$-sets
is direct product.

\begin{prop}
Let $(W,S)$ and $(W',S')$ be Coxeter systems, let $(X,\hgt)$ be a
quasiparabolic $W$-set, and let $(X',\hgt')$ be a quasiparabolic $W'$-set.
The height function $(\hgt\times\hgt')(x,x'):=\hgt(x)+\hgt'(x')$ on $X\times
X'$ makes $X\times X'$ a quasiparabolic $W\times W'$-set.
\end{prop}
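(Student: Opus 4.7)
The plan is a straightforward case analysis on which factor the reflections and simple reflections belong to. First, I would verify the scaled property: the simple reflections of $W \times W'$ are exactly those of the form $(s,1)$ with $s \in S$ or $(1,s')$ with $s' \in S'$. Such an element changes only one coordinate, and the corresponding coordinate of the height function changes by at most $1$ by hypothesis on $X$ or $X'$, so $|\hgt\times\hgt'((s,1)(x,x'))-(\hgt\times\hgt')(x,x')|\le 1$, and likewise for $(1,s')$.

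Next, the reflections of $W\times W'$ are precisely the elements $(r,1)$ with $r\in R(W)$ and $(1,r')$ with $r'\in R(W')$. By the symmetry of swapping the two factors, it suffices to verify QP1 and QP2 when the reflection lies in $R(W)\times\{1\}$. For QP1, if $(\hgt\times\hgt')((r,1)(x,x'))=(\hgt\times\hgt')(x,x')$, then $\hgt(rx)+\hgt'(x')=\hgt(x)+\hgt'(x')$, whence $\hgt(rx)=\hgt(x)$. Applying QP1 for $X$ gives $rx=x$, hence $(r,1)(x,x')=(x,x')$.

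For QP2 there are two sub-cases depending on the factor of the simple reflection. If the simple reflection is $(s,1)$ with $s\in S$ (same factor), then the two hypotheses of QP2 reduce directly to $\hgt(rx)>\hgt(x)$ and $\hgt(srx)<\hgt(sx)$, and QP2 for $X$ yields $rx=sx$, hence $(r,1)(x,x')=(s,1)(x,x')$. The only slightly subtle case is the cross case, where the simple reflection is $(1,s')$ with $s'\in S'$. I claim this case is vacuous: the two hypotheses become
\[
\hgt(rx)+\hgt'(x')>\hgt(x)+\hgt'(x')\quad\text{and}\quad \hgt(rx)+\hgt'(s'x')<\hgt(x)+\hgt'(s'x'),
\]
which simplify to $\hgt(rx)>\hgt(x)$ and $\hgt(rx)<\hgt(x)$, a contradiction. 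Thus QP2 is vacuously satisfied, and the proof is complete.

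There is no real obstacle here — the point is simply that heights add and reflections act on only one factor, so the conditions decouple and the cross case of QP2 cannot arise. The only thing requiring (minimal) care is enumerating the cases and observing the cross-case vacuity, which is what makes the assertion work without needing any compatibility between the two Coxeter systems.
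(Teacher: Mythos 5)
Your proof is correct and follows essentially the same route as the paper: reduce to reflections lying in a single factor, observe that QP1 and the same-factor case of QP2 are inherited from $X$ (resp.\ $X'$), and note that in the cross case the height increment of $(r,1)$ is unaffected by $(1,s')$, so the two hypotheses of QP2 contradict each other and the case is vacuous. No issues.
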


\begin{proof}
We observe that any reflection in $W\times W'$ is either a reflection in
$W$ or a reflection in $W'$, and the restriction of $X\times X'$ to $W$ is
the disjoint union of $|X'|$ copies of $X$ (with appropriately shifted
heights).  Thus property QP1 is automatic, while property QP2 reduces
immediately to the case $r\in R(W)$, $s\in S'$.  But we find
\begin{align}
(\hgt\times\hgt')((r,1)(1,s)(x,x'))
-
(\hgt\times\hgt')((1,s)(x,x'))
&=
\hgt(rx)-\hgt(x)\notag\\
&=
(\hgt\times\hgt')((r,1)(x,x'))
-
(\hgt\times\hgt')((x,x')),
\end{align}
and thus property QP2 is vacuous in this case.
\end{proof}

If the scaled $W$-set $(X,\hgt)$ satisfies $\hgt(sx)\ne \hgt(x)$ for all
$s\in S$, $x\in X$, then the verification that $X$ is quasiparabolic is
noticeably simplified, as property QP1 is vacuous.  Indeed, under this
assumption,
\[
\hgt(wx)\equiv \hgt(x)+\ell(w) \pmod{2},\label{eq:even}
\]
for all $w\in W$, $x\in X$.  This motivates the following definition.

\begin{defn}
The scaled $W$-set $(X,\hgt)$ is {\em even} if for any pair $w\in W$,
$x\in X$ s.t. $wx=x$, one has $\ell(w)$ even.  Otherwise, we say that
$(X,\hgt)$ is {\em odd}.
\end{defn}

It turns out that associated to any scaled $W$-set $X$ is a canonically
defined even scaled $W\times A_1$-set $\evenX$ (the {\em even double cover}
of $X$) such that $X$ is quasiparabolic iff $\evenX$ is quasiparabolic;
thus in many proofs below, we can reduce our consideration to the even
case.

The scaled $W\times A_1$-set $\evenX$ is defined as follows.  As a set
$\evenX=X\times \F_2$, with $W$-action
\[
w(x,k) = (wx,k+\ell(w))
\]
and $A_1  = \langle s_0 \rangle$-action
\[
s_0(x,k)=(x,k+1).
\]
The height function on $\evenX$ is defined by
\[
\even\hgt(x,k) = \begin{cases}
\hgt(x) & \hgt(x)\equiv k\pmod{2}\\
\hgt(x)+1 & \hgt(x)\equiv k+1\pmod{2}.
\end{cases}
\]
We also let $\even x$ denote the preimage $(x,\hgt(x))\in \evenX$.
Note that $\even x$ is $A_1$-minimal whereas $s_0 \even x$ is $A_1$-maximal.

\begin{thm}
The pair $(\evenX,\even\hgt)$ is an even scaled $W\times A_1$-set, and is
quasiparabolic iff $(X,\hgt)$ is quasiparabolic.
\end{thm}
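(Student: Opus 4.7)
First I would observe that by construction $\even\hgt(x,k)$ is the smallest integer $\ge\hgt(x)$ congruent to $k\pmod 2$, so in particular $\even\hgt(x,k)\equiv k\pmod 2$ always.  A short case analysis on whether $\hgt(sx)$ equals $\hgt(x)\pm 1$ or $\hgt(x)$ (together with a trivial parity check for the $s_0$-action) then shows that each generator in $S\cup\{s_0\}$ changes $\even\hgt$ by exactly $\pm 1$, confirming that $(\evenX,\even\hgt)$ is a scaled $W\times A_1$-set.  Since $\even\hgt(\even w\cdot(x,k))-\even\hgt(x,k)\equiv\ell(\even w)\pmod 2$ for every $\even w\in W\times A_1$, any stabilizing element has even length, so $(\evenX,\even\hgt)$ is even; in particular QP1 for $\evenX$ is vacuous, because any reflection in $W\times A_1$ (having odd length) shifts $\even\hgt$ by an odd, hence nonzero, amount.

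For the forward direction, assume $(X,\hgt)$ is quasiparabolic; it then suffices to verify QP2 for $\evenX$.  The reflections of $W\times A_1$ are the $(r,1)$ with $r\in R(W)$ together with $(1,s_0)$, and the simple reflections are the $(s,1)$ with $s\in S$ together with $(1,s_0)$, giving four cases.  The three cases in which at least one of $\even r,\even s$ equals $(1,s_0)$ are handled using that $(1,s_0)$ commutes with every $(w,1)$: the two QP2 inequalities then force either $\hgt(sx)=\hgt(x)$ or $\hgt(rx)=\hgt(x)$, and QP1 for $X$ immediately yields the required equality $\even r\evenx=\even s\evenx$ (the doubled case $\even r=\even s=(1,s_0)$ being entirely trivial).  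The substantive case is $\even r=(r,1),\ \even s=(s,1)$; here I would apply Lemma \ref{lem:useful_lemma} inside $\evenX$ (already known to be a scaled $W\times A_1$-set) to pin down $\even\hgt$ at each of the four points $\evenx,\even r\evenx,\even s\evenx,\even s\even r\evenx$, which in turn confines the $X$-heights $\hgt(rx),\hgt(sx),\hgt(srx)$ to a short list of possibilities relative to $\hgt(x)$.  In the generic possibility one has $\hgt(rx)>\hgt(x)$ and $\hgt(srx)<\hgt(sx)$, so QP2 of $X$ delivers $rx=sx$ directly; the degenerate possibilities are ruled out by applying QP1 of $X$ to suitable conjugate reflections such as $srs$ acting on $sx$, which forces equalities incompatible with the assumed $X$-heights.

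For the backward direction, given a hypothetical counterexample to QP1 of $X$ --- say $\hgt(rx)=\hgt(x)$ with $rx\ne x$ --- I would choose $k\equiv\hgt(x)\pmod 2$ and take $\evenx=(x,k),\ \even r=(r,1),\ \even s=(1,s_0)$; a direct calculation gives $\even\hgt(\even r\evenx)=\hgt(x)+1>\hgt(x)=\even\hgt(\evenx)$ and $\even\hgt(\even s\even r\evenx)=\hgt(x)<\hgt(x)+1=\even\hgt(\even s\evenx)$, so QP2 in $\evenX$ forces $(rx,k+1)=(x,k+1)$, contradicting $rx\ne x$.  Counterexamples to QP2 of $X$ are lifted identically with $\even s=(s,1)$: the exact heights supplied by Lemma \ref{lem:useful_lemma} applied in $X$ let one verify the QP2 hypotheses in $\evenX$, and the resulting equality $\even r\evenx=\even s\evenx$ descends directly to $rx=sx$ in $X$.

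The main obstacle is the substantive case of the forward direction, where several degenerate sub-configurations of $X$-heights are a priori compatible with the QP2 hypotheses in $\evenX$; the essential trick for these is to apply QP1 of $X$ not only to $r$ itself but to its conjugate $srs$ acting on $sx$, which is what lets one extract a contradiction from each degenerate sub-configuration and reduce to the one in which QP2 of $X$ applies directly.
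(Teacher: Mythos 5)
Your proposal tracks the paper's proof closely: the same verification that $\evenX$ is even and scaled, the same observation that QP1 for $\evenX$ is vacuous by parity, the same four-case split for QP2 in the forward direction with the mixed $W$/$A_1$ cases dispatched by commutativity and parity, and the same lifting of QP1 and QP2 configurations in the backward direction. (Your use of Lemma \ref{lem:useful_lemma} to normalize the heights before lifting a QP2 configuration is a mild streamlining of the paper's two-case parity check at that point, and your backward QP1 argument is exactly the paper's.)

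The one step that does not survive scrutiny is your treatment of the substantive forward case $r\in R(W)$, $s\in S$: it is \emph{not} true that every degenerate sub-configuration of $X$-heights can be ruled out. Take $k\equiv\hgt(x)\pmod 2$ and $\hgt(rx)=\hgt(sx)=\hgt(x)$ (realized already by $r=s$ with $sx=x$). This is perfectly consistent with the hypotheses $\even\hgt(r(x,k))=\even\hgt(s(x,k))=\even\hgt((x,k))+1=\even\hgt(sr(x,k))+1$, since QP1 gives $rx=x$ and $sx=x$ and all four heights in $\evenX$ then come out right; no application of QP1 to $r$, $srs$, or anything else produces an incompatibility here. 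What saves the theorem is that in this sub-configuration the QP2 conclusion holds directly: $r(x,k)=(x,k+1)=s(x,k)$. This is precisely the sub-case the paper isolates and handles with its parity argument (first deducing $rx=x$ or $rx=sx$ from the weak inequalities, then, in the $rx=x$ branch, using the parities forced by the $\evenX$-hypotheses to conclude $\hgt(sx)=\hgt(x)$ and hence $sx=x=rx$). The repair to your argument is one line, but as written the claim that all degenerate possibilities are ``ruled out'' by contradiction is false, and an argument organized around deriving contradictions would get stuck there.
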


\begin{proof}
To see that $(\evenX,\even\hgt)$ is even and scaled, we need simply observe
that if $\hgt(sx)=\hgt(x)\pm 1$, then $\even\hgt(s(x,k))=\even\hgt(x,k)\pm 1$,
while if $\hgt(sx)=\hgt(x)$, then $\even\hgt(s(x,k))-\even\hgt(x,k)$ is $1$ or
$-1$ depending on whether or not $\hgt(x)\equiv k$ modulo 2, and similarly
for $s_0(x,k)$.

Now, suppose that $(X,\hgt)$ is quasiparabolic.  We must show that for all
$r\in R(W\times A_1)$, $s\in S\disj \{s_0\}$, $(x,k)\in \evenX$, if
$\even\hgt(r(x,k))>\even\hgt((x,k))$ and $\even\hgt(sr(x,k))<\even\hgt(s(x,k))$,
then $r(x,k)=s(x,k)$.  By parity considerations, $r(x,k)=(rx,k+1)$ and
$s(x,k)=(sx,k+1)$, so it remains only to show that $rx=sx$.  There are four
cases, depending on whether $r$ or $s$ are in $W$ or $A_1$.  If both are in
$W$, then we certainly have
\[
\hgt(rx)\ge \hgt(x)\quad\text{and}\quad\hgt(srx)\le \hgt(sx),
\]
since $|\even\hgt((x,k))-\hgt(x)|\le 1$.  But then by the quasiparabolicity of
$X$, we either have $rx=x$ or $rx=sx$.  In the latter case, we are done; in
the former, we have
\[
\even\hgt((x,k+1))>\even\hgt((x,k))\quad\text{and}\quad
\even\hgt((sx,k))<\even\hgt((sx,k+1)),
\]
so that $k \equiv \hgt(x)\pmod 2$ and $k\equiv \hgt(sx)\pmod 2$,
implying $\hgt(sx)=\hgt(x)$ and $sx=x=rx$ as required.
 Similarly, if both are in $A_1$, the verification is trivial. 
If $r=s_0$ (the unique reflection in $A_1$) and
$s\in S$, then $r$ commutes with $S$ and is a simple reflection, so this is
a special case of $r\in R(W)$, $s=s_0$.

In that remaining case, we have
\[
\even\hgt((rx,k+1))>\even\hgt((x,k))\quad\text{and}\quad
\even\hgt((rx,k))<\even\hgt((x,k+1)).
\]
The left- and right-hand sides of the two inequalities both differ by at
most 1, so the only way to have opposite inequalities is to have
\[
\even\hgt((x,k+1))=\even\hgt((rx,k+1))
=\even\hgt((x,k))+1=\even\hgt((rx,k))+1,
\]
implying $\hgt(rx)=\hgt(x)$, so $rx=x$ and $r(x,k)=(x,k+1)=s_0(x,k)$ as
required.

Conversely, suppose $(\evenX,\even\hgt)$ is quasiparabolic; we need to show
that $(X,\hgt)$ satisfies properties QP1 and QP2.

For QP1, suppose $r\in R(W)$, $x\in X$ such that $\hgt(rx)=\hgt(x)$, and
observe that
\[
\even\hgt(r\evenx)>\even\hgt(\evenx)
\quad\text{and}\quad
\even\hgt(s_0r\evenx)<\even\hgt(s_0\evenx)
\]
so $r\evenx=s_0\evenx$ and thus $rx=x$ as required.

For QP2, suppose $r\in R(W)$, $x\in X$, $s\in S$ such that $\hgt(rx)>\hgt(x)$
and $\hgt(srx)<\hgt(sx)$.  If $\hgt(rx)-\hgt(x)$ is odd, then
$r\evenx=\even{rx}$, so $\even\hgt(r\evenx)>\even\hgt(\evenx)$.  If
$\hgt(rx)-\hgt(x)$ is even, then $r\evenx=s_0\even{rx}$, but
$\hgt(rx)-\hgt(x)\ge 2$, so again $\even\hgt(r\evenx)>\even\hgt(\evenx)$.
Similarly, $\even\hgt(s r \evenx)<\even\hgt(s \evenx)$.  It follows that
$r\evenx=s\evenx$, so $rx=sx$ as required.
\end{proof}

\begin{rem}
Note that if $X$ is already even, then $\evenX\cong X\times A_1$, where
$A_1$ is the regular representation of $A_1$.
\end{rem}

\begin{cor}
If $H\subset W$ is quasiparabolic, then so is its even subgroup $H\cap
W^0$.
\end{cor}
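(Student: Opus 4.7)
The plan is to realise $W/H^0$ as a single $W$-orbit inside the even double cover $\widetilde{X}$ of $X = W/H$, and transfer quasiparabolicity through that identification, thereby avoiding any direct verification of (QP1) and (QP2) for $W/H^0$.

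First I would identify the orbit. Under the $W$-action on $\widetilde{X}$, the distinguished point $\widetilde{H} := (H,0)$ is sent by $w$ to $(wH, \ell(w) \bmod 2)$, so $\Stab_W(\widetilde{H}) = \{w \in H : \ell(w) \text{ even}\} = H \cap W^0 = H^0$. The orbit $W \cdot \widetilde{H}$ is therefore $W$-equivariantly in bijection with $W/H^0$ via $\phi: vH^0 \mapsto (vH, \ell(v)\bmod 2)$, and $\widetilde{H}$ is clearly $W$-minimal of height $0$ in this orbit.

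Next I would invoke the theorem just proved: since $X$ is quasiparabolic, $\widetilde{X}$ is a quasiparabolic $W \times A_1$-set. Applying the proposition on restriction to the parabolic subgroup $W \subset W \times A_1$ shows $\widetilde{X}$ is quasiparabolic as a $W$-set, and since (QP1) and (QP2) each involve only a single $x$ together with elements of its orbit, restricting to the single orbit $W \cdot \widetilde{H}$ preserves quasiparabolicity.

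Finally, I would check that $\phi$ is height-preserving, after which $W/H^0$ inherits quasiparabolicity directly. For this, Corollary \ref{cor:ht_from_stab} applied to the transitive quasiparabolic $W$-set $W \cdot \widetilde{H}$ gives
\[
\widetilde{\hgt}(g\widetilde{H}) = \min\{\ell(w) : w\widetilde{H} = g\widetilde{H}\}.
\]
Parametrising the solutions as $w = gh$ with $h \in H$ and $\ell(h)$ even (i.e., $h \in H^0$), this minimum becomes $\min\{\ell(u) : u \in gH^0\}$, which is precisely the defining height of $gH^0$ in $W/H^0$. The only even mildly subtle step is confirming that (QP1) and (QP2) are genuinely orbit-local, but this is immediate from their formulation, so I do not anticipate a real obstacle; the proof is essentially a bookkeeping exercise built on top of the double-cover theorem.
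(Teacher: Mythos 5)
Your proof is correct and follows the same route as the paper: the paper's own (one-line) argument is precisely that the minimal element $(x_0,\hgt(x_0))$ of the even double cover has stabilizer $H\cap W^0$, so that $H^0$ arises as the stabilizer of a $W$-minimal element of a quasiparabolic $W$-set. Your write-up simply makes explicit the bookkeeping (restriction to the parabolic $W\subset W\times A_1$, orbit-locality of (QP1)--(QP2), and the height identification via Corollary \ref{cor:ht_from_stab}) that the paper leaves implicit.
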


\begin{proof}
If $x_0\in W/H$ is the unique minimal element, then the minimal element
$\even x_0 =(x_0,\hgt(x_0))$ of the even double cover has stabilizer $H\cap W^0$.
\end{proof}

The following construction is extremely powerful.

\begin{thm}\label{thm:quotient}
Let $(X,\hgt)$ be a quasiparabolic $W\times W'$-set, and let $H\subset W'$
be a quasiparabolic subgroup such that every $H$-orbit in $X$ has height
bounded from below.  Let $X/H$ be the $W$-set of $H$-orbits in $X$, and
define $\hgt'(Hx)=\min_{y\in Hx}\{\hgt(y)\}$.  Then $(X/H,\hgt')$ is a
quasiparabolic $W$-set.
\end{thm}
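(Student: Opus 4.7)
The plan is to verify in turn that $(X/H,\hgt')$ is a scaled $W$-set and then satisfies properties (QP1) and (QP2) of Definition~\ref{defn:qp}. Throughout I will exploit that $W$ and $W'$ commute inside $W\times W'$, so every $r\in R(W)$ commutes with every $h\in H$ and in particular $rHx=Hrx$; boundedness below of each $H$-orbit guarantees that minima are attained, so each orbit has representatives realizing its height.

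The scaled condition is almost immediate: for $s\in S$ and an orbit $Hx$, pick $y\in Hx$ with $\hgt(y)=\hgt'(Hx)$; then $sy\in sHx$ with $|\hgt(sy)-\hgt(y)|\le 1$ gives $\hgt'(sHx)\le\hgt'(Hx)+1$, and the reverse inequality follows symmetrically by choosing a height-minimal element of $sHx$.

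For (QP1), I will assume $\hgt'(rHx)=\hgt'(Hx)=:k$ and pick $y\in Hx$ with $\hgt(y)=k$. If $\hgt(ry)=k$, then QP1 applied in the quasiparabolic set $X$ gives $ry=y$, so $rHx=Hx$. Otherwise $\hgt(ry)\ge k+1$, and some $h\in H$ realizes $\hgt(hry)=k$; QP1 in $X$ also rules out $\hgt(hy)=k$ (else $r$ would fix $hy$, forcing $ry=y$), so $\hgt(hy)>k$. Strong exchange (Theorem~\ref{thm:strong_exch}) applied in $X$ to the reflection $r$ and the word $h=s_1\cdots s_m\in W'$ then produces an index $i$ such that the $W'$-reflection
\[
r'' := s_m\cdots s_{i+1}s_is_{i+1}\cdots s_m
\]
satisfies $r''y=ry$. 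If $r''\in H$, then $ry\in Hy\subset Hx$ and hence $rHx=Hx$, as required. The remaining subcase $r''\notin H$ is where I expect the main obstacle to lie: the plan is to choose $h$ of minimum length in $H$ with $\hgt(hry)=k$, and then use that $r''\notin H$ implies $r''H\ne H$, whence QP1 for $W'/H$ gives $\hgt(r''H)>0$. Applying strong exchange inside the quasiparabolic $W'$-set $W'/H$ to the coset identity $(hr'')H=(s_1\cdots s_{i-1}s_{i+1}\cdots s_m)H$ should produce a strictly shorter element $\tilde h\in H$ still satisfying $\hgt(\tilde h\, ry)=k$, contradicting minimality of $\ell(h)$.

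For (QP2), given $r\in R(W)$ and $s\in S$ with $\hgt'(rHx)>\hgt'(Hx)$ and $\hgt'(srHx)<\hgt'(sHx)$, Lemma~\ref{lem:useful_lemma} applied to the scaled $W$-set $X/H$ (just verified) pins the heights of $Hx$, $sHx$, $rHx$, $srHx$ to be $k,k+1,k+1,k$ respectively. The goal is to produce $y\in Hx$ realizing $\hgt(y)=k$ \emph{and} $\hgt(sry)=k$ simultaneously; once such $y$ is in hand, scaling forces $\hgt(sy)=\hgt(ry)=k+1$ and QP2 applied in $X$ directly yields $ry=sy$, whence $rHx=sHx$. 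A ``doubly minimal'' $y$ of this form is obtained by an argument parallel to the (QP1) case: start from $y=h_0x$ with $h_0\in H$ of minimum length satisfying $\hgt(h_0\,srx)=k$ (so $\hgt(sry)=k$ automatically by commutation), and show that minimality combined with strong exchange in $X$ and quasiparabolicity of $W'/H$ forces $\hgt(h_0x)=k$ as well. The main obstacle throughout is the interplay between the two quasiparabolic structures: strong exchange in $X$ naturally produces reflections and elements of $W'$, while the conclusion requires them to lie in $H$, and the hypothesis that $H$ itself be quasiparabolic is precisely what lets one close this gap via strong exchange inside $W'/H$.
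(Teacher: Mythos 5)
Your verification that $X/H$ is a scaled $W$-set is correct and matches the paper. But both of the places you flag as ``the main obstacle'' are genuine gaps, and the mechanism you propose for closing them cannot work. Take QP1. In the subcase $r''\notin H$ you plan to contradict the minimality of $\ell(h)$ by applying strong exchange in $W'/H$ to the reflection $r''$ and the word $h=s_1\cdots s_m$. That application requires $\hgt(hr''H)<\hgt(hH)=\hgt(H)=0$, which is impossible since heights in $W'/H$ are nonnegative; so no shorter $\tilde h$ is produced. Worse, the subcase is not actually contradictory, so no amount of cleverness will refute it: let $W=\langle s\rangle\cong A_1$, $W'=\langle t_1\rangle\times\langle t_2\rangle$, $H=\langle t_1t_2\rangle$ (a diagonal, hence quasiparabolic), and $X=(W\times W')/K$ with $K=\langle st_1,t_2\rangle$ (a product of a diagonal with a parabolic, hence quasiparabolic). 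Then $X=\{K,sK\}$ with $t_1K=sK$ and $t_2K=K$, and $X/H$ is a single point. Running your argument at $y=K$, $r=s$ gives $\hgt(ry)=1>0$, the unique (hence minimal) $h\in H$ with $\hgt(hry)=0$ is $h=t_1t_2$, and strong exchange forces $i=1$, so $r''=t_2t_1t_2=t_1\notin H$. Here $rHx=Hx$ holds because $r''$ differs from an element of $H$ by an element of $\Stab(y)$ --- a phenomenon your argument does not address. The QP2 sketch has the same character: the assertion that minimality of $\ell(h_0)$ ``forces $\hgt(h_0x)=k$ as well'' is essentially the whole content of the theorem and is not proved.

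For contrast, the paper sidesteps QP1 entirely and organizes the induction differently. It first rewrites $X/H$ as $(X\times (W'/H))/\Delta(W')$, reducing to quotients by a diagonal subgroup, and then passes to the even double cover, after which the quotient is an even scaled set and QP1 is vacuous. For QP2 it does not try to produce a single doubly-minimal representative; instead it considers quadruples $(x,s,r,w)$ encoding a violating configuration together with an auxiliary element $w\in W'$ witnessing one of the minima, takes $\ell(w)$ minimal, and uses strong exchange in $X$ to manufacture a new violating quadruple with strictly shorter $w$ (the reflection produced by strong exchange is fed back in as the new $r$, which is why one needs $r$ to range over all of $R(W\times W'\times W')$ rather than just $R(W)$). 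If you want to salvage your direct approach, the missing ingredient is some statement of the form ``$r''$ agrees with an element of $H$ modulo the stabilizer of $y$,'' and it is not clear how to get that without an induction of the paper's type.
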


\begin{proof}
Since we can write $X/H$ as $(X\times (W'/H))/\Delta(W')$, we can reduce
to the case that $H$ is a diagonal subgroup.  That is, $(X,\hgt)$ is a
quasiparabolic $W\times W'\times W'$-set, and we wish to show that
$X/\Delta(W')$, with the induced height function, is quasiparabolic.
This construction also commutes with taking the even double-cover, so we
may assume that $X$ is even.

We first need to verify that $\hgt'$ is a height function.  To see this,
let $x$ be a minimal representative of a $\Delta(W')$ orbit, and observe
that
\[
\hgt'(\Delta(W')sx)\le \hgt(sx) \le \hgt(x)+1 = \hgt(\Delta(W')x)+1.
\]
By symmetry, one also has
\[
\hgt'(\Delta(W')x)\le \hgt'(\Delta(W')sx)+1,
\]
and thus $(X/\Delta(W'),\hgt')$ is indeed a scaled $W$-set.  Note
furthermore that as we have assumed $X$ even, and every element of
$\Delta(W')$ has even length, $(X/\Delta(W'),\hgt')$ is even.  As a result,
property QP1 is immediate, and it remains only to verify property QP2.

Consider the set of quadruples $(x,s,r,w)$, $x\in X$, $s\in S$, $r\in
R(W\times W'\times W')$, $w\in W'$ such that
\begin{itemize}
\item[1.] $\hgt'(\Delta(W')x) = \hgt(x)$, i.e., $x$ is minimal in its
  orbit;
\item[2.] $\hgt'(\Delta(W')srx) = \hgt((w,w)srx)$;
\item[3.]
$
\hgt'(\Delta(W')sx)
=
\hgt'(\Delta(W')rx)
=
\hgt'(\Delta(W')srx)+1
=
\hgt'(\Delta(W')x)+1;
$
\item[4.] and the orbits $\Delta(W')sx$, $\Delta(W')rx$ are distinct.
\end{itemize}
Note that by Lemma \ref{lem:useful_lemma}, to verify property QP2, it will suffice to show
that no configuration of orbits as in the third and fourth conditions
exists, with $r\in R(W)$.  Since any such configuration extends to a
quadruple as above (simply choose a minimal representative $x$ and an
appropriate minimizing $w$), if we can show that the full set quadruples is
empty, this will imply property QP2.

Consider a quadruple which is minimal with respect to $\ell(w)$.  We have
the following two inequalities:
\begin{align}
\hgt(rx)\ge \hgt'(\Delta(W')rx) &= \hgt'(\Delta(W')x)+1=\hgt(x)+1\\
\hgt((w,w)sx)\ge \hgt'(\Delta(W')sx) &=
\hgt'(\Delta(W')srx)+1=\hgt((w,w)srx)+1.
\end{align}
We may thus apply the strong exchange condition for $(X,\hgt)$ to conclude
that $(w,w)srx$ can be obtained from $(w,w)sx$ by omitting a simple
reflection from some reduced word.  Since by assumption, $(w,w)srx\ne
(w,w)x$ (since the two orbits are distinct), we conclude that there exists
$w'$ obtained from $w$ by omitting a simple reflection such that
\[
(w,w)srx\in \{(w,w')sx,(w',w)sx\}.
\]
Without loss of generality, $(w,w)srx=(w,w')sx$.  Now, consider the new
quadruple $(x',s,r',w')$, where
\[
x' = (w,w)srx
\qquad r' = (w'w^{-1},1)\in R(W'\times W').
\]
This new quadruple satisfies conditions 1 through 4, since one readily
verifies that the four $\Delta(W')$ orbits have simply been pairwise
swapped.  Since $\ell(w')<\ell(w)$, this is a contradiction.
\end{proof}

\begin{rem}
Note that any minimal element of $X$ maps to a minimal element of $X/H$.
Also, observe that if $X$ is a quasiparabolic $W\times W'$-set, and $Y$ is
a quasiparabolic $W'\times W''$-set, then we obtain a quasiparabolic
$W\times W''$-set
\[
X\times_{W'}Y := (X\times Y)/\Delta(W');
\]
we then have
\[
X/H = X\times_{W'} (W'/H).
\]
\end{rem}

\begin{cor}\label{cor:induce_from_para}
Let $(W,S)$ be a Coxeter group, and let $W_I$ be a 
parabolic
subgroup of $W$.  If $H\subset W_I$ is a quasiparabolic subgroup of $W_I$,
then it is also quasiparabolic as a subgroup of $W$.  In particular, $W_I$
and its even subgroup are quasiparabolic in $W$.
\end{cor}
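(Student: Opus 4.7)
The plan is to realize $W/H$ as a quotient in the sense of Theorem \ref{thm:quotient}, applied to $W$ viewed as a $W \times W_I$-set via restriction of the biregular action.

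First I would take $X = W$ with height $\hgt = \ell$, which is a quasiparabolic $W \times W$-set by Theorem \ref{thm:diag_is_qp}. Since $W \times W_I$ is a standard parabolic subgroup of $W \times W$ (with simple reflections $(s,1)$ for $s \in S$ and $(1,s)$ for $s \in S \cap W_I$), the restriction proposition (that quasiparabolicity descends to any parabolic subgroup) immediately gives that $(W,\ell)$ is a quasiparabolic $W \times W_I$-set.

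Next, since $H \subset W_I$ is quasiparabolic in $W_I$ by hypothesis, I would apply Theorem \ref{thm:quotient}: the $H$-orbits in $W$ under the second-factor action are precisely the right cosets $wH$ (as $(1,h)w = wh^{-1}$, and $H$ is a subgroup), so the orbit space $W/H$ is exactly the coset space, and the induced height $\hgt'(wH) = \min_{v \in wH} \ell(v)$ coincides with the height function used to define quasiparabolic subgroups. The boundedness-from-below hypothesis is trivially satisfied because $\ell$ takes values in $\Z_{\ge 0}$. Theorem \ref{thm:quotient} then yields that $W/H$ is a quasiparabolic $W$-set, i.e.\ that $H$ is quasiparabolic in $W$.

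For the ``in particular'' clause, I would invoke the already-noted trivial examples: $W_I$ itself and its even subgroup $W_I^0$ are quasiparabolic subgroups of $W_I$ (the former because $W_I/W_I$ is a single point, the latter by the observation at the start of Section \ref{sec:constrs}, or alternatively by the even-double-cover corollary applied inside $W_I$). Taking $H = W_I$ and $H = W_I^0$ in the main statement then gives the conclusion. There is no real obstacle here; the entire argument is a bookkeeping exercise that combines Theorem \ref{thm:diag_is_qp}, the parabolic-restriction proposition, and Theorem \ref{thm:quotient}, with the only point requiring any care being the verification that the heights match.
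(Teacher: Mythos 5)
Your proposal is correct and is essentially the paper's own argument: restrict the biregular quasiparabolic $W\times W$-set $(W,\ell)$ to the parabolic subgroup $W\times W_I$, apply Theorem \ref{thm:quotient} to the quasiparabolic subgroup $H\subset W_I$, and identify the resulting quotient with $W/H$; the ``in particular'' clause follows from the trivial examples $W_I$ and $W_I^0$ inside $W_I$. Your explicit checks of the height matching and boundedness are details the paper leaves implicit, but the route is the same.
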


\begin{proof}
The transitive quasiparabolic $W\times W$-set $(W,\ell)$ restricts to a
quasiparabolic $W\times W_I$-set, and thus induces by the theorem a
transitive quasiparabolic $W$-set $W/H$.  The stabilizer of the minimal
element of this new set is precisely $H$, as required.

The remaining claim follows from the fact that for any Coxeter group $W$,
both $W$ and its even subgroup are quasiparabolic subgroups.
\end{proof}

\begin{rems}
  It follows from this construction that any coset $wG\in W/G$ has a unique
  decomposition of the form $wG=uvG$ with $u\in W^I$ (i.e., $u\in W$ is (right)
  $W_I$-minimal) and $\hgt(wG)=\ell(u)+\hgt(vG)$.  Existence follows by
  taking $w$ to be a minimal $G$-coset representative and utilizing the
  standard decomposition $w=uv$ with $u\in W^I$, $v\in W_I$; one then has
\[
\ell(u)+\ell(v) = \ell(w) = \hgt(wG) \le \ell(u)+\hgt(vG)\le
\ell(u)+\ell(v)
\]
so $\hgt(wG)=\ell(u)+\hgt(vG)$ as required.  To see uniqueness, observe that
the above construction represents $W/G$ as $W\times_{W_I} W_I/G$, or in
other words as the quotient of the set of pairs $(u,vG)$ by the action
$w\cdot (u,vG)=(uw^{-1},wvG)$ of $W_I$.  In each such orbit, there is a
unique choice of $w$ such that $uw^{-1}\in W^I$, and thus a unique orbit
representative of the desired form.  It follows that the Poincar\'e series
(see Section \ref{sec:hecke} below) of $W/G$ can be written as
\[
\PS_{W/G}(q)=\PS_{W/W_I}(q)\PS_{W_I/G}(q)
            =\frac{\PS_W(q)\PS_{W_I/G}(q)}{\PS_{W_I}(q)}.
\]
Equivalently, we have (compare Theorem \ref{thm:Poincare_divides} below)
\[
\frac{\PS_W(q)}{\PS_{W/G}(q)} = \frac{\PS_{W_I}(q)}{\PS_{W_I/G}(q)}.
\]
\end{rems}

\begin{rems}
  Experimentally (i.e., in every finite case which we have checked), there
  appears to be a partial converse to this statement, to wit that if
  quasiparabolic $H$ is contained in a conjugate parabolic subgroup, then
  it is a quasiparabolic subgroup of some standard parabolic subgroup.
  More precisely, we conjecture that the intersection of all conjugate
  parabolic subgroups containing $H$ is standard parabolic, and $H$ is a
  quasiparabolic subgroup of the intersection.
\end{rems}

\begin{defn} A {\em Coxeter homomorphism} $\phi:W\to W'$ is a group
  homomorphism such that $\phi(S)\subset S'\cup \{1\}$.
\end{defn}

\begin{rem}
Note that if $\phi:W\to W'$ is a Coxeter homomorphism, then
\[
\phi(R(W)) \subset R(W')\cup \{1\}.
\]
\end{rem}

\begin{cor}
Let $\phi:W\to W'$ be a Coxeter homomorphism.  If $H\subset W$ is
quasiparabolic, then so is $\phi(H)$; if $H'\subset W'$ is quasiparabolic,
then so is $\phi^{-1}(H')$.
\end{cor}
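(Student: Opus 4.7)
The plan is to prove the second assertion first and then derive the first from it via Theorem~\ref{thm:quotient}.

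For the second assertion, I would view $W'/H'$ as a scaled $W$-set by pulling the action back through $\phi$ while keeping the height function $\hgt_{W'}$. Since $\phi(R(W))\subset R(W')\cup\{1\}$ and $\phi(S)\subset S'\cup\{1\}$, QP1 and QP2 can be checked by a short case analysis: when $\phi(r)$ and $\phi(s)$ are nontrivial, quasiparabolicity of $(W'/H',\hgt_{W'})$ as a $W'$-set gives the conclusion; when either is trivial, the relevant element acts as the identity, so QP1 is automatic and QP2 is vacuous (its strict height hypotheses cannot hold). The $W$-orbit $Y$ of $H'$ in $W'/H'$ is then a transitive quasiparabolic $W$-set in which $H'$, having the minimum height $0$, is $W$-minimal with stabilizer $\phi^{-1}(H')$. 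Hence $\phi^{-1}(H')$ is quasiparabolic.

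For the first assertion, the key observation is that $\mathrm{id}_{W'}\times\phi\colon W'\times W\to W'\times W'$ is a Coxeter homomorphism. Applying the second assertion (already proven) to this map and the quasiparabolic subgroup $\Delta(W')\subset W'\times W'$ yields that $H_0:=(\mathrm{id}_{W'}\times\phi)^{-1}(\Delta(W'))=\{(\phi(w),w):w\in W\}$ is quasiparabolic in $W'\times W$. I would then consider $(W',\ell_{W'})$ as a scaled $W'\times W$-set via $(w_1',w)\cdot v':=w_1' v'\phi(w)^{-1}$: this action is transitive, $1\in W'$ is its unique minimal element (for any $v'\neq 1$ some $s'\in S'$ satisfies $\ell_{W'}(s'v')<\ell_{W'}(v')$), and its stabilizer is exactly $H_0$. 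The proposition that a transitive scaled $W$-set with a unique minimal element and quasiparabolic stabilizer is itself quasiparabolic then gives that $(W',\ell_{W'})$ is a quasiparabolic $W'\times W$-set.

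Finally, I would apply Theorem~\ref{thm:quotient} to this quasiparabolic $W'\times W$-set together with the quasiparabolic subgroup $H\subset W$: the $H$-orbits are the left cosets $v'\phi(H)$, each of height bounded below by $0$, and the quotient is identified as a scaled $W'$-set with $(W'/\phi(H),\hgt_{W'})$. Theorem~\ref{thm:quotient} thus yields quasiparabolicity of $\phi(H)$. The main obstacle is the identification bookkeeping in the first assertion---matching the concrete action, height, and stabilizer on $(W',\ell_{W'})$ against those abstractly inherited from $(W'\times W)/H_0$, and checking that the $H$-quotient really reproduces the natural height on $W'/\phi(H)$.
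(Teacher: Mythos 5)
Your proof is correct and follows essentially the same route as the paper: the preimage statement is obtained by pulling the quasiparabolic structure on $W'/H'$ back along $\phi$, and the image statement by applying it to $(1,\phi):W'\times W\to W'\times W'$ to get the quasiparabolic subgroup $\Delta_\phi(W')=\{(\phi(w),w)\}$ and then invoking Theorem~\ref{thm:quotient}. Your quotient $\bigl((W'\times W)/\Delta_\phi(W')\bigr)/H$ is exactly the paper's $((W'\times W)/\Delta_\phi(W'))\times_W W/H$ under the identification $X/H=X\times_W(W'/H)$ given in the remark following that theorem, so the only difference is that you make the bookkeeping explicit.
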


\begin{proof}
If $H'\subset W'$ is quasiparabolic, then there is a height-preserving
bijection $W'/H'\cong W/\phi^{-1}(H')$, and Definition \ref{defn:qp} is
immediate.

For the other direction, we observe that
\[
(1,\phi):W'\times W\to W'\times W'
\]
is also a Coxeter homomorphism, and thus the subgroup
\[
\Delta_\phi(W'):=(1,\phi)^{-1}(\Delta(W'))\subset W'\times W
\]
is quasiparabolic.  But then the set
\[
((W'\times W)/\Delta_\phi(W')) \times_{W} W/H
\]
is quasiparabolic, and readily verified to be transitive
such that $\phi(H)$ is the stabilizer of the minimal element. 
\end{proof}

We have shown above that if $H$ is an odd quasiparabolic subgroup of $W$,
then it contains a simple reflection, and its even subgroup $H^0$ is
quasiparabolic.  To show the converse, namely that any subgroup of $W$
containing a simple reflection and with quasiparabolic even subgroup is
itself quasiparabolic, it suffices to construct a suitable action of $A_1$
on $W/H^0$ giving an isomorphism $W/H^0\cong \even{W/H}$ of scaled $W\times
A_1$-sets and making $W/H^0$ quasiparabolic as a $W\times A_1$-set.

In fact, we have the following generalization of this fact.

\begin{thm}\label{thm:extend_by_simples}
Let $H\subset W$ be an even quasiparabolic subgroup, and let $I$ be the set
of all simple reflections normalizing $H$.  Then the extension of $W/H$ to
a $W\times W_I$-set by
\[
(w,w')\cdot gH:= w g (w')^{-1} H
\]
preserves quasiparabolicity.  In particular, $H W_I$ is also a
quasiparabolic subgroup of $W$.
\end{thm}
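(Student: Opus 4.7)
The plan is to verify directly that $W/H$, with the prescribed $W\times W_I$-action, satisfies properties QP1 and QP2 of Definition~\ref{defn:qp}. The action is well-defined because each $s\in I$ normalizes $H$, hence so does $W_I$; it is scaled because each simple reflection of $W\times W_I$ (namely $(s,1)$ with $s\in S$ or $(1,s')$ with $s'\in I$) changes $\hgt$ by at most $1$. Since $H$ is even, every element of $gH$ has length congruent to $\ell(g)\pmod 2$, so $\hgt(gH)\equiv\ell(g)\pmod 2$, and $\hgt((w,w')\cdot gH)\equiv\hgt(gH)+\ell(w)+\ell(w')\pmod 2$. Reflections in $W\times W_I$ have odd length (being conjugates of simples), so they change $\hgt$ by an odd amount, and QP1 is vacuous.

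For QP2 I would split into four cases according to whether the reflection and simple lie in the $W$-factor or the $W_I$-factor. The case where both come from the $W$-factor reduces directly to QP2 for $W/H$ as a $W$-set. For the other three cases, the key observation is that right-multiplication by $w'\in W_I$ on $gH$ coincides with left-multiplication by the reflection $gw'g^{-1}\in R(W)$. This lets us rewrite the QP2 hypotheses entirely as conditions on left actions by reflections in $R(W)$ and simples in $S$, where one reflection is the conjugate $gw'g^{-1}$, which is generally not simple. We then apply the Strong Exchange Theorem~\ref{thm:strong_exch} to $W/H$ as a $W$-set, using an appropriate reduced word for the conjugated reflection, and deduce the required coset identity $r\cdot gH=s\cdot gH$ by tracking the exchanged letter and using parity/minimality arguments on coset representatives.

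The main obstacle is precisely the case analysis for configurations involving the $W_I$-right-action: because $gw'g^{-1}$ is not a simple reflection of $W$ in general, QP2 for $W/H$ as a $W$-set does not apply directly, and we must use the full generality of Strong Exchange. Once $W\times W_I$-quasiparabolicity of $W/H$ is established, the ``in particular'' claim follows by applying Theorem~\ref{thm:quotient} with the (trivially) quasiparabolic subgroup $W_I\subset W_I$: the $W_I$-orbits in $W/H$ partition each $HW_I$-coset into its $H$-cosets, so $(W/H)/W_I\cong W/HW_I$ is a quasiparabolic $W$-set whose minimal element has stabilizer $HW_I$, making $HW_I$ a quasiparabolic subgroup of $W$.
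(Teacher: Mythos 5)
Your outline follows essentially the same route as the paper's proof: evenness of $H$ renders QP1 vacuous; the four QP2 cases reduce, via the conjugation trick $gH\mapsto g(w')^{-1}H=(g(w')^{-1}g^{-1})\,gH$, to the pure left-action case (immediate from QP2 for $W/H$ as a $W$-set) plus the mixed case $r\in R(W)$, $s\in I$, which is settled by Strong Exchange; and the ``in particular'' claim is obtained exactly as in the paper by feeding the $W\times W_I$-set into Theorem \ref{thm:quotient}. The one place your sketch is thin is the completion of the mixed case, and your phrase ``an appropriate reduced word for the conjugated reflection'' points in a slightly wrong direction: the word to which one applies Corollary \ref{cor:strong_exch} is a reduced expression $s_1\cdots s_k s\,H$ for the \emph{element} $gsH$ of the $W$-set (with $g=s_1\cdots s_k$ a minimal representative of $gH$), not a word for the conjugated reflection. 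Strong Exchange then says $rgsH$ is obtained by deleting one letter of $s_1\cdots s_k s$; if the deleted letter is the trailing $s$ one gets $rgsH=gH$, i.e.\ $rgH=gsH$ as desired, and otherwise one uses $sHs=H$ a \emph{second} time to strip the trailing $s$ from both sides and exhibit a representative of $rgH$ of length at most $k-1$, contradicting $\hgt(rgH)=\hgt(gH)+1$. That second use of the normalization hypothesis is the genuinely non-obvious ingredient your ``parity/minimality arguments'' would need to supply; with it, your plan goes through and coincides with the paper's argument.
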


\begin{proof}
To see that $W/H$ is a {\em scaled} $W\times W_I$ set, we need to show that
$|\hgt(g s H)-\hgt(g H)|\le 1$ for $s\in I$.  If $g$ has minimum length in
its coset, then we find that
\[
\hgt(g s H)\le \ell(g s)\le \ell(g)+1\le \hgt(g H)+1,
\]
and thus the action of $(1,s)$ can never increase the height by more than
1; thus its inverse can never {\em decrease} the height by more than 1, and
since $s^2=1$, we are done.  Moreover, since $H$ is even, we in fact find
that $W/H$ is an {\em even} scaled $W\times W_I$ set.

It thus remains to show that property QP2 holds.
There are, as before, four cases to consider:
\begin{align}
\hgt(r g H)=\hgt(s g H)=\hgt(g H)+1=\hgt(s r g H)+1&\quad r\in R(W),s\in S\\
\hgt(g r H)=\hgt(s g H)=\hgt(g H)+1=\hgt(s g r H)+1&\quad r\in R(W_I),s\in S\\
\hgt(r g H)=\hgt(g s H)=\hgt(g H)+1=\hgt(r g s H)+1&\quad r\in R(W),s\in I\\
\hgt(g r H)=\hgt(g s H)=\hgt(g H)+1=\hgt(g r s H)+1&\quad r\in R(W_I),s\in I.
\end{align}
The second case is a special case of the first (replacing $r$ by $g r
g^{-1}$), and similarly the fourth is a special case of the third.
Moreover, the first case is just property QP2 as a $W$-set, so is immediate.

We are thus left with the case $r\in W$, $s\in I$.  Now, suppose $g$ is a
shortest element of $g H$, and choose a reduced word $g=s_1s_2\cdots s_k$.
Then
\[
\hgt(r s_1 s_2\cdots s_k s H)<\hgt(s_1 s_2\cdots s_k s H),
\]
so by Corollary \ref{cor:strong_exch}, either
\[
r s_1 s_2\cdots s_k s H = s_1 s_2\cdots s_k H
\]
(and we are done), or there exists $1\le l\le k$ such that
\[
r s_1 s_2\cdots s_k s H
=
s_1 s_2 s_{l-1} s_{l+1}\cdots s_k s H.
\]
But then
\[
r s_1 s_2\cdots s_k H
=
s_1 s_2 s_{l-1} s_{l+1}\cdots s_k H,
\]
since $s$ normalizes $H$, 
so that
\[
\hgt(r s_1 s_2\cdots s_k H)\le k-1=\hgt(g H)-1,
\]
a contradiction.

The final claim is a special case of the following corollary.
\end{proof}

\begin{cor}
If $H$, $I$ are as above, and $K$ is any quasiparabolic subgroup of $W_I$,
then $HK$ is a quasiparabolic subgroup of $W$.
\end{cor}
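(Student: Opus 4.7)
The plan is to apply the quotient construction of Theorem \ref{thm:quotient} to the quasiparabolic $W\times W_I$-set produced by Theorem \ref{thm:extend_by_simples}. By that theorem, the action $(w,w')\cdot gH = wg(w')^{-1}H$ makes $W/H$ a quasiparabolic $W\times W_I$-set, and by hypothesis $K\subset W_I$ is quasiparabolic. Thus I would form $(W/H)/K$ using the action of $K$ through the second factor. The hypothesis of Theorem \ref{thm:quotient} that every $K$-orbit have height bounded from below is immediate here, since heights in $W/H$ are already nonnegative. Hence $(W/H)/K$ is a quasiparabolic $W$-set.

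Next, I would identify this quotient with $W/(HK)$. Since $I$ consists of simple reflections normalizing $H$, every element of $W_I = \langle I\rangle$ normalizes $H$, and in particular $K$ does; so $HK = KH$ is a subgroup of $W$. The $K$-orbit of $gH$ under $(1,k)\cdot gH = gk^{-1}H$ is then exactly $gK^{-1}H = gKH = gHK$, and the orbit space is $W/(HK)$ as a $W$-set. The orbit of the identity coset $H$ is a $W$-minimal element with stabilizer $HK$, and the induced height function satisfies
\[
\min_{k\in K}\hgt(gk^{-1}H) = \min_{k\in K}\min_{h\in H}\ell(gk^{-1}h) = \min_{v\in gHK}\ell(v),
\]
which is precisely the standard height on $W/(HK)$. (Alternatively, one can just quote Proposition \ref{prop:qp_subgp_to_set}, since the minimal element of $(W/H)/K$ has the correct stabilizer and height.) Therefore $W/(HK)$ is quasiparabolic, i.e., $HK$ is a quasiparabolic subgroup of $W$.

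There is no serious obstacle: the argument is essentially a routine combination of the two immediately preceding results, and the only small point to watch is the observation that $W_I$ normalizes $H$, which is an immediate consequence of the definition of $I$.
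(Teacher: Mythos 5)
Your proposal is correct and is essentially the paper's own argument: the paper's entire proof is ``Consider the quasiparabolic $W$-set $W/H\times_{W_I} W_I/K$,'' which is exactly the quotient $(W/H)/K$ you form via Theorem \ref{thm:quotient} applied to the $W\times W_I$-set from Theorem \ref{thm:extend_by_simples}. You have simply made explicit the identification of this quotient with $W/(HK)$ (using that $W_I$ normalizes $H$) that the paper leaves to the reader.
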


\begin{proof}
Consider the quasiparabolic $W$-set $W/H\times_{W_I} W_I/K$.
\end{proof}

\begin{cor}
If the subgroup $H\subset W$ contains a simple reflection, then $H$ is
quasiparabolic iff $H\cap W^0$ is quasiparabolic.
\end{cor}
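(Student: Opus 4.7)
The forward direction, that quasiparabolicity of $H$ implies quasiparabolicity of $H \cap W^0$, has already been established as a corollary of the even double cover construction, so only the reverse direction requires work. My plan is to deduce it as an essentially immediate consequence of Theorem \ref{thm:extend_by_simples}.

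Assume $H^0 := H \cap W^0$ is quasiparabolic and that $H$ contains a simple reflection $s \in S$. Since $s$ has odd length, $s \notin H^0$, and since $H^0$ is the kernel of $\ell(\cdot) \bmod 2$ restricted to $H$ (a characteristic subgroup of $H$), it is in particular normalized by every element of $H$, including $s$. Thus $s$ lies in the set $I$ of simple reflections normalizing $H^0$ in the sense of Theorem \ref{thm:extend_by_simples}. Moreover, $H^0$ has index $2$ in $H$ (its complement consisting of the odd-length elements of $H$, a nonempty coset represented by $s$), so
\[
H = H^0 \cup H^0 s = H^0 \langle s \rangle = H^0 W_{\{s\}}.
\]

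Now I apply Theorem \ref{thm:extend_by_simples} to the even quasiparabolic subgroup $H^0$ with respect to the subset $\{s\} \subseteq I$: this tells us that $H^0 W_{\{s\}}$ is a quasiparabolic subgroup of $W$. Since $H^0 W_{\{s\}} = H$, this gives exactly what we want.

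There is no real obstacle here beyond identifying the correct instance of Theorem \ref{thm:extend_by_simples} to invoke; the only points to verify are that $H^0$ is genuinely a \emph{characteristic} subgroup of $H$ (so that $s \in H$ forces $s \in I$) and that the coset decomposition $H = H^0 \cup H^0 s$ really coincides with the subgroup $H^0 W_{\{s\}}$ produced by the theorem, both of which are immediate from $s \in H$ having odd length.
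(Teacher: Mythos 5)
Your proof is correct and follows the paper's intended argument: the corollary is stated there without proof precisely because, as in your write-up, the reverse direction is the instance of Theorem \ref{thm:extend_by_simples} (more precisely of its corollary, with $K=\langle s\rangle=W_{\{s\}}$, a parabolic and hence quasiparabolic subgroup of $W_I$) in which the even quasiparabolic subgroup $H^0$ is extended by the normalizing simple reflection $s\in H$, while the forward direction is the earlier even-double-cover corollary. The only cosmetic remark is that $H^0$ need not be \emph{characteristic} in $H$; what you actually use, and what holds, is that it is normal in $H$ as the kernel of the length-parity homomorphism restricted to $H$.
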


A given odd quasiparabolic subgroup $G \subset W$ can in general be obtained in more than
one way via the construction of Theorem \ref{thm:extend_by_simples}.  The
smaller the normal subgroup $H$ of $G$ being used in the construction, the more 
other odd subgroups it explains.
 We would thus in particular like to understand the
minimal such subgroup (if it exists).  There is, in fact, a natural
candidate for this minimal normal subgroup, namely the subgroup generated
by all elements that are forced to be in the stabilizer of the minimal
element by property QP2 alone.  Although we cannot as yet prove that this
subgroup is quasiparabolic, we can at least prove the following.

\begin{thm}\label{thm:natural_normal}
Let $H$ be a quasiparabolic subgroup of $W$, and let $N$ be the subgroup of
$H$ generated by all elements of the form $w^{-1}srw$ for $w\in W$, $s\in
S$, $r\in R(W)$ such that $\hgt(rwH)\ge \hgt(wH)$, $\hgt(srwH)\le
\hgt(swH)$.  Then $N$ is normal, and $H/N$ is generated by simple
reflections.
\end{thm}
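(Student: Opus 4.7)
\emph{Normality.} For $v\in H$ and any generator $g = w^{-1}srw$ of $N$, I compute $v^{-1}gv = (wv)^{-1}sr(wv)$. The defining hypotheses on $(w,s,r)$ involve only the cosets $wH$, $rwH$, $swH$, $srwH$, all unchanged when $w$ is replaced by $wv$ (since $vH = H$). Hence $v^{-1}gv$ is again a generator of $N$, so $N \trianglelefteq H$.

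\emph{Generation of $H/N$ by simple reflections.} Let $K = N\cdot\langle S\cap H\rangle$, a subgroup of $H$ by the normality just proved. I plan to show $K = H$ by induction on $\ell(h)$ for $h\in H$; the base case $h = 1$ is trivial. For $h \ne 1$, fix a reduced expression $h = s_1\cdots s_k$ and consider the cosets $y_j = s_j\cdots s_k H$ for $j = 1,\ldots,k+1$, with $y_1 = y_{k+1} = H$ and all $\hgt(y_j)\ge 0$. If some intermediate $y_j$ equals $H$, then $s_1\cdots s_{j-1}$ and $s_j\cdots s_k$ both lie in $H$ with strictly smaller word length, and the inductive hypothesis together with the subgroup property of $K$ yields $h\in K$.

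Otherwise set $M := \max_j \hgt(y_j) > 0$, and let $[j_1,j_2]$ be a maximal block of indices attaining $M$. Repeated application of QP1 forces $y_j = vH$ for $j\in [j_1,j_2]$, where $v = s_{j_2}\cdots s_k$, and $\hgt(y_{j_1-1}) = \hgt(y_{j_2+1}) = M-1$. The plan is to apply QP2 to the configuration $x = y_{j_2+1}$, $r = s_{j_2}$, $s = s_{j_1-1}$: here $\hgt(rx) = M > \hgt(x)$ and $\hgt(srx) = M-1$, so QP2 kicks in as soon as $\hgt(sx) = M$, yielding the generator $w^{-1}s_{j_1-1}s_{j_2}w\in N$ with $w = s_{j_2+1}\cdots s_k$. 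Substituting the resulting identity $s_{j_1-1}s_{j_2}\cdots s_k \equiv s_{j_2+1}\cdots s_k \pmod{N}$ into $h$, and absorbing each plateau step $s_j$ ($j_1\le j<j_2$) into its QP1-forced reflection fixing $vH$, should rewrite $h$ as $n\cdot h'$ with $n\in N$ and $\ell(h') < k$, completing the induction.

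\emph{Main obstacle.} The delicate case is when $\hgt(s_{j_1-1}y_{j_2+1}) < M$, so that the naive QP2 at the peak fails. My plan is to first reduce to the even case via the double cover $\evenX$, which eliminates QP1-triggered plateaus and makes every peak sharp; and then, when the naive QP2 still fails in the even case, to apply Theorem \ref{thm:strong_exch} to the reflection $v^{-1}s_{j_1-1}v$ to replace $h$ modulo $N$ by an equivalent word whose trajectory has strictly smaller maximum height, and conclude by a secondary induction on $M$ (or on the lexicographic pair $(\ell(h),M)$). A parallel subtlety is verifying that the plateau reflections $v^{-1}s_j v$ themselves lie in $K$; I expect this to follow from the same machinery applied to the shorter words they represent.
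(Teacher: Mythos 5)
Your normality argument is correct and coincides with the paper's. The generation argument, however, has a gap that your own plan does not close, namely the ``plateau reflections'' $v^{-1}s_jv$. When a simple reflection $s_j$ fixes a coset $vH$ of positive height $M$, the resulting element of $H$ is represented by a word of length up to $2\ell(v)+1$ --- generally \emph{longer} than the word $h$ you started from --- and its height trajectory still attains the same maximum $M$. So neither your primary induction on $\ell(h)$ nor your proposed secondary induction on $M$ (nor the lexicographic pair) applies to these elements; ``the same machinery applied to the shorter words they represent'' is unavailable because those words are not shorter. These elements are precisely the crux of the theorem (they are what produce the simple-reflection generators of $H/N$), and the paper devotes a separate argument to them: it first factors $h$ into subwords each containing at most one plateau step (by inserting round trips $t_{j1}\cdots t_{jn}t_{jn}\cdots t_{j1}$ through a reduced representative of each plateau coset), disposes of the plateau-free subwords by a strong-exchange/length induction (essentially your ``delicate case'' fix, which does work and simply shortens the word by $2$, so no induction on $M$ is needed there), and then handles a single plateau at height $M>0$ by passing to the minimal element of the $\langle t,u\rangle$-orbit of the plateau coset for a suitable second simple reflection $u$, rewriting the plateau modulo $N$ as a conjugated plateau at strictly smaller height; that induction on plateau height terminates at height $0$, where the plateau reflection is literally a simple reflection lying in $H$. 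None of this descent appears in your proposal.

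Your fallback of passing to the even double cover also does not work as stated: the stabilizer of the minimal element of $\evenX$ contains no simple reflections of $W\times A_1$ (an element $s\in S\cap H$ lifts to $(s,s_0)$, which is a product of two commuting reflections, not a reflection), so the conclusion ``generated by simple reflections'' degenerates there; moreover the subgroup $\tilde N$ defined by the cover's height function and reflection set would have to be related back to $N$ and to $S\cap H$, and you give no such comparison. In short, the skeleton (normality, strong exchange producing generators of $N$, induction on word length) is right and overlaps with the paper's plateau-free case, but the theorem's essential content --- the descent of a height-$M$ plateau to a simple reflection at height $0$ --- is missing.
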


\begin{proof}
Since $H$ is quasiparabolic, the conditions on $w$, $s$, $r$ force
$rwH=swH$ and thus $w^{-1}srw\in H$ as required.  Moreover, for any $h\in
H$, $(wh,s,r)$ also satisfies the conditions; it follows that $N$ is indeed
a normal subgroup of $H$.

Now, let $s_1s_2\dots s_k=h$ be an arbitrary reduced word multiplying to an
element of $H$, and consider the sequence
\[
0=\hgt(H),\ \hgt(s_k H),\ \hgt(s_{k-1}s_k H),\dots,\ \hgt(s_1s_2\cdots s_k H)=0
\]
of heights.  For each $j$ such that
\[
\hgt(s_js_{j+1}\cdots s_k H)=\hgt(s_{j+1}\cdots s_k H),
\]
choose a reduced expression
\[
t_{j1}t_{j2}\cdots t_{jn} H=s_js_{j+1}\cdots s_k H=s_{j+1}\cdots s_k H,
\]
and extend the given word by inserting
\[
t_{j1}t_{j2}\cdots t_{jn} t_{jn}\cdots t_{j2}t_{j1}
\]
before and after $s_j$.  If we then break the word between each pair
$t_{jn}t_{jn}$, we obtain a factorization of $h$ as a product of words each
having at most one step in which the height does not change.

Let $s_1s_2\cdots s_k$ thus be such a word, and suppose first that at no
step does the height remain unchanged.  We claim that, in that case,
$s_1s_2\cdots s_k\in N$.  Suppose otherwise, and choose a counterexample
of minimum length.  Since the height must increase at the first step, and
eventually decreases back to 0, there exists $j$ such that
\[
\hgt(s_js_{j+1}\cdots s_k H)=\hgt(s_{j+1}\cdots s_k H)-1.
\]
Choose the largest such $j$.  By Corollary \ref{cor:strong_exch}, we find
\[
s_j s_{j+1}\cdots s_k H = s_{j+1}\cdots s_{l-1} s_{l+1}\cdots s_k H
\]
for some $l$, and by the proof of that corollary, it follows that
\[
(s_{l+1}\cdots s_k)^{-1} s_l\cdots s_j\cdots s_{l-1} (s_{l+1}\cdots s_k)
\in
N.
\]
But then
\[
s_1s_2\cdots s_{j-1}s_{j+1}\cdots s_{l-1}s_{l+1}\cdots s_k
\in
h N
\]
would be a shorter counterexample.

Similarly, let $s_1s_2\cdots s_k=h\in H$ be a word such that at
precisely one step the height remains the same (necessarily of odd length,
by parity considerations); we claim that
\[
h\in s N
\]
for some simple reflection $s$.  Decompose the word for $h$ as $h=vtw$,
$v,w\in W$, $t\in S$, such that $twH=wH$.  If $\hgt(wH)=0$, then $t\in H$
and $v,w\in N$, so we are done. Otherwise, there exists a simple reflection
$u\in S$ such that $\hgt(uwH)=\hgt(wH)-1$.  Consider the $\langle
t,u\rangle$-orbit generated by $wH$.  By quasiparabolicity, this orbit has
a unique minimal element $w'H$, such that $t'w'H=w'H$,
$\hgt(u'w'H)>\hgt(w'H)$, with $\{t',u'\}=\{t,u\}$ (which depends
on the parity of $\hgt(wH)-\hgt(w'H)$).  We can freely replace $v^{-1}$ and
$w$ by any words in the same coset such that the height changes at each
step, and may therefore assume that
\[
v^{-1}=w =
\begin{cases}
(t'u')^{(\hgt(w H)-\hgt(H))/2} w' & \hgt(wH)\equiv \hgt(H)\pmod{2}\\
u' (t'u')^{(\hgt(w H)-\hgt(H)-1)/2} w' & \hgt(wH)\equiv \hgt(H)+1\pmod{2}.
\end{cases}
\]
But then
\[
v t w = (w')^{-1} u' (t' u')^{\hgt(wH)-\hgt(H)-1} w'.
\]
Since
\[
\hgt(u' w' H)=\hgt(u' (t' u')^{\hgt(wH)-\hgt(H)} w' H)
=
\hgt(w' H)+1=\hgt((t' u')^{\hgt(wH)-\hgt(H)} w' H)+1,
\]
we find
\[
(w')^{-1} (t' u')^{\hgt(wH)-\hgt(H)} w'\in N,
\]
so that
\[
v t w\in (w')^{-1} t' w' N,
\]
giving us the desired result by induction (since $\hgt(w' H)<\hgt(wH)$).
\end{proof}

\begin{conj}\label{conj:nice_normal_is_qp}
For any quasiparabolic subgroup $H\subset W$, the normal subgroup of
Theorem \ref{thm:natural_normal} is also quasiparabolic.
\end{conj}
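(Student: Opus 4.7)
The plan is to verify directly that $(W/N,\hgt)$ is a quasiparabolic $W$-set by establishing properties QP1 and QP2. First, every defining generator $w^{-1}srw$ of $N$ has length congruent to $\ell(s)+\ell(r)\equiv 0\pmod 2$, so $N\subseteq W^0$, and $(W/N,\hgt)$ is an even scaled $W$-set. Property QP1 is then immediate from parity: if $\hgt(rxN)=\hgt(xN)$ for a reflection $r$, evenness of $N$ forces the two cosets to coincide.

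For QP2, suppose $r\in R(W)$, $s\in S$, $g\in W$ satisfy $\hgt(rgN)>\hgt(gN)$ and $\hgt(srgN)<\hgt(sgN)$; the goal is to show $g^{-1}srg\in N$. I would compare this configuration with its projection to $W/H$ under the natural map $W/N\to W/H$. Since $N\subseteq H$, one has $\hgt(yH)\le \hgt(yN)$ for every $y\in W$, but the $W/N$ inequalities do not automatically translate into strict $W/H$ inequalities of the same form. I would proceed by case analysis on the pattern of equalities among the four heights $\hgt(gH),\hgt(sgH),\hgt(rgH),\hgt(srgH)$. The cleanest case is when the configuration in $W/N$ descends to weak inequalities $\hgt(rgH)\ge \hgt(gH)$ and $\hgt(srgH)\le \hgt(sgH)$ in $W/H$; then the triple $(g,s,r)$ is precisely one of the defining data of $N$, and $g^{-1}srg\in N$ by definition. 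When one of these $W/H$ inequalities collapses to equality, QP1 for $W/H$ instead yields $g^{-1}rg\in H$ (or $g^{-1}sg\in H$), which must then be reinterpreted to lie in $N$.

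The main obstacle is precisely this collapse case, where a QP2 configuration visible in $W/N$ has no direct counterpart in $W/H$ and must be explained by motion within the fiber of $W/N\to W/H$. Since Theorem \ref{thm:natural_normal} tells us $H/N$ is generated by (images of) simple reflections, I would attempt an induction on the height gap $\hgt(gN)-\hgt(gH)$, reducing this gap by right-multiplying $g$ by a simple reflection in $W$ whose image in $H/N$ acts nontrivially on the fiber; each such adjustment should contribute a factor verifiably lying in $N$, so that the accumulated product witnesses $g^{-1}srg\in N$. The delicate bookkeeping required to ensure the accumulated product lies in $N$ rather than merely in $H$ is where I expect the argument to become genuinely difficult, and is presumably why the authors left this as a conjecture. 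A potentially cleaner route would be to first establish an intrinsic alternative characterization of $N$ (for instance as the subgroup of $H$ generated by all forced identifications along rewrites of reduced expressions in $W/H$), since the current generating description mixes group-theoretic and order-theoretic data in a way that makes the inductive accumulation opaque.
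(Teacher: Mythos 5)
This statement is Conjecture~\ref{conj:nice_normal_is_qp}: the authors explicitly state that they cannot prove it (``Although we cannot as yet prove that this subgroup is quasiparabolic\dots''), and they verify it only by computer in various finite cases, with the $D_{2n}$ family of perfect involutions still open. So there is no proof in the paper to compare against, and your proposal should be judged as an attack on an open problem. To your credit, you recognize this and flag where your argument stalls; but the stall point is a genuine gap, not mere bookkeeping. The preliminary observations are fine: every generator $w^{-1}srw$ of $N$ has even length, so $N\subseteq W^0$, the cosets in $W/N$ have constant length parity, and QP1 for $(W/N,\hgt)$ is vacuous. The problem is entirely in QP2, and your proposed reduction to $W/H$ does not close it.

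Concretely, two things go wrong. First, a QP2 configuration in $W/N$ (that is, $\hgt(rgN)>\hgt(gN)$ and $\hgt(srgN)<\hgt(sgN)$) need not descend to the weak inequalities $\hgt(rgH)\ge\hgt(gH)$, $\hgt(srgH)\le\hgt(sgH)$ that define the generators of $N$: the only a priori relation between the two height functions is $\hgt(yH)\le\hgt(yN)$, and the minimal representative of $rgH$ may lie in a different fiber of $W/N\to W/H$ than $rgN$, so the relative order of heights can flip. In that case $(g,s,r)$ is simply not a defining triple, and nothing forces $g^{-1}srg$ into $N$ rather than merely into $H$. Second, your proposed repair --- inducting on $\hgt(gN)-\hgt(gH)$ by right-multiplying $g$ by simple reflections of $W$ lying in $H$ so as to move within the fiber $H/N$ --- presupposes that these right multiplications act on $(W/N,\hgt)$ as height-controlled operators compatible with the left $W$-action. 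That is precisely the content of Theorem~\ref{thm:extend_by_simples} applied with $N$ in the role of the even subgroup, and that theorem takes as hypothesis that $N$ is already quasiparabolic. As stated, the induction is therefore circular: controlling the fiber motion requires the conclusion you are trying to prove. Your closing suggestion --- to first find an intrinsic, purely order-theoretic characterization of $N$ --- is probably the right instinct, but it is the missing idea, not a step you have supplied.
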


\section{Perfect involutions}\label{sec:inv}
One of our original motivating examples of a quasiparabolic $W$-set is the
set of fixed-point free involutions in $S_{2n}$, with height function
$(\ell(\iota)-n)/2$.  This generalizes considerably, to Theorem
\ref{thm:fpf} below.  We first need to introduce some notation.

Let $W$ be a Coxeter group, and let $W^+$ be the semidirect product of $W$
by the group of permutations of $S$ that induce Coxeter automorphisms of
$W$; this inherits a length function from $W$ by taking the length of such
a Coxeter automorphism to be 0; equivalently, $W^+$ acts on the set of
roots of $W$, and the length function counts (as usual) the number of
positive roots taken to negative roots by the given element of $W^+$.

\begin{lem}
For any Coxeter group $W$, $(W^+,\ell)$ is a quasi-parabolic $W\times
W$-set.
\end{lem}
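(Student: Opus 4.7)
The plan is to reduce to Theorem \ref{thm:diag_is_qp} by decomposing $W^+$ as a disjoint union of $W\times W$-stable pieces, one for each Coxeter automorphism in the complement $\Gamma$ of $W$ in $W^+$, and observing that each piece is isomorphic as a scaled $W\times W$-set to the standard regular representation of Theorem \ref{thm:diag_is_qp} after twisting by an automorphism of the acting group.

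Concretely, write $W^+=W\rtimes \Gamma$ and decompose $W^+=\bigsqcup_{\gamma\in\Gamma} W\gamma$. Each coset $W\gamma$ is stable under both left and right multiplication by $W$, so it is a $W\times W$-subset of $W^+$. Since QP1, QP2, and the scaled condition all involve only a single element and its $W\times W$-translates, it suffices to verify them on each piece $W\gamma$ separately. The bijection $W\to W\gamma$ sending $w\mapsto w\gamma$ preserves height, because $\gamma$, being a Coxeter automorphism, permutes the positive roots, so $\ell(w\gamma)=\ell(w)$.

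Pulling the $W\times W$-action on $W\gamma$ back along this bijection produces the twisted action on $W$ given by $(a,b)\cdot w = a w c_\gamma(b)^{-1}$, where $c_\gamma(b):=\gamma b \gamma^{-1}$. Since $c_\gamma$ is a Coxeter automorphism of $W$, the map $(a,b)\mapsto(a,c_\gamma(b))$ is a Coxeter automorphism of $W\times W$, permuting both the simple reflections and the full set of reflections of the product. Properties QP1 and QP2 and the scaled condition refer only to these sets, so the twisted action on $W$ is quasiparabolic if and only if the standard regular action is; the latter is exactly Theorem \ref{thm:diag_is_qp}.

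The only real work is routine bookkeeping: verifying the semidirect-product formula that gives the pulled-back action, and recording that $\ell(w\gamma)=\ell(w)$ and that $\gamma$ permutes $S$ and $R(W)$. The main conceptual point, and the only place where one might worry, is that a Coxeter automorphism of the acting group $W\times W$ cannot disturb the quasiparabolic structure of the module, because the quasiparabolic axioms are stated purely in terms of the sets $S$ and $R$.
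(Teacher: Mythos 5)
Your proof is correct, and it reaches the conclusion by a genuinely different (and somewhat more self-contained) route than the paper. The decomposition itself is the same in both arguments --- each coset $W\gamma$ is a single $W\times W$-orbit --- but you establish quasiparabolicity of each piece by pulling the action back along $w\mapsto w\gamma$ and observing that the result is the standard two-sided regular action of Theorem \ref{thm:diag_is_qp} precomposed with the Coxeter automorphism $(a,b)\mapsto(a,c_\gamma(b))$ of $W\times W$; since QP1, QP2, and the scaled condition quantify only over $R(W\times W)$ and $S$, which such an automorphism permutes, quasiparabolicity transfers. The paper instead invokes its general machinery: it notes that each orbit has a unique minimal element, namely the Coxeter automorphism $\gamma$ itself, whose stabilizer is $\Delta_\gamma(W)=(1,c_\gamma)^{-1}(\Delta(W))$; this is quasiparabolic by the corollary on preimages under Coxeter homomorphisms, and then the proposition that a transitive scaled set with a unique minimal element and quasiparabolic stabilizer is quasiparabolic finishes the argument. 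Your twist argument avoids both of those intermediate results and makes the ``only $S$ and $R$ matter'' principle explicit, which is a nice conceptual point; the paper's version is shorter given the machinery already in place and has the side benefit of explicitly identifying the stabilizer $\Delta_\gamma(W)$, which is used again in the discussion of perfect involutions. All the computations you flag as routine (the formula $(a,b)\cdot w=awc_\gamma(b)^{-1}$ for the pulled-back action, $\ell(w\gamma)=\ell(w)$ via $\gamma$ permuting the positive roots, and $c_\gamma$ permuting $S$ and $R(W)$) check out.
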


\begin{proof}
Indeed, each $W\times W$-orbit in $W^+$ has a unique minimal element,
namely the associated Coxeter automorphism $\phi:W\to W$; the stabilizer of
that minimal element is the quasiparabolic subgroup $\Delta_\phi(W)$.
\end{proof}

\begin{defn}
An involution $\iota\in W^+$ is {\em
  perfect} if for all $r\in R(W)$, $r$ commutes with $\iota r\iota$.
We will denote by ${\cal I}$ the set of all perfect involutions.
\end{defn}

\begin{rems}
Note that $r$ commutes with $\iota r\iota$ iff $(r\iota)^4=1$.  In the case
$W=S_n$, any fixed point free involution is perfect, as follows easily from
the fact that reflections are just 2-cycles.  Similarly, any element
conjugate to the diagram automorphism of $S_{2n}$ is perfect for precisely the
same reason.  These two classes of perfect involutions will give rise to
(quasiparabolic) scaled $W$-sets with negated heights.
\end{rems}

\begin{rems}
  This appears to be too stringent a condition when $W$ is infinite; for
  instance, the obvious analogue in $\tilde{A}_{2n-1}$ of the case of
  fixed-point-free involutions in $A_{2n-1}$ (i.e., one of the two
  conjugacy classes of preimages in $\tilde{A}_{2n-1}$) do not give perfect
  involutions, but are sufficiently well-behaved that they very likely form
  a quasiparabolic set.
\end{rems}

\begin{lem}\label{lem:fpf_prop1}
If $\iota\in W^+$ is perfect, then for all $r\in R(W)$, if
\[
\ell(\iota),\ell(r\iota r)<\ell(r\iota)=\ell(\iota r)
\]
or
\[
\ell(\iota),\ell(r\iota r)>\ell(r\iota)=\ell(\iota r),
\]
then $\iota=r\iota r$.
\end{lem}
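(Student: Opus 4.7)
The plan is to translate both hypotheses of the lemma into a single sign condition on $\iota(\alpha_r)$, where $\alpha_r$ denotes the positive root of the reflection $r$, and then exploit the perfect condition. Applying the standard length--root criterion $\ell(wr) > \ell(w) \iff w(\alpha_r) > 0$ (and its left analogue $\ell(rw) > \ell(w) \iff w^{-1}(\alpha_r) > 0$) to the four length comparisons appearing in the hypotheses, one finds in Case 1 that $\iota(\alpha_r) > 0$ and $r\iota(\alpha_r) < 0$, while in Case 2 both signs are reversed (note that $\iota^{-1} = \iota$ since $\iota$ is an involution, so there is no ambiguity). Letting $\beta$ be whichever of $\pm\iota(\alpha_r)$ happens to be positive, in \emph{both} cases one obtains $\beta > 0$ and $r(\beta) < 0$.

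I would then observe that $t := \iota r \iota$ is the reflection conjugate to $r$ whose positive root is exactly this $\beta$; that is, $t = s_\beta$. Now invoke the perfect condition: $r$ commutes with $t$. If $r = t$, then $\iota r \iota = r$, i.e.\ $\iota r = r\iota$, i.e.\ $r\iota r = \iota$, which is exactly the desired conclusion. Otherwise $r$ and $t$ are two \emph{distinct} commuting reflections, and by the standard Coxeter-geometric fact (two distinct commuting reflections $s_\alpha, s_\beta$ have orthogonal roots in the geometric representation, equivalently $s_\alpha(\beta) = \beta$) we would get $r(\beta) = \beta$, and in particular $r(\beta) > 0$, contradicting the earlier $r(\beta) < 0$. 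This rules out the second subcase, forcing $r = t$ and hence the conclusion.

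The one slightly delicate point to check is that the length--root criterion, usually stated for $w \in W$, applies just as well to $\iota \in W^+$. This is handled by writing $\iota = w \phi$ with $w \in W$ and $\phi$ a Coxeter diagram automorphism, so that $\ell(\phi) = 0$ and $\phi$ permutes the positive roots; the $W^+$-action on roots is then compatible with the $W$-action on $w$, and the criterion carries over verbatim. Beyond this verification, the argument is essentially a mechanical computation at the level of roots, and I do not foresee any other substantive obstacle.
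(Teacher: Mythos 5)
Your proof is correct and is essentially the paper's argument in different packaging: the paper proves the contrapositive by noting that $r\iota$ of order $4$ forces $\iota(\alpha)$ orthogonal to $\alpha$, hence $\ell(\iota)<\ell(\iota r)\iff\ell(r\iota)<\ell(r\iota r)$, while you reach the same orthogonality (via ``$r$ and $\iota r\iota$ distinct commuting reflections have orthogonal roots'') and derive the sign contradiction $r(\beta)=\beta>0$ versus $r(\beta)<0$ directly. Both hinge on exactly the same two ingredients --- the length--root criterion extended to $W^+$ and the geometric consequence of the perfect condition --- so there is nothing substantive to add.
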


\begin{proof}
Note that since $\iota$ is an involution, $r\iota = (\iota r)^{-1}$, and
thus the two elements have the same length as stated.  Note also that the
contrapositive of the lemma reads that if $r\iota$ has order 4, then
$\ell(r\iota)$ is between $\ell(\iota)$ and $\ell(r\iota r)$, and this
inclusion is strict by parity considerations.

Thus suppose that $r\iota$ has order 4.  We can write $r=s_\alpha$ for some
positive root $\alpha$; the fact that $r\iota$ has order 4 implies that
$\iota(\alpha)$ is orthogonal to $\alpha$.  But then
\begin{align}
\ell(\iota)<\ell(\iota r) &\iff \iota(\alpha)>0\\
                          &\iff r\iota(\alpha)>0\\
                          &\iff \ell(r\iota)<\ell(r\iota r),
\end{align}
as required, and similarly for the opposite inequalities.
\end{proof}

The set of perfect involutions is certainly acted on by $W$ by
conjugation; it very nearly becomes a scaled $W$-set by setting
$\hgt(\iota)=\ell(\iota)/2$; this fails only in that on some orbits the
heights might lie in $1/2+\Z$ rather than $\Z$, but this has no effect on
the theory.  (Indeed, for the above theory of quasiparabolic sets to work,
we need only that on each orbit, the height function lies in a fixed coset
of $\Z$.)

\begin{thm}\label{thm:fpf}
The set ${\cal I}$ of perfect involutions in $W^+$, together with the above
height function, is a quasiparabolic $W$-set.
\end{thm}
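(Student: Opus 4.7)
My plan is to reduce both quasiparabolicity axioms for $({\cal I},\hgt)$ to the strong exchange condition on the ambient quasiparabolic $W\times W$-set $(W^+,\ell)$ established in the lemma just above, using Lemma~\ref{lem:fpf_prop1} to translate the length function on $W^+$ into the height function on ${\cal I}$.

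The scaled axiom is immediate: conjugation by $s\in S$ is the two-step composition $\iota\mapsto\iota s\mapsto s\iota s$, each step changing length by $\pm 1$, so the height changes by at most $1$. For (QP1), if $\hgt(r\iota r)=\hgt(\iota)$ then $\ell(r\iota r)=\ell(\iota)$; since $\ell(r\iota)=\ell(\iota r)$ differs from $\ell(\iota)$ by exactly one (by parity), $\ell(\iota)$ and $\ell(r\iota r)$ sit strictly on the same side of $\ell(r\iota)$, and Lemma~\ref{lem:fpf_prop1} forces $r\iota r=\iota$.

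For (QP2), I would first invoke Lemma~\ref{lem:useful_lemma} to pin down $\ell(r\iota r)=\ell(s\iota s)=\ell(\iota)+2$ and $\ell(sr\iota rs)=\ell(\iota)$, and then Lemma~\ref{lem:fpf_prop1} to get $\ell(r\iota)=\ell(s\iota)=\ell(\iota)+1$. The identity $sr\iota s=(sr\iota rs)(srs)$ combined with $srs\in R(W)$ then yields $\ell(sr\iota s)\le\ell(\iota)+1<\ell(s\iota s)$. I then apply strong exchange (Theorem~\ref{thm:strong_exch}) on $(W^+,\ell)$ with $x=\iota$, reflection $r'=(r,1)\in R(W\times W)$, and length-two word $w=(s,1)(1,s)\in W\times W$; the two premises have just been verified, so the conclusion gives $sr\iota s\in\{s\iota,\ \iota s\}$, corresponding to omitting one of the two letters of $w$. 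The alternative $sr\iota s=\iota s$ forces $r=s$, and hence $r\iota r=s\iota s$ trivially; the alternative $sr\iota s=s\iota$ forces $r=\iota s\iota$, whence a direct computation using $\iota^2=1$ gives $r\iota r = \iota s\iota\cdot\iota\cdot\iota s\iota = \iota^2 = 1$, contradicting $\ell(r\iota r)=\ell(\iota)+2>0$.

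The main obstacle I anticipate is finding the right asymmetric decomposition. The $W$-action on ${\cal I}$ is by conjugation, which corresponds to the diagonal element $(r,r)\in W\times W$---a product of two reflections, not itself a reflection in $W\times W$---so strong exchange on $W^+$ does not apply on the nose. Choosing the ``reflection'' step to be just the left-multiplication half $(r,1)$, and absorbing both factors of conjugation by $s$ into the two-letter word $(s,1)(1,s)$, is the key trick; perfectness then enters only through Lemma~\ref{lem:fpf_prop1}, which is exactly what forces $\ell(r\iota)$ strictly between $\ell(\iota)$ and $\ell(r\iota r)$ and makes the two-step analysis work.
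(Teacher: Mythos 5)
Your overall strategy---encoding conjugation by $s$ as the two-letter word $(s,1)(1,s)$ in $W\times W$ and applying Theorem \ref{thm:strong_exch} on $(W^+,\ell)$ to the reflection $(r,1)$---is sound, and it is genuinely different from the paper's argument, which instead analyzes $\ell(sr\iota)\in\{\ell(\iota),\ell(\iota)+2\}$ and applies property QP2 of the $W\times W$-set $W^+$ directly in each case. But there are two problems in your execution. The smaller one: the bound $\ell(sr\iota s)\le\ell(\iota)+1$ does not follow from the identity $sr\iota s=(sr\iota rs)(srs)$ together with $srs\in R(W)$ alone, since right multiplication by a reflection can change length by any odd amount. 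You need Lemma \ref{lem:fpf_prop1} again, applied to the perfect involution $sr\iota rs$ (of length $\ell(\iota)$) and the reflection $srs$: one computes $(srs)(sr\iota rs)(srs)=s\iota s$, of length $\ell(\iota)+2$, and $(srs)(sr\iota rs)$ cannot have order $\le 2$ (that would force $r\iota r=\iota$), so the lemma places $\ell(sr\iota s)$ strictly between $\ell(\iota)$ and $\ell(\iota)+2$.

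The serious problem is your treatment of the alternative $sr\iota s=s\iota$. This case gives $r=\iota s\iota$, and then
\[
r\iota r=\iota s\iota\cdot\iota\cdot\iota s\iota=\iota s\iota s\iota=(\iota s)^2\iota,
\]
which is not $\iota^2=1$; your cancellation is incorrect, and there is no contradiction here. Indeed this alternative is the generic case: for $\iota=(12)(34)\in S_4$, $s=(23)$, $r=(14)=\iota s\iota$, all hypotheses of QP2 hold with $r\ne s$, and $sr\iota s=s\iota$. As written, your argument would ``prove'' that $r=s$ always, which is false. The case is rescued precisely by perfectness: $(\iota s)^4=1$, so $(\iota s)^2=(\iota s)^{-2}=(s\iota)^2$, and hence $r\iota r=(s\iota)^2\iota=s\iota s\iota\iota=s\iota s$, which is exactly the desired conclusion. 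With these two repairs---invoking Lemma \ref{lem:fpf_prop1} for the length bound, and replacing the spurious contradiction by the computation above---your proof goes through.
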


\begin{proof}
To show property QP1, let $\iota$ be a perfect involution, and
let $r\in R(W)$ be such that $\hgt(r\cdot \iota)=\hgt(\iota)$;
equivalently, $\ell(r\iota r)=\ell(\iota)$.  But then Lemma
\ref{lem:fpf_prop1} implies that $r\iota r=\iota$ as required.

It remains to show property QP2; let, therefore, $\iota$ be a perfect
involution, and $r\in R(W)$, $s\in S$ such that (recalling Lemma
\ref{lem:useful_lemma})
\[
\hgt(r\cdot \iota)=\hgt(s\cdot \iota)=\hgt(\iota)+1=\hgt(sr\cdot\iota)+1,
\]
or equivalently
\[
\ell(r\iota r)=\ell(s\iota s)=\ell(\iota)+2=\ell(sr\iota rs)+2.
\]
We need to show that $r\iota r=s\iota s$.

We first observe that
\[
\ell(r\iota)=\ell(\iota)+1.
\]
Indeed, by Lemma \ref{lem:fpf_prop1}, $\ell(r\iota)$ is between
$\ell(\iota)$ and $\ell(r\iota r)$, and thus (since the latter differ by
2), must be $\ell(\iota)+1$ as required.  Similarly,
\[
\ell(s\iota)=\ell(\iota)+1.
\]
Moreover, since $sr\iota rs$ is also a perfect involution, and $srs$ is a
reflection, we find that
\[
\ell(sr\iota rs s)=\ell(sr\iota rs (srs))=\ell(sr\iota rs)+1=\ell(\iota)+1,
\]
or in other words,
\[
\ell(sr\iota r)=\ell(sr\iota s)=\ell(\iota)+1.
\]

Now, consider the element $sr\iota$.  Since $\ell(r\iota)=\ell(\iota)+1$
and $s$ is a simple reflection, we find that $\ell(sr\iota)-\ell(\iota)\in
\{0,2\}$.  We consider the two cases separately.

If $\ell(sr\iota)=\ell(\iota)$, then by the fact that $W^+$ is
quasiparabolic as a $W\times W$-set and the fact that
$\ell(s\iota)=\ell(r\iota)=\ell(\iota)+1$, we conclude that
$s\iota=r\iota$.  But $W^+$ is a group, so we can cancel $\iota$ to find
$s=r$, and thus in particular $s\iota s=r\iota r$ as required.

If $\ell(sr\iota)=\ell(\iota)+2$, then on the one hand
\[
\ell(sr\iota)=\ell(r\iota r)=\ell(r\iota)+1=\ell(sr\iota r)+1,
\]
so $sr\iota=r\iota r$, while on the other hand
\[
\ell((srs)s\iota)=\ell(s\iota s)=\ell(s\iota)+1=\ell((srs)s\iota s)+1,
\]
so $sr\iota = (srs)s\iota = s\iota s$.  Therefore $r\iota r=s\iota s$
as required.
\end{proof}

\begin{rem}
In addition to the special case of fixed-point-free involutions in
$S_{2n}$, there are two general instances of perfect involutions: the
identity element is always perfect, as is the diagram automorphism of
$W\times W$ that swaps the two factors.  The latter quasiparabolic $W\times
W$ set turns out to be naturally isomorphic to $W$.
\end{rem}

Suppose $\iota$ is a $W$-minimal perfect involution, so that by the above
result, the centralizer $Z_W(\iota)$ is quasiparabolic.  Let $z_W(\iota)$
denote the normal subgroup attached to this group by Theorem
\ref{thm:natural_normal}.  This has a particularly nice description
directly in terms of $\iota$.  First a useful lemma.

\begin{lem}
Let $\iota$ be a perfect involution, and let $r$, $r'$ be reflections such
that $r\iota r=r'\iota r'\ne \iota$.  Then $r'\in \{r,\iota r \iota\}$.
\end{lem}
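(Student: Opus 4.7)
The plan is to transfer the problem to the reflection representation of $W^+$, on which $\iota$ acts as an orthogonal involution, and to show that both $r$ and $r'$ must be reflections through roots lying in a single $2$-plane; orthogonality forced by perfectness will then pin $r'$ down. Write $r=s_\alpha$ and $r'=s_\beta$ in this representation.

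First, since $r\iota r\ne \iota$, $r$ does not commute with $\iota$, so $(r\iota)^2\ne 1$. Combined with perfectness, which gives $(r\iota)^4=1$, this forces $(r\iota)^2 = r\cdot(\iota r\iota)$ to have order exactly $2$. Thus $r$ and $\iota r\iota = s_{\iota(\alpha)}$ are distinct commuting reflections, equivalently $\iota(\alpha)\perp\alpha$ and these two roots are linearly independent. The same argument applies to $\beta$, giving $\iota(\beta)\perp\beta$.

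Next I would exploit the identity
\[
r\iota r = \iota\cdot(\iota r\iota)\cdot r = \iota\cdot s_{\iota(\alpha)}s_\alpha,
\]
and observe that $\sigma_\alpha:=s_{\iota(\alpha)}s_\alpha$ acts as $-1$ on $V_\alpha:=\mathrm{span}(\alpha,\iota(\alpha))$ and as the identity on $V_\alpha^\perp$ (which is clear from the fact that the two commuting reflections have orthogonal roots). The analogous formula holds for $r'\iota r'=\iota\sigma_\beta$, so the hypothesis $r\iota r=r'\iota r'$ becomes $\iota\sigma_\alpha=\iota\sigma_\beta$, whence $\sigma_\alpha=\sigma_\beta$. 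Reading off the $(-1)$-eigenspaces yields $V_\alpha=V_\beta$; in particular $\beta\in V_\alpha$.

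To finish, since $\iota$ preserves the bilinear form on the reflection representation, $|\iota(\alpha)|=|\alpha|$, and on $V_\alpha$ the map $\iota$ swaps the orthogonal basis $\{\alpha,\iota(\alpha)\}$. Writing $\beta=a\alpha+b\iota(\alpha)$, one computes $\iota(\beta)=b\alpha+a\iota(\alpha)$ and hence $\langle\beta,\iota(\beta)\rangle = 2ab\,|\alpha|^2$. The orthogonality $\beta\perp\iota(\beta)$ established in the first step forces $a=0$ or $b=0$, so $\beta$ is proportional to $\alpha$ or to $\iota(\alpha)$, giving $r'=s_\beta\in\{s_\alpha,s_{\iota(\alpha)}\}=\{r,\iota r\iota\}$, as desired. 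The main obstacle is really just recognizing the right $2$-plane and the identity $r\iota r=\iota\sigma_\alpha$; once the perpendicularity condition is translated into the statement that $V_\alpha$ is the $(-1)$-eigenspace of an involution determined by $r\iota r$, the rest is a one-line linear algebra computation in two dimensions.
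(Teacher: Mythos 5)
Your proposal is correct and follows essentially the same route as the paper: both pass to the reflection representation, use perfectness plus $r\iota r\ne\iota$ to get $\alpha\perp\iota(\alpha)$ and $\beta\perp\iota(\beta)$, identify the plane $\mathrm{span}(\alpha,\iota(\alpha))$ as canonically determined by $(r\iota)^2$ (you via its $(-1)$-eigenspace, the paper via the simultaneous eigenvectors $\alpha_\pm=(1\pm\iota)\alpha/2$ of $\iota$ and $(r\iota)^2$), and finish with a two-dimensional computation. The only difference is cosmetic: you conclude from $\langle\beta,\iota(\beta)\rangle=2ab|\alpha|^2=0$ rather than from matching eigenvectors up to sign.
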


\begin{proof}
Rewrite the hypothesis as $(r\iota)^2=(r'\iota)^2\ne 1$, and work in the
reflection representation of $W^+$.  Let $r=r_\alpha$ for some 
 root
$\alpha$ normalized so that $\alpha\cdot\alpha=2$.  Since $(r\iota)^2\ne
1$, it follows that $\iota\alpha\cdot\alpha=0$, and thus
\[
(r\iota)^2 = r_\alpha r_{\iota \alpha}
           = r_{\alpha_+} r_{\alpha_-},
\]
where $\alpha_{\pm}:=(1\pm \iota)\alpha/2$.
Now, the vectors $\alpha_{\pm}$ are uniquely determined up to sign
as unit vectors which are simultaneous eigenvectors of $\iota$ and
$(r\iota)^2$, and thus if $r'=r_{\alpha'}$, we find
\[
\alpha'_+ = \pm \alpha_+\qquad
\alpha'_- = \pm \alpha_-.
\]
Solving for $\alpha'$, we find
\[
\alpha'\in \{\pm \alpha,\pm \iota \alpha\}.
\]
The claim follows.
\end{proof}

\begin{thm}
Let $\iota \in {\cal I}$.
The group $z_W(\iota)$ is the subgroup of $W$ generated by elements of the
form $(r\iota)^2$ for $r\in R(W)$.
\end{thm}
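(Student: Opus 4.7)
Let $N := z_W(\iota)$ and $N' := \langle (r\iota)^2 : r \in R(W) \rangle$. The plan is to prove $N = N'$ by showing the two containments separately, using three key ingredients: the preceding lemma (which says $r\iota r = r'\iota r' \ne \iota$ forces $r' \in \{r, \iota r \iota\}$); the perfectness identity $(r\iota)^4 = 1$, which gives $(\iota r)^2 = (r\iota)^{-2} = (r\iota)^2$; and the resulting commutativity of $r$ with $r_2 := \iota r \iota$.

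For $N \subseteq N'$: a (nontrivial) generator of $N$ from Theorem \ref{thm:natural_normal} has the form $w^{-1}(sr)w$ with $s \in S$, $r \in R(W)$, and the height hypotheses force, via QP2 in the quasiparabolic set of perfect involutions, the equality $r\iota'r = s\iota's$ where $\iota' := w\iota w^{-1}$. When $r\iota'r \ne \iota'$, the preceding lemma gives $s \in \{r, \iota' r \iota'\}$; the case $s = r$ yields $sr = 1$, while $s = \iota' r \iota'$ yields $sr = \iota' r \iota' r$, and conjugating by $w^{-1}$ computes
\[
w^{-1}(sr)w = \iota \cdot r'' \cdot \iota \cdot r'' = (\iota r'')^2 = (r''\iota)^2,
\]
with $r'' := w^{-1}rw \in R(W)$, so the generator lies in $N'$.

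For $N' \subseteq N$: fix $r \in R(W)$ with $(r\iota)^2 \ne 1$ (otherwise trivially in $N$), and realize $(r\iota)^2$ as a generator of $N$. Since every reflection is $W$-conjugate to a simple reflection, choose $w \in W$ with $s := wrw^{-1} \in S$, and set $\iota'' := w\iota w^{-1}$, still perfect. Then $s\iota''s = w(r\iota r)w^{-1} \ne \iota''$, so Lemma \ref{lem:fpf_prop1} forces $\ell(\iota'') \ne \ell(s\iota''s)$. If the inequality goes the wrong way, replace $w$ by $wr$: using commutativity of $r$ and $r_2$, one checks that $(wr)r(wr)^{-1} = s$ and $(wr)r_2(wr)^{-1} = wr_2 w^{-1}$ remain unchanged, while $(wr)\iota(wr)^{-1} = w(r\iota r)w^{-1} = s\iota''s$, swapping the two endpoints. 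Hence we may assume $\ell(\iota'') < \ell(s\iota''s)$; then applying QP2 at $\iota''$ with reflection $r_W := w r_2 w^{-1}$ and simple reflection $s$ (whose hypotheses match the arranged inequality) produces
\[
w^{-1}(sr_W)w = rr_2 = r(\iota r \iota) = (r\iota)^2 \in N.
\]

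The main technical obstacle is this swap trick in the second containment: one must simultaneously arrange that $wrw^{-1}$ is simple and that $w\iota w^{-1}$ sits at the lower of the two length levels of $\{w\iota w^{-1}, w(r\iota r)w^{-1}\}$ so that QP2's length hypotheses apply. The commutativity $rr_2 = r_2 r$, an immediate consequence of perfectness, is precisely what allows the right-multiplication of $w$ by $r$ to swap the two endpoints while preserving the conjugate reflections $s$ and $r_W$; without it, the simultaneous choice would not generally be possible.
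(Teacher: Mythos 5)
Your proof is correct and follows essentially the same route as the paper's: the containment $z_W(\iota)\subseteq N'$ via QP2 together with the preceding lemma, and the reverse containment by conjugating $r$ to a simple reflection and exhibiting $(r\iota)^2$ as a QP2-forced generator, with your replacement of $w$ by $wr$ playing exactly the role of the paper's rebasing of the quadruple at $r'w\cdot\iota$ when the length inequality points the wrong way. No gaps.
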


\begin{proof}
We first observe that if $w\in W$, $s\in S$, $r\in R(W)$ are such that
\[
\hgt(sw\cdot \iota)=\hgt(rw\cdot \iota)=\hgt(w\cdot \iota)+1=\hgt(srw\cdot \iota)+1,
\]
so that $w^{-1} srw\in z_W(\iota)$, then
\[
w^{-1}sw\cdot\iota=w^{-1}rw\cdot\iota\ne \iota,
\]
and thus (assuming $r\ne s$) $w^{-1}rw = \iota w^{-1}sw\iota$, so that
\[
w^{-1}srw = (w^{-1}sw\iota)^2,
\]
and thus by the lemma $z_W(\iota)$ is generated by elements of the desired
form.

It remains to show that for any reflection, $(r\iota)^2\in z_W(\iota)$.
Write $r=w^{-1}sw$, and define $r':=w\iota r\iota w^{-1}$, so that $sw\cdot
\iota = r'w\cdot\iota$.  If $\hgt(sw\cdot\iota)>\hgt(w\cdot\iota)$, then
then
\[
\hgt(sw\cdot\iota)
=
\hgt(r'w\cdot\iota)
=
\hgt(w\cdot\iota)+1
=
\hgt(sr'w\cdot\iota)+1,
\]
so that $(r\iota)^2=w^{-1} r' s w\in z_W(\iota)$.  If $\hgt(sw\cdot\iota)<\hgt(w\cdot\iota)$, then
\[
\hgt(s(r'w)\cdot\iota)
=
\hgt(r'(r'w)\cdot\iota)
=
\hgt(r'w\cdot\iota)+1
=
\hgt(sr'(r'w)\cdot\iota)+1,
\]
so that $(r\iota)^2=(\iota r)^2=w^{-1} r' s r' (r' w)\in z_W(\iota)$.
Finally, if $\hgt(sw\cdot\iota)=\hgt(w\cdot\iota)$, then
\[
r\cdot\iota = \iota,
\]
so that $(r\iota)^2=1\in z_W(\iota)$.
\end{proof}

Naturally, Conjecture \ref{conj:nice_normal_is_qp} would imply that the
groups $z_W(\iota)$ are quasiparabolic.  It is fairly straightforward to
classify the pairs $(W,\iota)$ with $W$ finite and $\iota$ perfect.  These
include a number of sporadic cases (for which we have verified
quasiparabolicity via computer), as well as infinite families coming from
fixed-point-free involutions in $A_{2n-1}$ and a corresponding case in
$D_{2n}$.  For the former, $z_W(\iota)$ is simply the even subgroup of
$Z_W(\iota)$ and therefore its quasiparabolicity is automatic.  Thus only
the $D_{2n}$ case (which we have verified through $D_{12}$) remains open
among finite cases.
\section{Bruhat order}\label{sec:bruhat}

\begin{defn}
  Let $(X, \hgt)$ be a quasiparabolic $W$-set.  The {\em Bruhat order} on $X$ is the weakest
  partial order such that for $x\in X$, $r\in R(W)$, $x\le rx$ iff
  $\hgt(x)\le \hgt(rx)$.
\end{defn}

\begin{rem}
Note that as with the usual Bruhat order, $x$ and $rx$ are always
comparable, and $x\le y$ implies $\hgt(x)\le \hgt(y)$.
\end{rem}

Property QP2 can be rephrased in terms of the Bruhat order:

\begin{prop}\label{prop:QP2_bruhat}
If $x<y$ and $sy<sx$, then $sy=x$.
\end{prop}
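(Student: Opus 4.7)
The plan is to extract tight height constraints from the hypotheses, then show that $y$ is obtained from $x$ by a single reflection, and finally invoke property QP2.

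First, since $x < y$ strictly in Bruhat order, any Bruhat chain witnessing $x < y$ must contain at least one reflection step of strict height increase (trivial steps $x_{i+1}=x_i$ being removable, and by QP1 there are no nontrivial steps of equal height), so $\hgt(x) < \hgt(y)$. By the same reasoning applied to $sy < sx$, we get $\hgt(sy) < \hgt(sx)$. Combining these with the scaled-set bounds $|\hgt(sx)-\hgt(x)|\le 1$ and $|\hgt(sy)-\hgt(y)|\le 1$ (recall $s\in S$), the chain
\[
\hgt(y)-1 \le \hgt(sy) < \hgt(sx) \le \hgt(x)+1
\]
forces $\hgt(y) = \hgt(x)+1$, $\hgt(sx) = \hgt(x)+1$, and $\hgt(sy) = \hgt(x)$.

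Next, I would show that $y = rx$ for some single reflection $r \in R(W)$. Any Bruhat chain $x = x_0 \le x_1 \le \cdots \le x_k = y$ has each step $x_{i+1} = r_i x_i$ with $\hgt(x_{i+1}) \ge \hgt(x_i)$; if equality holds, then QP1 gives $x_{i+1} = x_i$, so after deleting trivial steps every step strictly increases the height by at least $1$. Since the total increase is exactly $\hgt(y)-\hgt(x) = 1$, the trimmed chain has length $1$, which gives the desired reflection $r$ with $y = rx$.

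Finally, this $r$ satisfies $\hgt(rx) = \hgt(y) > \hgt(x)$ and $\hgt(srx) = \hgt(sy) = \hgt(x) < \hgt(x)+1 = \hgt(sx)$, so property QP2 applies and yields $rx = sx$. Hence $y = sx$, and therefore $sy = s(sx) = x$, as claimed. The only real subtlety is step two, namely observing that QP1 rigidly controls Bruhat chains between elements whose heights differ by $1$; everything else is routine case analysis and a direct invocation of QP2.
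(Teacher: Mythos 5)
Your proof is correct and follows essentially the same route as the paper's: pin down $\hgt(y)=\hgt(x)+1$ from the two strict inequalities and the scaled-set bounds, deduce that $y$ covers $x$ so that $y=rx$ for a single reflection, and then apply QP2. You simply spell out the details (the role of QP1 in trimming Bruhat chains) that the paper leaves implicit.
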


\begin{proof}
Since $x<y$ implies $\hgt(x)<\hgt(y)$, we conclude that $\hgt(y)=\hgt(x)+1$.
But then $y$ must cover $x$ in the Bruhat order, so that $y=rx$ for some
reflection $r$.  The proposition is then precisely QP2.
\end{proof}

\begin{rem}
We had originally developed a theory in which instead of QP1 and QP2, we
instead insisted only that every orbit of a parabolic subgroup should have
at most one minimal and at most one maximal element.  One can show that
this condition need only be checked in rank 2, where it is equivalent to
quasiparabolicity.  When we eventually began considering how to extend
Bruhat order to such sets, we discovered that there were essentially only
two instances (see Example \ref{eg:bozo} below) of
``quasiparabolic'' sets in which such an
extension failed to exist, so we decided a change in definition was in order. 
In looking at the proofs in \cite{HumphreysJE:1990}
concerning Bruhat order, we found that most of the arguments relied only on
the claim of this proposition.  Rewriting in terms of reflections gave QP1
and QP2.
\end{rem}

There is a related reformulation of quasiparabolicity.

\begin{prop}\label{prop:QP_via_Bruhat}
Let $(X, \hgt)$ be a scaled $W$-set.  Then $X$ is quasiparabolic iff there exists a
partial ordering $\le$ on $X$ such that
\begin{itemize}
\item[(1)] $\hgt$ is strictly increasing: if $x<y$ then $\hgt(x)<\hgt(y)$.
\item[(2)] For any $x\in X$, $r\in R(W)$, $x$ and $rx$ are comparable.
\item[(3)] For any $s\in S$, if $x<y$, $sy<sx$, then $x=sy$.
\end{itemize}
\end{prop}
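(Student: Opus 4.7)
The plan is to prove the two directions separately, with the Bruhat order just defined serving as the witness partial order in the forward direction.

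For $(\Rightarrow)$, I would take $\le$ to be the Bruhat order. Property~(2) holds by construction: the defining relation $x \le rx$ or $rx \le x$ (according to which height is smaller) already makes $x$ and $rx$ comparable. Property~(3) is precisely Proposition~\ref{prop:QP2_bruhat}. For property~(1), the remark after the definition records that $x \le y$ implies $\hgt(x) \le \hgt(y)$; to upgrade to strict inequality when $x < y$, I would decompose a Bruhat chain from $x$ to $y$ into covers $x_{i+1} = r_i x_i$ with $\hgt(x_i) \le \hgt(x_{i+1})$, and invoke QP1 to rule out equality at any step (since $\hgt(r_i x_i) = \hgt(x_i)$ would force $r_i x_i = x_i$, collapsing the cover). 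Hence $\hgt(x) < \hgt(y)$.

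For $(\Leftarrow)$, assume such a partial order $\le$ exists, and verify QP1 and QP2. QP1 is immediate: if $\hgt(rx) = \hgt(x)$, then (2) forces $x$ and $rx$ comparable, but (1) rules out either strict inequality, so $x = rx$. For QP2, suppose $\hgt(rx) > \hgt(x)$ and $\hgt(srx) < \hgt(sx)$; by Lemma~\ref{lem:useful_lemma} the heights collapse to $\hgt(x)+1 = \hgt(rx) = \hgt(sx) = \hgt(srx)+1$. Apply~(2) to $(x, r)$ and then~(1) to conclude $x < rx$ (note $rx \ne x$ by the strict height inequality). Apply~(2) to the pair $(sx, srs)$ -- observing that $srs$ is a reflection and $(srs)(sx) = srx$ -- and then~(1) to conclude $srx < sx$. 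Now condition~(3) with $y := rx$ gives $x = s(rx) = srx$, equivalently $sx = rx$, which is exactly QP2.

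The argument is essentially a repackaging: conditions~(1), (2), (3) were engineered to correspond respectively to height-monotonicity of Bruhat chains, comparability of reflection pairs, and QP2. No step looks genuinely hard; the only modest subtlety is the strictness upgrade in the forward direction, which uses QP1 to eliminate height-preserving covers.
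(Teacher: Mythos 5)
Your proof is correct and follows essentially the same route as the paper: the Bruhat order serves as the witness in the forward direction (with properties (2) and (3) coming from the definition and Proposition \ref{prop:QP2_bruhat}), and the converse verifies QP1 and QP2 directly from comparability plus strict monotonicity of height. The only difference is that you spell out the strictness upgrade in property (1) via QP1, a detail the paper leaves implicit.
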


\begin{proof}
If $X$ is quasiparabolic, we have shown that Bruhat order satisfies these
three properties.  Conversely, QP1 follows by observing that if
$\hgt(rx)=\hgt(x)$, then the only way $x$ and $rx$ can be comparable is if
they are equal, while QP2 follows by observing that the hypothesis of QP2
implies via comparability that $x<rx$, $sx>srx$ and thus $x=srx$ as
required.
\end{proof}

\begin{rem}
  This gives a possible strategy for proving quasiparabolicity of conjugacy
  classes of involutions in $W^+$ for $W$ infinite, namely consider the
  restriction of the usual Bruhat order to the given conjugacy class.  The
  first condition of the proposition always holds, since length is strictly
  increasing in ordinary Bruhat order.  The third condition follows by
  observing that since $|\ell(s\iota s)-\ell(\iota)|\le 2$ and is 0 if and
  only if $s\iota=\iota s$, then the only way the ordering of $\iota$ and
  $\iota'$ can be reversed is if their lengths differ by 2 and conjugation
  by $s$ swaps their lengths.  But then $\iota<s\iota$, so by Lemma
  \ref{lem:bruhat} (or, rather, the statement for ordinary Bruhat order
  that this lemma generalizes), $\iota\le s\iota'$, and $\iota<\iota s$ so
  $\iota\le s\iota' s$.  Since the lengths agree, $\iota=s\iota's$ as
  required.  Thus only the second condition of the proposition
  (comparability of $\iota$ and $r\iota r$) need be shown.  Note that it
  follows from Lemma \ref{lem:fpf_prop1} that $\iota$ and $r \iota r$ are
  comparable whenever $r\iota$ has order 4.  Checking this for the classes
  of interest in $\tilde{A}_{2n-1}$ or $\tilde{C}_{2n}$ (where nice
  combinatorial characterizations of Bruhat order are known
  \cite{BjornerA/BrentiF:2005}) appears tractable, but quite technical, in
  the absence of further ideas.
\end{rem}

We will also need a slight variant of Proposition \ref{prop:QP2_bruhat}.

\begin{lem}\label{lem:bruhat}
If $x\le y$, $s \in S$  then
\begin{itemize}
\item[(1)] Either $sx\le y$ or $sx\le sy$;
\item[(2)] Either $x\le sy$ or $sx\le sy$.
\end{itemize}
In particular, if $sy\le y$ then $sx\le y$; if $x\le sx$, then $x\le sy$.
\end{lem}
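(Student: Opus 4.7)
The plan is to prove parts (1) and (2) simultaneously by induction on the length $k$ of a chain $x = x_0 < x_1 < \cdots < x_k = y$ of Bruhat relations (each step of the form $x_i = r_i x_{i-1}$ with $r_i \in R(W)$ and $\hgt(x_i) > \hgt(x_{i-1})$) witnessing $x \le y$. The base case $k=0$, where $x=y$, is trivial since $sx = sy$.

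For the base case $k=1$, write $y = rx$ with $\hgt(y) > \hgt(x)$. The pivotal observation is the identity $sy = (srs)\cdot sx$ with $srs \in R(W)$, so by the very definition of Bruhat order, $sx$ and $sy$ are automatically comparable, with the order determined by the sign of $\hgt(sy)-\hgt(sx)$. For (1): if $sx \le x$ then $sx \le x \le y$; otherwise $x < sx$ and $\hgt(sx) = \hgt(x)+1$, and a short height calculation shows that either $\hgt(sy) \ge \hgt(sx)$ (giving $sx \le sy$ directly) or else $\hgt(y) = \hgt(x)+1$ with $sy < sx$, at which point Proposition \ref{prop:QP2_bruhat} applied to $x<y$ and $sy<sx$ forces $sy = x$, hence $sx = y$. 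Part (2) runs in parallel: either $sy \ge y$ (so $x \le y \le sy$), or the same height bookkeeping either gives $\hgt(sy) \ge \hgt(sx)$ directly or else lands in the same borderline configuration $\hgt(y)=\hgt(x)+1$, $sy<sx$, $x<y$, where Proposition \ref{prop:QP2_bruhat} again forces $sy = x$, yielding $x \le sy$.

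For the inductive step $k \ge 2$, set $x' = x_{k-1}$ and split the chain into $x \le x'$ (length $k-1$) and $x' \le y$ (length $1$). For (1), the inductive hypothesis applied to $x \le x'$ yields $sx \le x'$ or $sx \le sx'$; the first case gives $sx \le x' \le y$, while in the second, the base case applied to $x' \le y$ produces $sx' \le y$ or $sx' \le sy$, each of which combines transitively with $sx \le sx'$ to give the desired conclusion. Part (2) is entirely symmetric. The two ``in particular'' statements then drop out: if $sy \le y$, the $sx \le sy$ branch of (1) still implies $sx \le y$; and if $x \le sx$, the $sx \le sy$ branch of (2) still implies $x \le sy$.

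The main obstacle is the borderline case $\hgt(y) = \hgt(x)+1$ in the base step, where the simple reflection $s$ reverses the Bruhat relation between $x$ and $y$; this is precisely the situation QP2 is designed to handle, and what makes the dichotomy collapse properly is that Proposition \ref{prop:QP2_bruhat} delivers an actual equality $sy = x$ rather than a mere inequality, which is what allows one of the two alternatives to be met exactly.
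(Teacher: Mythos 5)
Your proof is correct and follows essentially the same route as the paper: reduce by transitivity/induction to the single-reflection case $y=rx$, observe that $sx$ and $sy=(srs)sx$ are comparable by the defining relation of the Bruhat order, and invoke Proposition \ref{prop:QP2_bruhat} to resolve the borderline case $sy<sx$. The only cosmetic difference is that you prove (2) in parallel with (1), whereas the paper obtains it from (1) by the height-negation symmetry.
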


\begin{proof}
  By symmetry, we need only prove $(1)$; the corresponding special case is
  immediate.  Following Humphreys \cite[Prop. 5.9]{HumphreysJE:1990}, we reduce to
  the case $y=rx$ for some $r\in R(W)$.  If $r=s$ or $sx\le x$, then the
  Lemma is immediate.  So we may assume $x<sx$.  Then $sx$ and $sy=(srs)sx$
  are comparable, and the previous proposition gives the desired dichotomy.
\end{proof}

Note as a special case, one has $\min\{x,sx\}\le \min\{y,sy\}$ whenever
$x\le y$.  This in turn is a special case of the following Proposition.
If $I\subset S$, $x\in X$ are such that the orbit $W_Ix$ is bounded from below,
let
\[
\pi_I(x) :=\min_{w\in W_I} wx
\]
denote the (unique) minimal element of that orbit.

\begin{prop} \label{prop:proj_para_bruhat}
Let $(X, \hgt)$ be a quasiparabolic $W$-set.
 Suppose $I\subset S$ is such that every $W_I$-orbit is bounded from below.
Then the map $\pi_I:X\to X$ is order-preserving.
\end{prop}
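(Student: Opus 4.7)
The plan is to induct on the quantity $d(y) := \hgt(y) - \hgt(\pi_I(y))$, which is a non-negative integer since $\pi_I(y)$ achieves the minimum height on the (bounded-below) orbit $W_I y$. The goal, for each fixed $d$, is to show $\pi_I(x) \le \pi_I(y)$ whenever $x \le y$ and $d(y) \le d$.

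For the base case $d(y) = 0$, one has $y = \pi_I(y)$, i.e., $y$ is $W_I$-minimal. I would argue that in general $\pi_I(x) \le x$ in the $W$-Bruhat order, as follows. By the earlier proposition that quasiparabolicity restricts to parabolic subgroups, the orbit $W_I x$ inherits the structure of a quasiparabolic $W_I$-set, so $\pi_I(x)$ is the minimum of that orbit under the $W_I$-Bruhat order; since $R(W_I) \subset R(W)$, the defining generating relations of the $W_I$-Bruhat order are also generating relations of the $W$-Bruhat order, so $W_I$-Bruhat order is contained in $W$-Bruhat order. Hence $\pi_I(x) \le x \le y = \pi_I(y)$.

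For the inductive step, $y$ is not $W_I$-minimal, so there exists $s \in I$ with $\hgt(sy) < \hgt(y)$, i.e., $sy < y$ in Bruhat order. Setting $x' := \min\{x, sx\}$, the special case of Lemma \ref{lem:bruhat} noted just before the proposition gives $x' \le \min\{y, sy\} = sy$. Since $x' \in W_I x$ and $sy \in W_I y$, we have $\pi_I(x') = \pi_I(x)$ and $\pi_I(sy) = \pi_I(y)$. But $d(sy) = \hgt(sy) - \hgt(\pi_I(y)) < d(y)$, so the induction hypothesis applies to the pair $x' \le sy$ and yields $\pi_I(x) = \pi_I(x') \le \pi_I(sy) = \pi_I(y)$.

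The main obstacle, such as it is, is identifying the correct induction quantity and verifying the base case; the latter hinges on the compatibility of $W_I$- and $W$-Bruhat orders via the containment $R(W_I) \subset R(W)$. Once that is in hand, the induction is essentially a one-line application of the already-developed Lemma \ref{lem:bruhat}.
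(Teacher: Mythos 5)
Your proof is correct and runs on the same engine as the paper's, namely the special case $\min\{x,sx\}\le\min\{y,sy\}$ of Lemma \ref{lem:bruhat} iterated until bounded-below orbits force termination: the paper packages the iteration as repeated application of the composition $\pi_{\langle s_1\rangle}\circ\cdots\circ\pi_{\langle s_k\rangle}$, while you package it as induction on $\hgt(y)-\hgt(\pi_I(y))$. The only point worth spelling out is your base-case assertion that $\pi_I(x)\le x$: being the unique element of minimal height in the orbit is not formally the same as being the Bruhat minimum of the orbit, though a one-line induction on $\hgt(z)-\hgt(\pi_I(x))$ (for $z$ not $W_I$-minimal pick $s\in I$ with $sz<z$ and apply transitivity) supplies this.
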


\begin{proof} We have already seen that this holds when $|I|=1$. Thus if
$s_1$,\dots,$s_k$ are the simple reflections in $I$ in some order, the
composition
\[
\pi_{\langle s_1\rangle}\circ\cdots \circ \pi_{\langle s_k\rangle}
\]
is order-preserving and height-decreasing. If we iterate this operation starting at
$x \in X$, we will eventually arrive at a fixed point
clearly equal to $\pi_I(x)$; the proposition follows.
\end{proof}

In particular, for any set of parabolic subgroups $W_{I_1},\dots,W_{I_k}$,
one has the implication
\[
x\le y\implies \pi_{I_i}(y)\le \pi_{I_i}(y), 1\le i\le k.
\]
In the case of $W$ viewed as a quasiparabolic $W$-set, it is a well-known
result of Deodhar that the converse holds whenever $\cap_i I_i=\emptyset$.
This does not hold for general quasiparabolic $W$-sets, however. For
instance, if one views $W$ as a $W\times W$-set, then every maximal
parabolic of $W\times W$ is transitive, so the projected orderings provide
no information. (An even worse example is $W$ acting on $W/W^0$, since
then every nontrivial parabolic subgroup is transitive.) One must thus at
the very least add the necessary condition that it should be possible to
reconstruct $x$ from the projections $\pi_{I_i}(x)$.
We do not know of any examples in which this necessary condition fails to be sufficient.

Let $[x,\infty)$ denote the subset of $X$ consisting of elements $y\ge x$,
and similarly for $(-\infty,y]$.

\begin{cor}\label{cor:easy_balanced}
  Let $x,y\in X$, $s\in S$ be such that $x\le sx$ and $sy\le y$.  Then the
  intervals $[x,\infty)$ and $(-\infty,y]$ are preserved by $s$.  If we
  further have $x\le y$, then $[x,y]$ is also preserved by $s$.
\end{cor}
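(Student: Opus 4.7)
The plan is to apply Lemma \ref{lem:bruhat} twice, once for each interval, and then combine the two results for the bounded interval. Both applications are essentially immediate from the ``in particular'' clause of that lemma, so there is no real obstacle here; the corollary is a direct reformulation.

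In more detail, for the ray $[x,\infty)$, suppose $z \ge x$. I want to show $sz \ge x$. Since $x \le z$ and the hypothesis gives $x \le sx$, the second ``in particular'' clause of Lemma \ref{lem:bruhat} (applied to the pair $x,z$) yields $x \le sz$, as desired. Note that $s$ is an involution on $X$, so ``preserved'' follows from closure under the action of $s$.

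Symmetrically, for $(-\infty, y]$, suppose $z \le y$. Applying the first ``in particular'' clause of Lemma \ref{lem:bruhat} to the pair $z,y$ (using the hypothesis $sy \le y$) gives $sz \le y$, as desired.

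Finally, if $x \le y$ and $x \le z \le y$, then the two results above give $sz \in [x,\infty) \cap (-\infty,y] = [x,y]$, completing the proof. The main (only) subtlety is recognizing that the two ``in particular'' assertions of Lemma \ref{lem:bruhat} are precisely tailored to these two ray preservation statements, and that stability under the involution $s$ is the same as being preserved by $s$.
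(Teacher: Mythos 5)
Your proof is correct and is essentially identical to the paper's: both apply the two ``in particular'' clauses of Lemma \ref{lem:bruhat} to the pairs $(x,z)$ and $(z,y)$ respectively, and obtain the bounded interval as the intersection of the two rays.
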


\begin{proof}
If $x\le z$, then since $x\le sx$, the lemma gives $x\le sz$; similarly, if
$z\le y$, then since $sy\le y$, the lemma gives $sz\le y$.
\end{proof}

It will be useful to be able to restrict our attention to {\em even}
quasiparabolic  $W$-sets, so we will need to know how the Bruhat orders on $X$ and
$\evenX$ are related.

\begin{lem}\label{lem:easy_even}
  If $x\le y$, then $\evenx\le \eveny, \, s_0\evenx\le s_0\eveny$.
  Conversely, if $(x,k)\le (y,l)$ for 
some $k,l \in \F_2$, then $x\le y$.
\end{lem}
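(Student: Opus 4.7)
The plan is to handle the two implications separately, leveraging the fact, established in the preceding theorem, that $\evenX$ is quasiparabolic as a $W \times A_1$-set; note that $R(W \times A_1) = R(W) \sqcup \{s_0\}$, and that every $r \in R(W)$ has $\ell(r)$ odd.

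For the forward implication, induction on the length of a Bruhat chain witnessing $x \le y$ in $X$ reduces the assertion to the single-step case: if $r \in R(W)$ and $\hgt(rx) \ge \hgt(x)$, then $\evenx \le \even{rx}$ in $\evenX$. Set $m = \hgt(rx) - \hgt(x) \ge 0$. If $m = 0$, QP1 forces $rx = x$ and the claim is vacuous. If $m$ is odd, then the second coordinate of $r\evenx = (rx, \hgt(x)+\ell(r))$ agrees with $\hgt(rx)$ modulo $2$, so $r\evenx = \even{rx}$ and $\even\hgt$ strictly increases by $m$, giving $\evenx \le \even{rx}$ directly via the reflection $r$. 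If $m$ is a positive even integer, the parities disagree and instead $r\evenx = s_0\even{rx}$, still with $\even\hgt(r\evenx) = \hgt(rx)+1 > \even\hgt(\evenx)$, so $\evenx \le s_0\even{rx}$.

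To convert this last relation to $\evenx \le \even{rx}$, I apply Lemma \ref{lem:bruhat} inside $\evenX$ in the ``in particular'' form ``if $a \le b$ and $a \le sa$, then $a \le sb$,'' taking $a = \evenx$, $b = s_0\even{rx}$, $s = s_0$: the hypothesis $\evenx \le s_0\evenx$ is immediate since $\even\hgt(s_0\evenx) = \hgt(x)+1$, and the conclusion reads $\evenx \le s_0 \cdot s_0\even{rx} = \even{rx}$. The companion statement $s_0\evenx \le s_0\eveny$ then follows from $\evenx \le \eveny$ by Lemma \ref{lem:bruhat}(1) with $s = s_0$: one branch gives $s_0\evenx \le s_0\eveny$ directly, while the other alternative $s_0\evenx \le \eveny$ combines with $\eveny \le s_0\eveny$ by transitivity.

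For the converse, expand $(x,k) \le (y,l)$ as a chain $(x,k) = z_0, \ldots, z_n = (y,l)$ of generating Bruhat relations in $\evenX$, each of the form $z_i = r_i z_{i-1}$ with $r_i \in R(W) \cup \{s_0\}$, and track the first-coordinate projections $a_i \in X$. Steps with $r_i = s_0$ leave $a_i = a_{i-1}$. For $r_i \in R(W)$, set $m_i = \hgt(r_i a_{i-1}) - \hgt(a_{i-1})$; a direct calculation gives $\even\hgt(z_i) - \even\hgt(z_{i-1}) \in \{m_i-1, m_i, m_i+1\}$, so positivity of this difference forces $m_i \ge 0$, whence $a_{i-1} \le r_i a_{i-1} = a_i$ in $X$ (using QP1 in the edge case $m_i = 0$). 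Chaining yields $x \le y$. The principal technical point is the parity mismatch in the forward direction: a one-step generating relation in $X$ with an even positive height jump corresponds in $\evenX$ not to a single cover from $\evenx$ to $\even{rx}$ but to one landing on its $s_0$-shift, so the direct comparison has to be recovered from structural input (Lemma \ref{lem:bruhat}) rather than from a single reflection.
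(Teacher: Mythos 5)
Your proof is correct. The forward implication is essentially the paper's argument: both reduce by transitivity to a single reflection step, observe that $r\evenx$ lands in $\{\even{rx},s_0\even{rx}\}$ with strictly larger $\even\hgt$, and then invoke Lemma \ref{lem:bruhat} with $s=s_0$ (using $\evenx<s_0\evenx$) to strip off the possible $s_0$-shift; your explicit parity split on $m=\hgt(rx)-\hgt(x)$ just makes visible which of the two cases occurs, and the derivation of $s_0\evenx\le s_0\eveny$ is the same. Where you genuinely diverge is the converse. The paper also reduces to a single generating relation $(y,l)=r(x,k)$, notes that $x$ and $rx$ are comparable in $X$, and rules out $y<x$ by feeding the already-established forward implication into Proposition \ref{prop:QP2_bruhat} to manufacture the absurd identity $s_0\evenx=\eveny$. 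You instead run a purely arithmetic argument along the whole chain: since $\even\hgt(a,k)-\hgt(a)\in\{0,1\}$, an $R(W)$-step in $\evenX$ with nonnegative height change forces $m_i\ge 0$ downstairs, hence $a_{i-1}\le a_i$. Your route is more elementary and self-contained (it needs neither the forward direction nor Proposition \ref{prop:QP2_bruhat}), at the cost of a little bookkeeping; the one point you gloss over is the degenerate $R(W)$-step with $\even\hgt(z_i)=\even\hgt(z_{i-1})$, where the difference is not positive, but there QP1 in $\evenX$ gives $z_i=z_{i-1}$ and the step is vacuous, so nothing is lost.
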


\begin{proof}
By transitivity, we reduce to the case $y=rx$, $\hgt(y)>\hgt(x)$.  But then
$r\evenx\in \{\eveny,s_0\eveny\}$, so that
\[
\even\hgt(r\evenx)\ge \even\hgt(\eveny)=\hgt(y)>\hgt(x)=\even\hgt(\evenx).
\]
This implies that either $\evenx\le \eveny$ or $\evenx\le s_0\eveny$.
Since $\evenx<s_0\evenx$ it follows by Lemma \ref{lem:bruhat} that
$\evenx\le \eveny$, and since $\eveny<s_0\eveny$, that $s_0\evenx\le
s_0\eveny$.

For the converse, we may again reduce to the case $(y,l)=r(x,k)=(rx,k+1)$
with $r\in R(W\times A_1)$ and $\hgt(y,l)>\hgt(x,k)$.  If $r=s_0$, then
$y=x$ and we are done, so we may assume that $r\in R(W)$.  Since $x$ and
$y=rx$ are comparable, it remains only to rule out the possibility that
$y<x$.  But then $\eveny<\evenx$ by the previous case, and thus (since the
ordering of the heights changes) we must have $(x,k)=s_0\evenx$ and
$(y,l)=s_0\eveny$.  In other words, we have both $\eveny<\evenx$ and
$s_0\evenx<s_0\eveny$.  By Proposition \ref{prop:QP2_bruhat}, it follows that
$s_0\evenx=\eveny$, but this is impossible.
\end{proof}

Determining if $s_0\evenx\le \eveny$ is somewhat more subtle.

\begin{lem}\label{lem:hard_even}
If $x<y$, then either $s_0\evenx$ and $\eveny$ are incomparable or
$s_0\evenx<\eveny$, with the latter occurring iff there exists a chain
\[
x=x_0<x_1=r_1x_0<x_2=r_2x_1<\cdots<x_k=y,
\]
$r_i \in R(W)$,
which at some point increases the height by an even amount.
\end{lem}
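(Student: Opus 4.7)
The plan is to first establish the dichotomy by a height computation and then prove the equivalence via a chain-lifting argument using a normal form for elements of $\evenX$. For the dichotomy, since $x<y$ gives $\hgt(y)\ge\hgt(x)+1$, we have $\even\hgt(s_0\evenx)=\hgt(x)+1\le\hgt(y)=\even\hgt(\eveny)$, ruling out $s_0\evenx>\eveny$; combined with $s_0\evenx\ne\eveny$ (their $X$-images differ), the only possibilities are incomparability or $s_0\evenx<\eveny$. The key tool for the equivalence will be the normal form $z=s_0^a\even{w}$, with $w\in X$ and $a\in\F_2$ unique, under which one verifies directly from the definitions that $r\cdot s_0^a\even{w}=s_0^{a+\epsilon}\even{rw}$ with $\epsilon=1$ iff $\hgt(rw)-\hgt(w)$ is even (including the fixed case $rw=w$), that $s_0\cdot s_0^a\even{w}=s_0^{a+1}\even{w}$ is Bruhat-increasing iff $a=0$, and that any honest $r$-step (with $rw\ne w$) is $\evenX$-Bruhat-increasing iff it is $X$-Bruhat-increasing.

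For the $(\Leftarrow)$ direction, given a chain $x=x_0<\cdots<x_k=y$ via reflections $r_i$ with at least one even-$\hgt$-change step, I lift by setting $z_0:=s_0\evenx$ and $z_i:=r_iz_{i-1}$; the formula yields $z_i=s_0^{a_i}\even{x_i}$ with $a_0=1$ and $a_i$ flipping precisely at the even-$\hgt$-change steps, each step being Bruhat-increasing in $\evenX$. Let $E$ be the total number of even steps. If $E$ is odd, then $a_k=0$ and the chain terminates at $\eveny$, done. The delicate case is $E$ even and $\ge 2$: then immediately after the first even step the parity is $a=0$, so an $s_0$-step may be inserted there (Bruhat-increasing by the formula above); because $s_0$ commutes with each $r_i$, the remainder of the chain is simply shifted by $s_0$, remains Bruhat-increasing by the same case analysis, and now terminates at $s_0\cdot s_0\eveny=\eveny$. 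This $s_0$-insertion trick is the main technical point of the proof, and the only step requiring real care beyond unwinding definitions.

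For the $(\Rightarrow)$ direction, any Bruhat chain in $\evenX$ from $s_0\evenx$ to $\eveny$, once projected to $X$ and deduplicated, yields a chain $x=x'_0<\cdots<x'_k=y$ via $W$-reflections in $X$ (using the last bullet of the normal-form discussion to see each honest step is Bruhat-increasing in $X$). To produce an even-$\hgt$-change step, I track $a$ along the original chain: it begins at $1$, ends at $0$, and can flip only via $s_0$-steps, $r$-steps fixing the $X$-element, or $r$-steps with even $\hgt$-change in $X$. The first two categories require $a=0$ beforehand to be Bruhat-increasing, so they can only flip $0\to 1$; hence the net $1\to 0$ transition from $a_0=1$ to $a_m=0$ must come from a step of the third type, which descends to an even-$\hgt$-change step in the deduplicated $X$-chain, as required.
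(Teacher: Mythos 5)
Your proof is correct. The key computation is the same one the paper uses: for a reflection $r\in R(W)$, $r\,\even{w}$ equals $s_0\even{rw}$ exactly when $\hgt(rw)-\hgt(w)$ is even, and an honest reflection step is increasing in $\evenX$ iff it is increasing in $X$. Your ``only if'' direction is essentially the paper's argument (track whether each element of an $\evenX$-chain is $A_1$-minimal or $A_1$-maximal, note that the only increasing steps that can pass from maximal to minimal are honest $W$-reflection steps with even height change, and project). Where you genuinely diverge is the ``if'' direction: you lift the given $X$-chain literally by applying the reflections $r_i$ to $s_0\evenx$, track the parity $a_i$, and when the total number of even steps is even you repair the endpoint by inserting an extra $s_0$-step right after the first even step. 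The paper instead picks a single even step, say the $k$-th, uses the direct observation $s_0\even{x_{k-1}}<\even{x_k}$ as the one crossing between the two sheets, and transports the two halves of the chain by Lemma \ref{lem:easy_even} ($x\le y$ implies both $\evenx\le\eveny$ and $s_0\evenx\le s_0\eveny$); this avoids all parity bookkeeping and the insertion trick. Your version costs a little more case analysis (and you should make sure the claim that shifted honest steps remain increasing is stated for both parities, which it is, since that claim is independent of $a$), but it is self-contained in that it does not invoke Lemma \ref{lem:easy_even}; the paper's version is shorter precisely because it reuses that lemma.
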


\begin{proof}
  We first note that if $x<rx=y$ with $\hgt(rx)-\hgt(x)$ even, then
  $r\evenx=s_0\eveny$, so that $s_0\evenx = r\eveny$, and thus $s_0\evenx$
  and $\eveny$ are comparable, and comparing heights shows that
  $s_0\evenx<\eveny$ as desired.  But then given any chain as hypothesized,
  with say the $k$-th step even, we may apply the previous lemma to
  conclude
\[
s_0\evenx<s_0\even{x_1}<s_0\even{x_2}<\cdots
  <s_0\even{x_{k-1}}<\even{x_k}<\even{x_{k+1}}<\cdots<\eveny.
\]

Conversely, suppose $s_0\evenx<\eveny$, and consider a chain
\[
s_0\evenx<r_1s_0\evenx<\cdots<\eveny.
\]
Each element in this chain is either $A_1$-minimal or $A_1$-maximal; since
the chain begins with a maximal element and ends with a minimal element,
there is at least one step going from maximal to minimal.  Now, the image
of the chain in $\evenX$ is a chain in $X$, except that some consecutive
elements may agree.  If we remove these elements, however, we obtain a
valid chain.  It therefore suffices to show that any step going from
maximal to minimal maps to a step increasing the height by an even amount.
Note that such a step goes from $s_0\even{u}$ to $\even{v}$ for elements
$u$ and $v$ which are related by a reflection in $W$ since $s_0$ 
decreases the height of $s_0\even{u}$.  Since the heights in $\evenX$ differ by an odd
amount, the heights in $X$ differ by an even amount, and $u\ne v$, since
again the height must increase.
\end{proof}

Since the restriction $X|_{W_I}$ of a quasiparabolic $W$-set $X$ to a
parabolic subgroup $W_I$ is quasiparabolic, it is natural to ask how the
Bruhat orders compare.  Clearly if two elements are in distinct
$W_I$-orbits, then they are incomparable with respect to the Bruhat order
on $X|_{W_I}$.  Otherwise, we conjecture in general that the Bruhat order
of the restriction agrees with the restriction of the Bruhat order to any
given orbit.  Unfortunately, to date we can only prove this with an
additional boundedness assumption.

\begin{thm}\label{thm:bruhat_para}
Let $(X, \hgt)$ be a quasiparabolic $W$-set, and let $W_I\subset W$ be a 
parabolic subgroup.  If $x,y\in X$ are in the same bounded $W_I$-orbit,
then $x\le y$ in $X$ iff $x\le y$ in $X|_{W_I}$.
\end{thm}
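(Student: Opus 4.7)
One direction is immediate: the defining relations of the Bruhat order on $X|_{W_I}$, namely $x\le rx$ whenever $r\in R(W_I)$ and $\hgt(x)\le \hgt(rx)$, are also defining relations of the Bruhat order on $X$, since $R(W_I)\subset R(W)$. Hence any chain witnessing $x\le y$ in $X|_{W_I}$ also witnesses $x\le y$ in $X$.

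For the converse, let $z$ be the unique $W_I$-minimal element of the common orbit $W_I\cdot x=W_I\cdot y$, which exists by the boundedness hypothesis together with Theorem \ref{thm:minimal_eq_minimum}. By Corollary \ref{cor:ht_from_stab} applied to the $W_I$-action, each element of the orbit can be written as $uz$ for some $u\in W_I$ whose length realizes the height of the element; write $y=u_2 z$ with $\ell(u_2)=\hgt(y)-\hgt(z)$. I will show by induction on $m=\ell(u_2)$ that for any $x,y\in W_I\cdot z$ with $x\le y$ in $X$, one has $x\le y$ in $X|_{W_I}$. The base $m=0$ forces $y=z$, and then $x\le y=z$ together with minimality of $z$ in the orbit forces $\hgt(x)=\hgt(z)$, whence $x=z=y$ by antisymmetry.

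For the inductive step, choose $s\in I$ with $\ell(su_2)<\ell(u_2)$; then $sy$ also lies in $W_I\cdot z$, its minimal $W_I$-representative has length strictly less than $m$, and $sy<y$ in $X|_{W_I}$ via the cover by $s\in R(W_I)$. If $sx>x$ in $X$, apply Lemma \ref{lem:bruhat}(2) to $x\le y$ with $s$: either $x\le sy$ or $sx\le sy$ in $X$. In the first alternative, either $x=sy$ (so $y=sx$ and $x<y$ holds directly in $X|_{W_I}$ via $s$), or $x<sy$ strictly, and induction applied to the pair $(x,sy)$ gives $x<sy$ in $X|_{W_I}$, which then composes with $sy<y$ to yield $x<y$ in $X|_{W_I}$. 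In the second alternative, a height comparison using $\hgt(x)<\hgt(y)$ forces $sx<sy$ strictly, induction gives $sx\le sy$ in $X|_{W_I}$, and $x<sx\le sy<y$ finishes. If instead $sx<x$, then the configuration $sx<x\le y$ together with $sy<y$ and Lemma \ref{lem:bruhat} yield the standard lifting conclusion $sx\le sy$ in $X$. Induction promotes this to $sx\le sy$ in $X|_{W_I}$, and the converse lifting property applied in the quasiparabolic $W_I$-set $X|_{W_I}$ returns $x\le y$ in $X|_{W_I}$.

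The main obstacle is bookkeeping: Lemma \ref{lem:bruhat} and its lifting corollaries must be invoked both in $X$ (to produce the comparison that feeds the inductive hypothesis) and in $X|_{W_I}$ (to promote that comparison back to the desired one), and at each step one must verify that the inductive parameter $\ell(u_2)=\hgt(y)-\hgt(z)$ strictly decreases on the pair being passed down. The boundedness hypothesis enters only to guarantee that the $W_I$-minimum $z$ exists, so that this parameter is well defined.
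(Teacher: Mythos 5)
Your proof is correct, but it takes a genuinely different route from the paper's. The paper reduces, via transitivity of the order, to the case of a single reflection step $y=rx$ with $\hgt(x)<\hgt(y)$, writes $y=wx_0$ as a reduced expression over the $W_I$-minimal element $x_0$, and applies strong exchange (Corollary \ref{cor:strong_exch}) to the conjugated reflection $w^{-1}rw$: deleting a letter of $w\in W_I$ exhibits $y=w(w')^{-1}x$ with $w(w')^{-1}\in R(W_I)$, so the cover is already realized by a reflection of $W_I$. You instead run the classical lifting-lemma induction: descend from $(x,y)$ to $(x,sy)$ or $(sx,sy)$ using Lemma \ref{lem:bruhat} in $X$, apply the inductive hypothesis (the parameter $\hgt(y)-\hgt(z)$ drops by one), and climb back up using Lemma \ref{lem:bruhat} in the quasiparabolic $W_I$-set $X|_{W_I}$. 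The paper's argument is shorter and yields the sharper local fact that every Bruhat cover between elements of a bounded $W_I$-orbit is induced by a reflection of $W_I$; your argument has the compensating advantage that it treats an arbitrary comparison $x\le y$ directly, and so never needs to know that a chain witnessing $x\le y$ in $X$ can be chosen with intermediate elements inside the $W_I$-orbit --- a point the reduction ``we may assume $y=rx$'' leaves implicit. Two small touches to add: invoke the negation symmetry at the outset so that ``bounded'' may be taken to mean ``bounded below'' (otherwise the orbit has only a $W_I$-maximal element and your $z$ does not exist); and note that in your first case the ``in particular'' clause of Lemma \ref{lem:bruhat} already resolves the dichotomy to $x\le sy$, so the second alternative there is redundant.
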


\begin{proof}
  Since $R(W_I)\subset R(W)$, one direction is obvious (even without
  assuming boundedness).  Thus assume that $x\le y$ in $X$.  By symmetry,
  we may assume that $W_Ix=W_Iy$ has a minimal element $x_0$.  Moreover, by
  transitivity, we may assume that $y=rx$ for some $r\in R(W)$, and it will
  suffice to show that $y=r'x$ for some $r'\in R(W_I)$.  Let $y=wx_0$ be a
  reduced expression for $y$ inside $X|_{W_I}$.  Then we have
\[
\hgt(w^{-1}rwx_0) = \hgt(w^{-1}x)\ge \hgt(x_0)
\]
while
\[
\hgt(w(w^{-1}rw)x_0)=\hgt(x)<\hgt(y)=\hgt(wx_0).
\]
It follows by the strong exchange condition that $rwx_0=w'x_0$ where $w'$
is obtained from a reduced word representing $w$ by removing a single
simple reflection (which must be in $W_I$ since $w\in W_I$).  It follows
that
\[
y = w w^{\prime{-1}} x,
\]
and the result follows by observing that $w w^{\prime{-1}}\in R(W_I)$.
\end{proof}

\begin{rems}
In particular, if $X$ is transitive as a $W_I$-set, then restricting to
$W_I$ does not change the Bruhat order.  As a further special case, the
Bruhat order of $W$ viewed as a $W\times W$-set agrees with the usual
Bruhat order.
\end{rems}

\begin{rems}
Restriction is a special case of the operation $\times_W$ described above;
it would be desirable to understand how to construct the Bruhat order on such
a product from the two original Bruhat orders.
\end{rems}

The proofs of our remaining results on Bruhat order require a global
assumption of boundedness, though we conjecture in each case that this
assumption is unnecessary.  In addition to enabling us to perform induction
on heights, boundedness also gives us an alternate characterization of the
Bruhat order, analogous to the interpretation of the usual Bruhat order in
terms of subwords.  As a result, many of the proofs in the literature carry
over with only minor changes required.

Below, we will fix a quasiparabolic $W$-set $(X, \hgt)$ and a $W$-minimal element
$x_0 \in X$; to simplify notations, we will assume that $X$ is transitive and
$x_0$ has height 0.  (Since elements in different orbits are incomparable,
there is no loss in assuming transitivity.)

\begin{thm}
Let $y=s_1\cdots s_k x_0$ be a reduced expression.  Then $x\le y$ iff one
can write
\[
x = s_{i_1}\cdots s_{i_j}x_0
\]
for some $1\le i_1<i_2<\cdots<i_j\le k$.
\end{thm}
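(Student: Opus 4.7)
The plan is to proceed by induction on $k$, the length of the reduced expression. In the base case $k=0$, $y=x_0$ is $W$-minimal, so by Theorem~\ref{thm:minimal_eq_minimum} combined with the monotonicity of $\hgt$ along Bruhat order, the inequality $x\le y$ forces $x=x_0$, realized as the empty subword. For the inductive step, set $y':= s_2\cdots s_k x_0$. First observe that $y'$ is itself a reduced expression of length $k-1$: since $y=s_1 y'$ has height $k$, the scaled-$W$-set property gives $\hgt(y')\ge k-1$, while Corollary~\ref{cor:ht_from_stab} bounds $\hgt(y')\le k-1$. Hence $\hgt(y')=k-1$, so $y'<y$ and in particular $s_1 y=y'$.

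For the direction ($\Rightarrow$), suppose $x\le y$. Apply Lemma~\ref{lem:bruhat}(2) with the simple reflection $s_1$: either $x\le s_1 y=y'$, or $s_1 x\le s_1 y=y'$. In the first case the inductive hypothesis writes $x$ as a subword of $y'=s_2\cdots s_k x_0$, which is a fortiori a subword of $y$. In the second case the inductive hypothesis writes $s_1 x = s_{i_2}\cdots s_{i_j} x_0$ with $1<i_2<\cdots<i_j\le k$, and left-multiplication by $s_1$ yields $x = s_1 s_{i_2}\cdots s_{i_j} x_0$, a subword of $y$ whose first letter is $s_1$.

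For the direction ($\Leftarrow$), let $x=s_{i_1}\cdots s_{i_j} x_0$ with $1\le i_1<\cdots<i_j\le k$. If $i_1>1$ (including $j=0$), then the same expression is a subword of $y'$, so by the inductive hypothesis $x\le y'<y$. If $i_1=1$, set $x'' := s_{i_2}\cdots s_{i_j} x_0$, a subword of $y'$, so $x''\le y'$ by the inductive hypothesis. Applying Lemma~\ref{lem:bruhat}(1) to $x''\le y'$ with $s_1$ gives either $s_1 x''\le y'$ (hence $\le y$) or $s_1 x''\le s_1 y'=y$; either way $x=s_1 x''\le y$.

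There is no real obstacle here; the argument runs perfectly parallel to the classical proof of the subword characterization of ordinary Bruhat order. Quasiparabolicity enters only indirectly, through the ``lifting property'' of Lemma~\ref{lem:bruhat} and through the height computation supplied by Corollary~\ref{cor:ht_from_stab} that certifies $y'$ as reduced. The only thing that must be matched up carefully is the symmetry between parts (1) and (2) of Lemma~\ref{lem:bruhat}: part (2) drives the forward direction by producing the dichotomy between ``$s_1$ absent'' ($x\le y'$) and ``$s_1$ present'' ($s_1 x\le y'$), while part (1) drives the reverse direction by handling precisely these same two cases of the subword.
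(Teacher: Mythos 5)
Your proof is correct, and the reverse implication (subword $\Rightarrow$ $x\le y$) is exactly the argument the paper intends: the paper's proof simply says that Humphreys's Theorem 5.10 carries over with Lemma \ref{lem:bruhat} in place of his Proposition 5.9, and your case split on $i_1=1$ versus $i_1>1$, using part (1) of Lemma \ref{lem:bruhat}, is that argument verbatim (modulo acting on the left). Where you genuinely diverge is the forward implication. Humphreys --- and hence the paper --- proves ``$x\le y$ implies subword'' by inducting along a chain $x=y_0<y_1<\cdots<y_m=y$ of reflection multiplications and applying the Strong Exchange Condition (here Theorem \ref{thm:strong_exch}, which deliberately allows non-reduced words precisely so that this chain induction goes through) to delete one letter at each step. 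You instead run the forward direction through the same induction on $k$ as the converse, using part (2) of Lemma \ref{lem:bruhat} to split into ``$x\le y'$'' and ``$s_1x\le y'$.'' This is a legitimate and arguably cleaner route: it avoids strong exchange entirely and sidesteps the bookkeeping about intermediate non-reduced expressions, at the cost of relying twice on the lifting lemma rather than once. Your verification that $y'=s_2\cdots s_kx_0$ is itself reduced (via the scaled-set inequality plus Corollary \ref{cor:ht_from_stab}) is a necessary step that you handle correctly, and the base case via Theorem \ref{thm:minimal_eq_minimum} is fine. The only cosmetic caveat is that the two branches of Lemma \ref{lem:bruhat}(2) are not mutually exclusive, so the gloss ``$s_1$ absent versus $s_1$ present'' should not be read as a canonical dichotomy --- but nothing in the proof depends on that.
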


\begin{proof}
  The proof of \cite[Thm
  5.10]{HumphreysJE:1990} carries over mutatis mutandum, except that one must
  multiply by reflections on the left and substitute Lemma \ref{lem:bruhat}
  for \cite[Prop. 5.9]{HumphreysJE:1990}
\end{proof}

Similarly, Proposition 5.10 of Humphreys carries over immediately.

\begin{prop}
If $x<y$ and there is no $z$ such that $x<z<y$, then $\hgt(y)=\hgt(x)+1$.
In particular, any saturated chain
\[
x=x_0<x_1<x_2<\cdots<x_k=y
\]
has length $k=\hgt(y)-\hgt(x)$, and the poset $(X,\le)$ is graded.
\end{prop}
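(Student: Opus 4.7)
The plan: The two assertions of the proposition are linked in a standard way. Once I establish the first (that a covering has height difference exactly $1$), any saturated chain $x = x_0 < x_1 < \cdots < x_k = y$ consists of $k$ covers, each contributing $1$ to the height, so $k = \hgt(y) - \hgt(x)$, and then $\hgt$ serves as the rank function witnessing that $(X,\le)$ is graded. Thus the core content is the first assertion, which I will prove in its contrapositive form: if $x < y$ with $\hgt(y) - \hgt(x) \ge 2$, then there exists $z$ with $x < z < y$.

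I would proceed by induction on $\hgt(y)$, mirroring the classical argument for the Bruhat order on $W$ (cf.\ Humphreys, Prop.~5.11). The base case $\hgt(y) \le 1$ is vacuous. For the inductive step, since $\hgt(y) \ge 2$ the element $y$ is not $W$-minimal, so there exists $s \in S$ with $sy < y$. Applying Lemma \ref{lem:bruhat}(2) to $x \le y$ gives two possibilities: $x \le sy$ or $sx \le sy$.

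If $x \le sy$, then $z := sy$ works, since the height gap $\hgt(sy) - \hgt(x) \ge 1$ forces $x \ne sy$, hence $x < sy < y$. Otherwise $x \not\le sy$ and $sx \le sy$, which in particular forces $sx \ne x$. If $sx > x$, then $sx \ne sy$ (else $x = y$), giving the chain $x < sx < sy < y$, so $z := sx$ works. If $sx < x$, then $\hgt(sy) - \hgt(sx) = \hgt(y) - \hgt(x) \ge 2$, and the inductive hypothesis applied to $(sx, sy)$ produces $z'$ with $sx < z' < sy$. To lift $z'$ to an element strictly between $x$ and $y$, apply Lemma \ref{lem:bruhat}(1) to $sx \le z'$: this yields $x \le z'$ or $x \le sz'$; the first would imply $x \le sy$, contradicting the case assumption, so $x \le sz'$, with equality ruled out because $x = sz'$ would give $sx = z'$, contradicting $sx < z'$. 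Similarly, Lemma \ref{lem:bruhat}(1) applied to $z' \le sy$ yields $sz' \le y$, strictly since $sz' = y$ would force $z' = sy$. Thus $z := sz'$ works.

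The main obstacle is the final subcase $sx < x$, where one must transport the intermediate element $z'$ from the shorter interval $(sx, sy)$ back up to an element of $(x, y)$ via a double application of the lifting lemma, ruling out two degenerate equalities. This is the only place where quasiparabolicity is used beyond comparability, entering through Lemma \ref{lem:bruhat}, which ultimately rests on property QP2. The remaining case work is routine.
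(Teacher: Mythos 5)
Your proof is correct and is essentially the paper's own argument: the paper simply remarks that Humphreys' proof of the chain condition carries over, with Lemma \ref{lem:bruhat} replacing the usual lifting property, and your induction on $\hgt(y)$ via that lemma is precisely that adaptation. The only cosmetic point is that your sub-case $sx>x$ is vacuous, since $x\le sx$, $x\le y$ and $sy\le y$ already force $x\le sy$ by the final clause of Lemma \ref{lem:bruhat}, contradicting the Case 2 hypothesis.
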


In principle, there are two natural partial orderings on the set of perfect
involutions in $W^+$, namely the Bruhat order $\le_{\cal I}$ viewed as a
quasiparabolic set, and the restriction $\le_{W^+}$ of Bruhat order from
$W^+$.  It follows easily from the subword characterization of Bruhat order
that
\[
x\le_{\cal I} y \implies x\le_{W^+} y;
\]
indeed, any reduced word for $y$ in ${\cal I}$ can be extended to a reduced
word for $y$ in $W^+$ by choosing a reduced word for the minimal perfect
involution in its orbit.  The converse appears to be true by experiment,
but we have only been able to prove the following special case.

\begin{prop}
Suppose the perfect involution $\iota_0\in W^+$ is a diagram automorphism.
Then for $x,y$ in the conjugacy class of $\iota_0$,
\[
x\le_{\cal I} y \iff x\le_{W^+} y.
\]
\end{prop}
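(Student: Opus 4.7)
The forward implication $x\le_{\cal I} y\implies x\le_{W^+}y$ is already established. For the converse, I would first parameterize the conjugacy class of $\iota_0$: since $\iota_0$ is a diagram automorphism, $\ell_{W^+}(\iota_0)=0$, so every element has a unique expression $y=v\iota_0$ with $v\in W$ a \emph{twisted involution}, i.e., $\iota_0 v\iota_0=v^{-1}$. Then $\ell_{W^+}(v\iota_0)=\ell_W(v)=2\hgt_{\cal I}(v\iota_0)$, and the $W$-conjugation action on ${\cal I}$ translates to the twisted conjugation $v\mapsto s v\,\iota_0(s)$ (writing $\iota_0(s):=\iota_0 s\iota_0$). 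A reduced expression $y=s_1 s_2\cdots s_k\cdot\iota_0$ in ${\cal I}$ therefore unfolds (by iterating the conjugation) into the palindromic reduced expression
\[
v \;=\; s_1 s_2\cdots s_k\,\iota_0(s_k)\cdots\iota_0(s_1)
\]
in $W$ of length $2k=\ell_W(v)$.

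Given $x=u\iota_0\le_{W^+}y=v\iota_0$, i.e., $u\le v$ in ordinary Bruhat order on $W$, the subword characterization of $\le_{\cal I}$ reduces the claim to producing a subset $J=\{j_1<\cdots<j_l\}\subseteq\{1,\dots,k\}$ for which
\[
u \;=\; s_{j_1}\cdots s_{j_l}\,\iota_0(s_{j_l})\cdots\iota_0(s_{j_1}),
\]
which I would prove by induction on $k$. For the inductive step, pick a simple reflection $s$ with $s\cdot v<v$ in ${\cal I}$, so $v':=sv\,\iota_0(s)$ has length $\ell(v)-2$ and a palindromic reduced expression of length $2(k-1)$, and apply ordinary Bruhat lifting twice to the pair $(u,v)$: once on the left by $s$ (using $sv<v$) and once on the right by $\iota_0(s)$ (using $sv\,\iota_0(s)<sv$). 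In the doubly-ascending subcase one obtains $u\le v'$, and induction immediately supplies $J\subseteq\{2,\dots,k\}$. In the doubly-descending subcase $u':=su\,\iota_0(s)$ is itself a twisted involution of length $\ell(u)-2$ satisfying $u'\le v'$, and induction on $u'$ supplies $J'$ from which $J=\{1\}\cup J'$.

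The main obstacle is handling the two mixed cases (in which $s$ is a left descent but a right ascent of $u$, or vice versa) together with the ``single-descent'' case in which QP1 applied to ${\cal I}$ forces $s\cdot u=u$, equivalently $su=u\,\iota_0(s)$. Here $su\,\iota_0(s)$ need not be a twisted involution at all, or $u$ may already be a fixed point of $s$-twisted conjugation, so one must either change the choice of $s$ or allow the palindromic subword to accommodate ``half-letter'' contributions when a single descent occurs. The perfect-involution hypothesis on $\iota_0$ -- specifically Lemma \ref{lem:fpf_prop1}, which in this setting constrains length-equalities to force the single-descent identities on $u$ and $v$ simultaneously -- provides the rigidity needed to align the descent choices on both elements. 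Carrying out this bookkeeping carefully, which amounts to re-deriving for the present setup the equivalence between the Bruhat order on twisted involutions and the restriction of Bruhat order from $W$, is the technical heart of the argument.
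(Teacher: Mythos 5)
There is a genuine gap: you explicitly defer what you call ``the technical heart of the argument,'' and that deferred step is precisely where the hypothesis that $\iota_0$ is a diagram automorphism must be used. Your reduction to palindromic subwords of twisted involutions is a legitimate reformulation (it is essentially the Richardson--Springer picture), and your doubly-ascending and doubly-descending cases do close under induction. But the ``single-descent'' case --- $s\cdot x=x$ with $sx<_{W^+}x$, i.e.\ $su=u\,\iota_0(s)$ with $su<u$ --- cannot be ``accommodated'' by half-letters or dodged by changing $s$ in general; it must be shown \emph{not to occur} for this conjugacy class. The paper's argument for this is short but essential: if $x=sxs$ and $\ell(xs)<\ell(x)$, then $x$ sends the simple root $\alpha_s$ to $-\alpha_s$; conjugating $x$ to $\iota_0$ would produce a root negated by $\iota_0$, contradicting the fact that a diagram automorphism permutes the positive roots. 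Lemma \ref{lem:fpf_prop1}, which you invoke as the source of rigidity, does not by itself rule this case out --- it only relates the lengths of $\iota$, $r\iota$, $r\iota r$ when $(r\iota)^4=1$, and says nothing when $r\iota r=\iota$. Without the root-theoretic exclusion your induction does not close, and indeed the statement is only conjectural for general perfect involutions (as the remark following the proposition notes), so some use of the diagram-automorphism hypothesis is unavoidable.

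Two smaller points. First, the ``mixed cases'' you worry about are vacuous: for a twisted involution $u$ one has $\ell(u\,\iota_0(s))=\ell(\iota_0 u^{-1}s\,\iota_0)=\ell(su)$, so $s$ is a left descent of $u$ if and only if $\iota_0(s)$ is a right descent --- equivalently, $\ell(sx)=\ell(xs)$ for any involution $x\in W^+$ since $(sx)^{-1}=xs$. Second, for comparison, the paper's proof avoids the subword machinery entirely: it inducts on $\hgt(x)+\hgt(y)$, choosing $s$ with $s\cdot y<_{\cal I}y$ and applying the lifting Lemma \ref{lem:bruhat} twice --- once to $W^+$ as a $W\times W$-set to descend to the pair $(x,s\cdot y)$ or $(s\cdot x,s\cdot y)$, and once to ${\cal I}$ as a quasiparabolic $W$-set to lift the conclusion back --- with the root argument disposing of the one remaining case. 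If you supply that exclusion, your route would also work, at the cost of re-deriving the subword/palindrome correspondence that the paper's softer induction sidesteps.
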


\begin{proof}
  Suppose $x\le_{W^+} y$.  If $y=\iota_0$ (note that $\ell(\iota_0)=0$, so
  $\iota_0$ is minimal), the claim is immediate.  Otherwise, there exists a
  simple reflection $s$ such that $s\cdot y<_{\cal I}y$, and in particular
  $sys<_{W^+}sy,ys<_{W^+}y$.  If $x<_{W^+}sx,xs$, then by Lemma
  \ref{lem:bruhat} (applied to $W^+$ as a $W\times W$-set), $x<_{W^+}y$
  implies $x<_{W^+}sy$ implies $x<_{W^+}sys$.  Thus by induction on
  $\hgt(x)+\hgt(y)$, $x<_{\cal I}sys<_{\cal I} y$.  If
  $x>_{W^+}sx,xs>_{W^+}sxs$, then we similarly have $sxs<_{W^+}sys$, so
  that by the same induction, $s\cdot x<_{\cal I} s\cdot y$, and again
  Lemma \ref{lem:bruhat} applied to ${\cal I}$ gives $x<_{\cal I} y$.

  It remains to consider the case $x=sxs>_{W^+}sx,xs$.  But this implies
  that the simple root $\alpha_s$ is taken to its negative by $x$.
  Conjugating $x$ to $\iota_0$ would then give a root $\alpha$ negated by
  $\iota_0$.  However, $\iota_0$ preserves the set of positive roots, so no
  such $\alpha$ can exist.
\end{proof}

\begin{rem}
  Clearly, the first part of the proof holds in general, and thus one
  reduces to showing that if $x=sxs>_{W^+}sx,xs$, then the subset
  $[x,\infty)_{W^+}\cap {\cal I}$ is preserved by the action of $s$ by
  conjugation. (Note that the analogous statement for $[x,\infty)_{\cal I}$
  follows by Corollary \ref{cor:easy_balanced}.)  Using this criterion, one
  may verify the equivalence of these partial orders for various finite
  Coxeter groups, including all sporadic groups, $B_n$, and $D_n$ for $n\le 8$. 
It also follows for $A_{2n-1}$, as there are in that case two nontrivial
  conjugacy classes of perfect involutions: in one, the minimal element is
  a diagram automorphism $\iota_0$ such that $\iota_0x\iota_0 = w_0 x w_0$
  where $w_0$ is the longest element.  Multiplying by the central element
  $\iota_0w_0$ swaps the two conjugacy classes and reverses both Bruhat
  orders.
\end{rem}

\section{The topology of Bruhat order}\label{sec:top}

One of the more important invariants of a poset is its M\"obius function, or
equivalently the Euler characteristics of the order complexes associated to
intervals in the poset.  In the case of the regular representation of $W$
(or equivalently, $W$ viewed as a quasiparabolic $W\times W$-set, since the
orders are the same), there is a classical formula for the M\"obius
function (\cite{VermaD-N:1971,DeodharVV:1977}), which extends to a formula
for the M\"obius function of $W/W_I$.  The latter is somewhat more
complicated, which as we will see is a consequence of the fact $W/W_I$ is
not even.  Indeed, for any bounded even quasiparabolic $W$-set, we have a
very simple formula for the M\"obius function.

\begin{thm}\label{thm:moeb_even}
Let $(X, \hgt)$ be an even  bounded quasiparabolic $W$-set.  Then
the M\"obius function of $(X,\le)$ is given by
\[
\mu(x,y) = \begin{cases} (-1)^{\hgt(y)-\hgt(x)} & x\le y\\
 0 & \text{otherwise.}
\end{cases}
\]
Equivalently, any interval $[x,y]$ with $x<y$ is balanced in the sense that
it has equal numbers of elements with odd and even heights.
\end{thm}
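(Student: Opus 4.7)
The plan is to establish the equivalent balanced formulation: for every $x<y$ in $X$, the interval $[x,y]$ contains equal numbers of elements of each height parity. Indeed, applying the defining Möbius recursion $\sum_{x\le z\le y}\mu(x,z)=\delta_{x,y}$ and inducting on $\hgt(y)-\hgt(x)$ shows that balancedness is equivalent to the claimed closed formula for $\mu$, so it suffices to establish balance on all nontrivial intervals.

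The key consequence of evenness is that every simple reflection $s\in S$ acts on $X$ as a fixed-point-free, parity-reversing involution: by definition of even, no element is fixed by any $s\in S$ (since $\ell(s)=1$ is odd), and combined with the scaled-set property this forces $\hgt(sz)=\hgt(z)\pm 1$. I then proceed by induction on $\hgt(y)-\hgt(x)$, with the base case $\hgt(y)-\hgt(x)=1$ immediate ($[x,y]=\{x,y\}$ has heights of opposite parity). For the inductive step, $x<y$ implies $y$ is not $W$-minimal (by Theorem \ref{thm:minimal_eq_minimum}, since $x=wy$ for some $w$ and $\hgt(x)<\hgt(y)$), so pick $s\in S$ with $sy<y$. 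If in addition $sx>x$ (Case A), then Corollary \ref{cor:easy_balanced} shows $s$ preserves $[x,y]$, and the free parity-flipping $s$-action partitions $[x,y]$ into two-element orbits $\{z,sz\}$ of opposite parity, yielding balance directly.

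The main obstacle is Case B, where $sx<x$ (so $s$ is a common descent of $x$ and $y$ and does not preserve $[x,y]$). The strategy is to compare $[x,y]$ to the three related intervals $[sx,y]$, $[sx,sy]$, and (when defined) $[x,sy]$. First, Lemma \ref{lem:bruhat} applied to $x\le y$ immediately yields $sx\le sy$, so $[sx,sy]$ is a nontrivial interval. Next, the enlarged interval $[sx,y]$ itself falls under Case A with the same $s$, since $s\cdot sx=x>sx$ and $sy<y$; hence $[sx,y]$ is already balanced. One then partitions $[x,y]=A\sqcup B$ according as $s$ is an ascent or descent of each element, checks (using Lemma \ref{lem:bruhat}) that $s$-multiplication bijects $A$ onto a subset $B_0\subseteq B$ with $A\cup B_0$ automatically parity-balanced, and that it bijects $B\setminus B_0$ parity-reversingly with $[sx,y]\setminus[x,y]$. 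To translate these bijections into balance of $[x,y]$ one needs additional input: either an isomorphism $[x,y]\cong[sx,sy]$ via $s$-multiplication in the sub-case $x\not\le sy$ (where every $z\in[x,y]$ turns out to have $s$ as a descent, allowing induction on $\hgt(y)$ applied to the smaller $[sx,sy]$), or in the sub-case $x\le sy$ the four-interval decomposition $[sx,y]=[sx,sy]\cup[x,y]$ with intersection $[x,sy]$, so that inclusion-exclusion combines balance of $[sx,y]$ with inductively established balance of $[sx,sy]$ and $[x,sy]$ (both with strictly smaller height gap or height sum) to yield balance of $[x,y]$. The principal technical step is verifying the four-interval decomposition in the sub-case $x\le sy$, for which I would use Lemma \ref{lem:bruhat}, Proposition \ref{prop:QP2_bruhat}, and the subword characterization of Bruhat order that is available by boundedness.
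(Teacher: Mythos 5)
Your proposal is correct and follows essentially the same route as the paper's proof: the base case via Corollary \ref{cor:easy_balanced} together with the free parity-reversing action of $s$ on an even set, and, when $s$ is a common descent of $x$ and $y$, the set identity $[sx,y]\setminus[x,y]=[sx,sy]\setminus[x,sy]$ (your four-interval decomposition), which the paper proves in two lines using only Lemma \ref{lem:bruhat}. The only differences are cosmetic: the paper runs a single induction on $\hgt(x)+\hgt(y)$ (which you should adopt, since $\hgt(y)-\hgt(x)$ does not decrease in passing to $[sx,sy]$), and it treats your sub-case $x\not\le sy$ uniformly rather than separately, since $[x,sy]=\emptyset$ is then vacuously balanced and the same inclusion-exclusion applies.
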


\begin{proof}
The equivalence of the two statements follows immediately from the
definition of the M\"obius function: an interval is balanced iff
\[
\sum_{x\le z\le y} (-1)^{\hgt(y)-\hgt(z)} = 0,
\]
so if every nontrivial interval is balanced, the claimed formula for $\mu$
indeed gives an inverse in the incidence algebra of $(X,\le)$ to the ``zeta
function'' of $(X,\le)$ (i.e., the function which is $1$ precisely when
$x\le y$ and $0$ otherwise).

We now follow the proof in \cite{DeodharVV:1977}, with some simplification
arising from the fact that $X$ is even.  Note first that by Corollary
\ref{cor:easy_balanced}, it follows that any interval $[x,y]$ for which
there exists a simple reflection $s$ with $x<sx$ and $sy<y$ is preserved by
$s$ and thus balanced.  We will use such intervals as a base case for an
induction on $\hgt(x)+\hgt(y)$.

Since $y>x$, $y$ is not minimal, and thus there exists some simple
reflection $s$ such that $sy<y$.  If we had $sx>x$, the interval $[x,y]$
would be balanced per the previous paragraph, so we may as well assume
$sx<x$.  We then claim that one has the following identity of sets:
\[
[sx,y]\setminus [x,y] = [sx,sy]\setminus [x,sy].
\]
Note that each interval other than $[x,y]$ that appears in this expression
is balanced by induction; since clearly $[x,y]\subset [sx,y]$ and
$[sx,sy]\subset [x,sy]$, it will follow that $[x,y]$ is balanced.

In other words, we wish to show that
\[
\{z:z\in X| sx\le z\le y; x\not\le z\}
=
\{z:z\in X| sx\le z\le sy; x\not\le z\}.
\]
The set on the right is clearly contained (since $sy<y$) in the set on the
left, so it suffices to prove the opposite inclusion.

Thus suppose $sx\le z\le y$ but $x\not\le z$; we need to show $z\le sy$.
Since $sx\le z$ but $x\not\le z$, Lemma \ref{lem:bruhat} shows that $x\le
sz$ and thus $sz>z$ (since $z>sz$ would lead to a contradiction).  Since
$z\le y$ and $z<sz$, the same lemma shows $z\le sy$ as required.
\end{proof}

We can then compute the M\"obius function in general by using the M\"obius
function on $\evenX$.

\begin{cor}\label{cor:moeb_odd}
Let $(X, \hgt)$ be a bounded quasiparabolic $W$-set with odd stabilizers.  Then the
M\"obius function of $(X,\le)$ can be described as follows.  If $x\not\le
y$, then $\mu(x,y)=0$.  Otherwise, $\mu(x,y)=(-1)^{\hgt(y)-\hgt(x)}$,
unless there exists a chain
\[
x<r_1x<r_2r_1x<\cdots<y,
\]
$r_i \in R(W)$,
in which the height increases by an even amount at some step; in that case,
$\mu(x,y)=0$.
\end{cor}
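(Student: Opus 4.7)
The plan is to reduce to the even case (already handled by Theorem \ref{thm:moeb_even}) by passing to the even double cover $\evenX$. If $x \not\le y$ then $\mu_X(x,y) = 0$ by definition, so assume $x \le y$. Lemmas \ref{lem:easy_even} and \ref{lem:hard_even} jointly describe the interval $[\evenx, \eveny] \subset \evenX$: it contains the lift $\evenz$ exactly when $z \in [x, y]$, and additionally contains the ``raised'' lift $s_0\evenz$ exactly when $z < y$ and some reflection chain from $z$ to $y$ passes through an even-height step. Call $z \in [x, y]$ \emph{good} if no reflection chain from $x$ to $z$ has an even-height step; by concatenating reflection chains, if no even-step chain from $x$ to $y$ exists, then every $z \in [x, y]$ is good and no $z \in [x, y)$ admits an even-step chain to $y$.

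In the first case of the corollary, the previous observation gives $[\evenx, \eveny] = \{\evenz : z \in [x, y]\}$, and Lemma \ref{lem:easy_even} shows that $z \mapsto \evenz$ is a rank-preserving poset isomorphism from $[x, y]$ onto $[\evenx, \eveny]$. Theorem \ref{thm:moeb_even} then immediately yields $\mu_X(x, y) = \mu_{\evenX}(\evenx, \eveny) = (-1)^{\hgt(y) - \hgt(x)}$.

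In the remaining case, Lemma \ref{lem:hard_even} gives $s_0\evenx \le \eveny$ in $\evenX$, and I proceed by induction on $\hgt(y) - \hgt(x)$. If $z \in [x, y)$ is not good, concatenation again produces an even-step chain from $x$ to $z$, so the inductive hypothesis gives $\mu_X(x, z) = 0$. In the defining recursion $\sum_{z \in [x, y]} \mu_X(x, z) = 0$, only good $z$ contribute; since $y$ is not good in this case, isolating $\mu_X(x, y)$ and substituting the inductive formula on good $z \in [x, y)$ reduces the goal $\mu_X(x, y) = 0$ to the identity $\sum_{z \text{ good}} (-1)^{\hgt(z)} = 0$. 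I establish this by applying Theorem \ref{thm:moeb_even} to both $[\evenx, \eveny]$ and $[s_0\evenx, \eveny]$ in the even set $\evenX$: both intervals are balanced, hence so is their set-theoretic difference. A careful use of Lemmas \ref{lem:easy_even} and \ref{lem:hard_even} identifies this difference as precisely $\{\evenz : z \text{ good}\}$: any $s_0\evenz \in [\evenx, \eveny]$ automatically satisfies $s_0\evenx \le s_0\evenz$ (since $x \le z$), while $s_0\evenx \le \evenz$ holds exactly when an even-step chain from $x$ to $z$ exists. Since $\even\hgt(\evenz) = \hgt(z)$, balancedness of the difference delivers the required identity.

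The main technical point is the identification $[\evenx, \eveny] \setminus [s_0\evenx, \eveny] = \{\evenz : z \text{ good}\}$, which requires applying the two lemmas separately to each of the four lift-type pairs; once this bookkeeping is in place, the rest is routine M\"obius-function inclusion-exclusion.
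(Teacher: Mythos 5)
Your proof is correct, and it follows the paper's reduction to the even double cover $\evenX$ (using Theorem \ref{thm:moeb_even} together with Lemmas \ref{lem:easy_even} and \ref{lem:hard_even}), but it executes the final step differently. The paper observes that $x\mapsto\min\{x,s_0x\}$ is a dual closure operation on $\evenX$ with quotient $X$ and invokes Crapo's theorem to get, in one line, $\mu_X(x,y)=\mu_{\evenX}(\evenx,\eveny)+\mu_{\evenX}(s_0\evenx,\eveny)$; both terms are then evaluated by Theorem \ref{thm:moeb_even}, and Lemma \ref{lem:hard_even} decides when the second is nonzero. You instead run an induction on $\hgt(y)-\hgt(x)$ and, in the nontrivial case, derive the needed identity $\sum_{z\text{ good}}(-1)^{\hgt(z)}=0$ from the balancedness of $[\evenx,\eveny]$ and $[s_0\evenx,\eveny]$ together with the identification $[\evenx,\eveny]\setminus[s_0\evenx,\eveny]=\{\evenz : z\text{ good}\}$ --- which is, in effect, a hands-on proof of the relevant special case of Crapo's theorem. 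Your bookkeeping of which lifts lie in which interval (every $s_0\evenz$ in $[\evenx,\eveny]$ automatically lies above $s_0\evenx$, while $s_0\evenx\le\evenz$ exactly for the non-good $z$) is accurate, and the concatenation argument correctly propagates ``goodness'' through the interval. What your route buys is self-containedness: no appeal to an external closure-operator theorem (and no need to note that Crapo's result, stated for lattices, extends to locally finite posets); what it costs is the extra induction and case analysis that the closure formalism absorbs.
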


\begin{proof}
  The map $x\mapsto \min\{x,s_0x\}$ is a (dual) closure operation on
  $\even{X}$, with quotient poset $X$; we may thus apply
  \cite[Thm. 1]{CrapoHH:1968} (stated only for lattices in the reference,
  but the proof works for any locally finite poset) to compute the M\"obius
  function of $X$ from the M\"obius function of $\evenX$.  To be precise,
\[
\mu(x,y) = \mu(\evenx,\eveny)+\mu(s_0\evenx,\eveny),
\]
since the preimage of $x$ in $\evenX$ is $\{\evenx,s_0\evenx\}$.  Since
$\evenx\le\eveny$, the first term is given by
\[
\mu(\evenx,\eveny)=
(-1)^{\even\hgt(\eveny)-\even\hgt(\evenx)}=(-1)^{\hgt(y)-\hgt(x)}.
\]
The second term is either 0 or the negative of this, and is nonzero
precisely when $s_0\evenx\le \eveny$.  The corollary then follows by Lemma
\ref{lem:hard_even}.
\end{proof}

\begin{rem}
More generally, if one chooses $I\subset S$ such that each $W_I$-orbit is
bounded below, then  by Proposition \ref{prop:proj_para_bruhat}
the operation $\pi_I$
is again a dual closure operation with quotient the subposet $X_I$ of
$W_I$-minimal elements, and one has
\[
\mu_{X_I}(\pi_I{x},\pi_I{y}) = \sum_{z\in W_Ix} \mu_X(z,\pi_I{y}).
\]
It is unclear whether there is a simpler expression for the right-hand
side.
\end{rem}

We also have a generalization of the results of Bj\"orner and Wachs
\cite{BjornerA/WachsM:1982} regarding the shellability (and thus
Cohen-Macaulay-ness) of Bruhat order on Coxeter groups.

\begin{thm}
  Let $(X, \hgt)$ be a bounded quasiparabolic transitive $W$-set.  Then for any
  pair $x\le y$ in $X$, the interval $[x,y]$ is lexicographically
  shellable.  In particular, if $\hgt(y)-\hgt(x)=d+2\ge 2$, then the
  corresponding order complex is homeomorphic to either a $d$-sphere or a
  $d$-cell, depending on whether the Euler characteristic (i.e.,
  $\mu(x,y)$) is $\pm 1$ or $0$.
\end{thm}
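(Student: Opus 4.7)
My plan is to construct a lexicographic shelling of each interval $[x,y]$, following the strategy of Bj\"orner and Wachs but using the strong exchange condition of Theorem~\ref{thm:strong_exch} in place of the corresponding property of $W$ itself. Fix a total reflection ordering $<_R$ on $R(W)$. For each covering relation $z \lessdot z'$ in the Bruhat order on $X$, write $z' = rz$ for some $r \in R(W)$, and define the edge label $\lambda(z,z') \in (R(W),<_R)$ to be the $<_R$-least such $r$; I claim this is an EL-labeling of every interval $[x,y]$.

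I would verify the EL property by induction on rank. The base case is rank two: by Theorem~\ref{thm:bruhat_para}, every rank-two subinterval of $[x,y]$ sits inside the Bruhat order of a $\langle s, s'\rangle$-orbit in $X$ for a suitable pair of simple reflections, and a direct case analysis of the possible rank-two quasiparabolic orbits (combined with the defining dihedral axiom of the reflection ordering) shows that each such interval has a unique rising chain which is also lexicographically first. The inductive step then runs in the usual way: given a saturated chain with a label inversion, Corollary~\ref{cor:strong_exch} lets us pivot through a rank-two subinterval to produce a lex-earlier chain, and iterating converges on the unique rising chain.

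Once EL-shellability is established, Bj\"orner's theorem yields that the order complex $\Delta(x,y)$ of the open interval is shellable and pure of dimension $d = \hgt(y) - \hgt(x) - 2$ (using that the poset is graded). To upgrade shellability to the sphere-or-cell dichotomy I would invoke the standard criterion that a shellable simplicial pseudomanifold is a PL $d$-sphere when closed and a PL $d$-ball when it has boundary. The pseudomanifold condition translates to the assertion that every rank-$2$ interval in $[x,y]$ has at most four elements; this is cleanest to check after passing to the even double cover, where Theorem~\ref{thm:moeb_even} already shows every interval is Eulerian, and then transferring back via Lemma~\ref{lem:hard_even}. Finally the sphere versus cell dichotomy is detected by the reduced Euler characteristic, which by P.~Hall's theorem equals $\mu(x,y)$ and has been computed in Theorem~\ref{thm:moeb_even} and Corollary~\ref{cor:moeb_odd}: it is $\pm 1$ in the sphere case and $0$ in the cell case.

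The main obstacle is the EL-verification itself: unlike for $W$ or $W/W_I$, a single cover in a general quasiparabolic set can be induced by more than one reflection (as in the $S_4$-action on fixed-point-free involutions), so the naive Bj\"orner-Wachs labeling is not well-defined without a canonical choice, and one has to check that the ``$<_R$-minimum'' rule actually interacts correctly with strong exchange in rank-two subintervals. I expect that first reducing to the even case via the double cover, where uniqueness of the covering reflection is much more transparent and intervals are automatically Eulerian, will streamline both the EL-verification and the pseudomanifold check, with the odd case then following by a separate double-cover argument using the relationship between the Bruhat orders on $X$ and $\evenX$ from Lemmas~\ref{lem:easy_even} and~\ref{lem:hard_even}.
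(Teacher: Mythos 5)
Your overall skeleton agrees with the paper's: a Bj\"orner--Wachs-style lexicographic labeling, reduction to the existence and uniqueness of an increasing chain in each length-two interval, thinness of the complex via the M\"obius function computation (Theorem \ref{thm:moeb_even} and Corollary \ref{cor:moeb_odd}), and the sphere/cell dichotomy read off from the Euler characteristic. Those last two steps are fine. The gap is in the central step, the verification of the shelling condition on length-two intervals, which you defer to ``a direct case analysis of the possible rank-two quasiparabolic orbits'' after reducing to rank two ``by Theorem \ref{thm:bruhat_para}.'' That theorem gives no such reduction: it compares the Bruhat order on $X$ with the Bruhat order on $X|_{W_I}$ for elements lying in a common bounded $W_I$-orbit, but a length-two Bruhat interval $[u,v]$ inside $[x,y]$ is in general \emph{not} contained in a single orbit of a rank-two standard parabolic subgroup. (Already for the regular representation of $S_4$, the interval $[s_2,\,s_2s_1s_3]$ has its covers realized by the reflections $s_2s_1s_2$ and $s_2s_3s_2$, and is not contained in any $\langle s_i,s_j\rangle$-orbit.) The subgroup generated by the reflections realizing the covers of such an interval is at best a dihedral \emph{reflection} subgroup, and the paper's theory gives no control there: restriction of a quasiparabolic set to a non-parabolic reflection subgroup is not known to remain quasiparabolic, and the reflection-cocycle machinery that makes the ``least reflection in a reflection ordering'' labeling work for $W$ itself is exactly what is unavailable in this setting. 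Your inductive ``pivot to a lex-earlier chain'' step inherits the same problem, and the non-uniqueness of the covering reflection (which you correctly flag) is not resolved by taking the $<_R$-minimum without a proof that this choice interacts with strong exchange.

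The paper avoids reflection orderings entirely. It fixes one reduced expression $y=s_1\cdots s_kx_0$ from the global minimal element and labels a saturated chain by the \emph{positions} of the letters deleted at each step; by the subword characterization of Bruhat order this is well defined, and the EL condition reduces, as in Bj\"orner--Wachs, to showing that every $x<y$ with $\hgt(y)=\hgt(x)+2$ admits a chain $x=s_1\cdots\widehat{s_i}\cdots\widehat{s_j}\cdots s_kx_0<s_1\cdots\widehat{s_i}\cdots s_kx_0<y$ with $i$ minimal and $j>i$. The Bj\"orner--Wachs existence argument uses cancellation in the group and fails here; the paper replaces it with an induction on $\hgt(y)$ that applies $s_1$ to the putative bad chain and uses Lemma \ref{lem:bruhat} together with Corollary \ref{cor:strong_exch} to contradict the minimality of $i$. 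That single induction is the real content of the theorem and is the piece your proposal leaves unproved; the double-cover reduction you suggest is not needed for it, since the subword argument works uniformly in the even and odd cases.
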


\begin{proof}
As in \cite{BjornerA/WachsM:1982}, we fix a reduced word
\[
y = s_1s_2\cdots s_k x_0
\]
where $x_0$ is the minimal element of $X$, and label a saturated chain
\[
x=x_0<x_1<x_2<\cdots<x_l=y
\]
by the sequence $(i_l,i_{l-1},\cdots,i_1)$ where for each $1\le m\le l$,
$i_m$ records the location in the reduced expression for $y$ of the simple
reflection being removed when passing from $x_m$ to $x_{m-1}$.  We then
need only show that the chain with lexicographically minimal label is the
unique chain with increasing label.  The reduction in
\cite{BjornerA/WachsM:1982} to the existence of increasing chains of length
2 carries over mutatis mutandum, but the proof there for said existence
involves cancellation, and thus applies only to Bruhat order in groups.

We thus need to prove the following.  Suppose $x<y$ with
$\hgt(y)=\hgt(x)+2$, and let $y=s_1\cdots s_kx_0$ as above.  Then there
exists (uniqueness follows as in \cite{BjornerA/WachsM:1982}) a pair $i<j$
such that
\[
x = s_1\cdots\widehat{s_i}\cdots\widehat{s_j}\cdots s_k x_0
<
s_1\cdots\widehat{s_i}\cdots s_k x_0
<
y.
\]
Let $i$ be the minimum index such that
\[
x<s_1\cdots\widehat{s_i}\cdots s_k x_0<y.
\]
Then $x$ is obtained by removing some other simple reflection $s_j$ from
this word.  We claim that $j>i$, and prove this by induction on $\hgt(y)$.

We first observe that $j$ cannot be $1$ (note that this addresses the base
case $\hgt(y)=2$). Indeed, we then see that
\[
x=s_2\cdots\widehat{s_i}\cdots s_kx_0<s_2\cdots s_kx_0<s_1\cdots s_kx_0=y
\]
since the fact that the chosen expression for $y$ was reduced implies that
\[
\hgt(s_2\cdots s_kx_0)=\hgt(y)-1.
\]
In other words, $i$ was not minimal, as we could have take $i=1$.

Now, apply $s_1$ to the chain
\[
x=s_1\cdots\widehat{s_j}\cdots\widehat{s_i}\cdots s_k x_0
<
s_1\cdots\widehat{s_i}\cdots s_k x_0
<
s_1\cdots s_kx_0=y
\]
Each of these expressions is reduced, so removing the $s_1$'s preserves the
ordering, and in particular $s_1x<s_1y$ with $\hgt(s_1y)=\hgt(s_1x)+2$.
Thus by induction, if $i'$ is the corresponding minimal choice for passing
from $s_1y$ to $s_1x$ (retaining the labels from $y$, since only the
relative ordering matters), then there is a unique $j'>i'$ such that we
have the chain
\[
\label{eq:chain}
s_1x = s_2\cdots\widehat{s_{i'}}\cdots\widehat{s_{j'}}\cdots s_k x_0
<
s_2\cdots\widehat{s_{i'}}\cdots s_k x_0
<
s_2\cdots s_k x_0=s_1y.
\]
Since $j'$ is unique, we cannot have $i'=i$, since that would force
$j'=j<i=i'$. 
  On the other hand $i$ would be a legitimate choice for $i'$, and
thus minimality implies $i'<i$.  Thus by the original minimality of $i$,
multiplying {\em this} chain by $s_1$ cannot preserve the ordering.  Again
the fact that our original expression for $y$ was reduced implies that
$\hgt(y)>\hgt(s_1y)$, so the only way to break this chain is to change the
ordering of the first two elements.  But this implies by Lemma
\ref{lem:bruhat} that $s_1$ must {\em swap} the first two elements.  In
other words, the middle element in the chain \eqref{eq:chain} gives an expression for $x$,
and we may thus construct the alternate chain
\[
x=s_2\cdots\widehat{s_{i'}}\cdots s_k x_0
<
s_2\cdots s_k x_0
<
s_1\cdots s_k x_0=y.
\]
But then we could again have chosen $i=1$ in the first place.

The remaining topological statements follow from the above together with
the fact that by our previous calculation of the M\"obius function (i.e.,
Euler characteristic), any interval of length 2 has either one or two
intermediate elements.  In other words, the complex is thin.
\end{proof}
\section{Hecke algebra modules}\label{sec:hecke}

Recall that for a Coxeter system $(W,S)$, the Hecke algebra $H_W(q)$ is the
$\Z[q]$-algebra with generators $T(s)$ for $s\in S$ with relations
\[
(T(s)T(t))^{m(s,t)/2}
=
(T(t)T(s))^{m(s,t)/2},
\]
if $m(s,t)$ is even,
\[
(T(s)T(t))^{(m(s,t)-1)/2} T(s)
=
(T(t)T(s))^{(m(s,t)-1)/2} T(t),
\]
if $m(s,t)$ is odd, and
\[
(T(s)-q)(T(s)+1)=0.
\]
More generally, one can choose parameters $q(s)$ for each $s$ with the
proviso that $q(s)=q(t)$ whenever $s$ and $t$ are conjugate.  However, we
will only consider the case of equal parameters in the sequel.  The Hecke
algebra has a natural basis $T(w)$ for $w\in W$ given by taking a reduced
expression $w=s_1s_2\cdots s_k$, $k=\ell(w)$, and defining
\[
T(w) = T(s_1)T(s_2)\cdots T(s_k);
\]
since the Hecke algebra satisfies the braid relations, this is
well-defined.  Moreover, one finds that in this basis, the generators have
a particularly simple action:
\[
T(s)T(w) = \begin{cases}
T(sw) & \ell(sw)>\ell(w)\\
(q-1)T(w)+qT(sw) & \ell(sw)<\ell(w),
\end{cases}
\label{eq:hecke_left}
\]
and similarly
\[
T(w)T(s) = \begin{cases}
T(ws) & \ell(ws)>\ell(w)\\
(q-1)T(w)+qT(ws) & \ell(ws)<\ell(w).
\end{cases}
\label{eq:hecke_right}
\]
The Hecke algebra $H_W(q)$ has two $1$-dimensional representations: the trivial
$\1_+$
and sign (or alternating) $\1_-$ on which each generator $T(s)$ acts
as $\epsilon_+:=q$, respectively as $\epsilon_-:=-1$. 
We will also refer to the restriction to any subalgebra as its trivial, respectively
sign, representation.

When we specialize to $q=1,$ 
$H_W(1)\cong\Z[W]$,
and thus this is indeed a deformation of the group algebra of $W$.  More
generally, if $W$ is finite, then
$H_W(q)\otimes_{\Z[q]} \C \cong \C[W]$  
 for generic $q \in \C$, as
in that case both algebras are semisimple (since any deformation of a
semisimple algebra is generically semisimple); however, unlike the
isomorphism for $q=1$, this isomorphism does not respect the natural basis.

Any Coxeter homomorphism $\phi:W\to W'$ induces two natural homomorphisms
$\phi_{\pm}$ of the corresponding Hecke algebras, by taking
\[
\phi_{\pm}(T(s)) = 
\begin{cases}
T(\phi(s)) & \phi(s)\in S'\\
\epsilon_{\pm} & \phi(s)=1.
\end{cases}
\]
 Of course if $\phi(S)=S'$, then
$\phi_+=\phi_-$, and we may omit the subscript.  It suffices to check that
the braid relation of length $m' = m'(\phi(s), \phi(t))$ is satisfied,
as this implies the braid
relation of length $m(s,t)$.  Indeed, if one splits the left-hand side of
the the braid relation of length $m(s,t)$ into subwords of length $m'$, and
applies the braid relation of length $m'$ to each such subword, one obtains
the right-hand side of the desired braid relation.

As mentioned in the introduction, our original motivation for introducing
quasiparabolic subgroups was to construct modules for the Hecke algebra of
$W$ naturally deforming permutation representations.  

From one perspective, the deformation problem is trivial (at least for
finite $W$), as for generic $q \in \C$, $H_W(q) \otimes_{\Z[q]} \C$ is semisimple,
and thus each
irreducible representation of $\C[W]$ deforms to an irreducible
representation of $H_W(q) \otimes_{\Z[q]} \C$. 
It follows that any representation deforms:
simply decompose it as a sum of irreducibles, and deform the irreducibles
independently.  This deformation suffers from two significant drawbacks,
however.  Not only are the formulas for the action of the generators on
such a representation quite complicated, but also the coefficients of the
action tend to have denominators, in sharp contrast to the Hecke algebra
itself.

Given a standard parabolic subgroup $W_I$ of $W$, there is a natural
parabolic subalgebra $H_I(q)$ of $H_W(q)$ generated by those $T(s)$, $s \in
I$.  In that case, it also makes sense to induce representations of
$H_I(q)$ to $H_W(q)$, via the functor
$\Ind_{H_{I}(q)}^{H_W(q)} \text{--} := H_W(q) \otimes_{H_I(q)} \text{--}$.
The analogue of the permutation representation of $W$ on $W/W_I$ is the
representation induced from the trivial representation $\1_+$ of $H_I(q)$.
(Theorem \ref{thm:induced_hecke} below gives a quasiparabolic interpretation of slightly more general induced modules.)
This representation has a basis naturally indexed by cosets $W/W_I$, and
the action of the generators on this basis has a combinatorial description
analogous to that given by \eqref{eq:hecke_left}, except that there is an
additional case in which the height does not change.

In general, not all subgroups of $W$ deform to a subalgebra of $H_W(q)$, so
this induction construction cannot be used to deform the corresponding
permutation representation.  The parabolic case suggests a natural
combinatorial action of the generators $T(s)$ in the case of a scaled
$W$-set, but there are consistency conditions that must be satisfied in
order for this to give a homomorphism.  When the scaled $W$-set is
quasiparabolic, these conditions are indeed satisfied, and we have the
following.

\begin{thm}
Let $(X,\hgt)$ be a quasiparabolic $W$-set, and let $T(X)$ be the free
$\Z[q]$-module generated by elements $T(x)$ for $x\in X$.  For $s\in S$,
define endomorphisms $T_{\pm}(s)$ of $T(X)$ by
\[
T_\pm(s)T(x) = \begin{cases}
T(sx) & \hgt(sx)>\hgt(x)\\
\epsilon_{\pm} T(x)  & \hgt(sx)=\hgt(x)\ (\implies sx=x)\\
(q-1) T(x)+qT(sx) & \hgt(sx)<\hgt(x)
\end{cases}
\]
Then the map $T(s)\mapsto T_+(s)$ (respectively $T(s)\mapsto T_-(s)$)
afford $T(X)$ the structure of a $H_W(q)$-module, denoted by
$H^{\pm}_X(q)$.
\end{thm}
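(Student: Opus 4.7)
The plan is to verify that $T(s)\mapsto T_{\pm}(s)$ respects the defining relations of $H_W(q)$. For the quadratic relation $(T_{\pm}(s)-q)(T_{\pm}(s)+1)=0$, I split on the three cases in the definition. If $\hgt(sx)\ne\hgt(x)$, one of the two applications of $T_{\pm}(s)$ uses the height-increasing rule and the other the height-decreasing rule; a direct calculation on $\{x,sx\}$ yields $T_{\pm}(s)^2 T(x)=(q-1)T_{\pm}(s)T(x)+qT(x)$ in either order. When $sx=x$ (the only case where QP1 is implicitly used to justify the definition), the relation reduces to the scalar identity $\epsilon_{\pm}^2=(q-1)\epsilon_{\pm}+q$, which is satisfied by both $\epsilon_+=q$ and $\epsilon_-=-1$.

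For the braid relations, I reduce to a question about dihedral orbits. A relation of the form $(T(s)T(t))^{m(s,t)/2}=(T(t)T(s))^{m(s,t)/2}$ (or its odd-length analogue) involves only $T_{\pm}(s)$ and $T_{\pm}(t)$, so it suffices to check it on the action on each orbit of the standard parabolic subgroup $W_{\{s,t\}}$, a dihedral group of order $2m(s,t)$. By the proposition on restriction of quasiparabolicity to parabolic subgroups, each such orbit is itself a quasiparabolic $W_{\{s,t\}}$-set. Passing to the even double cover eliminates the fixed case $sx=x$ for simple reflections of $W_{\{s,t\}}$ and so collapses the two sign conventions to one. A transitive even quasiparabolic $W_{\{s,t\}}$-orbit bounded from below then has a unique minimal element $x_0$, and its Bruhat poset is tightly controlled by QP2 and Theorem \ref{thm:strong_exch}: the two sequences $sx_0, ts x_0, st s x_0,\ldots$ and $t x_0, s t x_0, t s t x_0,\ldots$ of strictly increasing height can only merge at a common $W_{\{s,t\}}$-maximal element. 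For each such orbit I then verify both sides of the braid relation by induction on the orbit size, the base cases being the classical parabolic ones treated in \cite{HumphreysJE:1990}.

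The main obstacle is that a general $W_{\{s,t\}}$-orbit need not be a coset space of a standard parabolic subgroup of $W_{\{s,t\}}$, so the classical parabolic braid-verification cannot be invoked directly. The critical input is QP2 applied locally inside $W_{\{s,t\}}$: it rules out the only configuration in which the two height-increasing branches above could merge prematurely without the two sides of the braid relation balancing out. Once this local control is in hand — in effect pinning down the shape of the ``diamond'' formed by the orbit — the remaining algebra is a finite inductive check, essentially no harder than the classical dihedral computation, and the identification $(T_{\pm}(s)T_{\pm}(t))^{m/2}T(x)=(T_{\pm}(t)T_{\pm}(s))^{m/2}T(x)$ (with the appropriate modification for odd $m$) follows on each orbit, completing the proof.
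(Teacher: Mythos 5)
Your reduction of the braid relations to orbits of the rank-2 parabolic $W_{\{s,t\}}$, and your three-case check of the quadratic relation, match the paper's opening moves. But the heart of the matter---actually verifying the length-$m(s,t)$ braid relation on each dihedral orbit---is where your argument has a genuine gap. For a transitive $W_{\{s,t\}}$-orbit with minimal element $x_0$, the stabilizer of $x_0$ is, by Lemma \ref{lem:odd_qp_has_simple}, one of $\langle (st)^{m'}\rangle$, $\langle (st)^{m'},s\rangle$, $\langle (st)^{m'},t\rangle$ for some divisor $m'$ of $m=m(s,t)$. The crucial difficulty is when $m'$ is a \emph{proper} divisor of $m$: then the orbit is not a parabolic coset space of $W_{\{s,t\}}$, so your ``classical parabolic base cases'' do not cover it, and ``induction on the orbit size'' does not explain how a braid word of length $m$ is evaluated on a diamond of height only $m'$---the operators wrap around the diamond $m/m'$ times, and tracking the resulting polynomial coefficients is precisely the content to be proved. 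The missing idea, which is the paper's key step, is that $\langle (st)^{m'}\rangle$ is the kernel of the Coxeter quotient homomorphism $I_2(m)\to I_2(m')$, so the orbit is the pullback along that homomorphism of the regular representation of $I_2(m')$ (or, in the odd case, of the classical induced module $\Ind^{H_{I_2(m')}(q)}_{H_{\langle s\rangle}(q)}\1_\pm$). The operators therefore satisfy the braid relation of length $m'$ by the classical computation, and the length-$m$ relation follows formally because a braid relation of length $m'$ implies one of any length divisible by $m'$ (split the length-$m$ word into $m/m'$ blocks and apply the shorter relation to each). Without this quotient step, or an explicit computation replacing it, the proof is incomplete.

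A secondary, repairable issue: your appeal to the even double cover to ``collapse the two sign conventions'' needs an explicit operator-level intertwiner between $T_\pm(s)$ on $T(X)$ and the operators on $T(\evenX)$, since the naive map $T(x)\mapsto T(\evenx)$ fails to intertwine exactly in the fixed case $sx=x$ (there $s\evenx=s_0\evenx\ne\evenx$); the correct candidate is $T(x)\mapsto T(s_0\evenx)-\epsilon_\mp T(\evenx)$, as in Proposition \ref{prop:even_module}. The paper sidesteps this entirely by handling odd stabilizers directly through the induced-module identification $T(X)\cong \Ind^{H_W(q)}_{H_{\langle s\rangle}(q)}\1_\pm$ after the quotient reduction, which is cleaner; if you prefer the double-cover route you must supply the intertwining computation before invoking it.
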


\begin{rem}
If $X$ has even stabilizers, then $H^+_X(q)$ and $H^-_X(q)$ are naturally
isomorphic, so in this case we may omit the sign.
\end{rem}

\begin{proof}
We need simply prove that $T_\pm(s)$ satisfy the relations of the Hecke
algebra.  The quadratic relation is straightforward (simply check the three
cases), so it remains to check the braid relations.  Since a braid relation
only involves two simple reflections (say $s$ and $t$), we may restrict $X$
to the corresponding rank 2 parabolic subgroup of $W$, or equivalently, may
assume that $W$ has rank 2.  We may also assume that $X$ is transitive, as
$H^\pm_X(q)$ is clearly a direct sum over orbits.

If $|W|=\infty$, there is nothing to check (as there is no braid relation
in that case).  Otherwise $X$ has a minimal element, and by Lemma
\ref{lem:odd_qp_has_simple}, the stabilizer of that element has the form
\[
\langle (st)^{m'}\rangle,\ 
\langle (st)^{m'},s\rangle,\ 
\langle (st)^{m'},t\rangle
\]
for some $m'$ dividing $m(s,t)$.  We may thus (replacing $W$ by a quotient
as necessary and using the fact that Coxeter homomorphisms extend to
Hecke algebras) reduce to the case that the stabilizer of the minimal
element is trivial or generated by a simple reflection.

If the stabilizer is trivial, this is simply the regular representation of
$H_W(q)$, so the result is immediate.  Otherwise, say the stabilizer is
$\langle s\rangle$.  Then there is a natural isomorphism
\[
T(X)\cong H_W(q) (T(s)-\epsilon_{\mp})
\cong
\Ind^{H_W(q)}_{H_{\langle s\rangle}(q)} \1_\pm
,
\]
taking
\[
T(wx)\mapsto T(w) (T(s)-\epsilon_{\mp})
\]
whenever $\hgt(wx)=\ell(w)+\hgt(x)$, and this respects the action of
$T_{\pm}(s)$.
\end{proof}

\begin{rems}
Note that even if $s$ and $t$ are not conjugate, there can be orbits with
$m'$ odd, forcing the corresponding parameters $q(s)$, $q(t)$ to agree;
this is the reason for our simplifying assumption $q(s)\equiv q$.
\end{rems}

\begin{rems}
In fact, in order for this to extend to a homomorphism, it suffices that
the restriction of $X$ to any rank 2 subgroup of $W$ be quasiparabolic.
Experimentally, this weaker condition is very nearly the same as
quasiparabolicity (see Example \ref{eg:bozo} below), but fails to be
preserved by many of the constructions above, especially Theorem
\ref{thm:quotient} and its corollaries.
\end{rems}

Taking $q=0$ gives the following combinatorial fact.  Define the Hecke
monoid of a Coxeter group $W$ to be the quotient $M(W)$ of the free monoid
on idempotent generators $M(s)$, $s\in S$ by the braid relations.

\begin{cor}
There is a natural action of the Hecke monoid $M(W)$ on $X$ in which the
generators act by
\[
M(s)x = \max\{x,sx\}.
\]
\end{cor}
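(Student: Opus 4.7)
The plan is to derive this monoid action by specializing the Hecke algebra module $H^{-}_{X}(q)$ from the preceding theorem at $q=0$.  Define $P(s):X\to X$ by $P(s)(x):=\max\{x,sx\}$; this makes sense because $x$ and $sx$ are always Bruhat-comparable, by the definition of Bruhat order applied to the reflection $s\in R(W)$.  The claim is then that $M(s)\mapsto P(s)$ extends to a monoid homomorphism from $M(W)$ to the transformation monoid of $X$.

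Idempotence is immediate: if $y=P(s)(x)=\max\{x,sx\}$, then $sy=\min\{x,sx\}\le y$, so $P(s)(y)=\max\{y,sy\}=y$.  For the braid relations I would work inside $T(X)$ evaluated at $q=0$.  Unwinding the three cases in the definition of $T_{-}(s)$ (with $\epsilon_{-}=-1$) at $q=0$ shows that
\[
-T_{-}(s)\,T(x)=\sigma_{s}(x)\,T(P(s)(x))
\]
for some sign $\sigma_{s}(x)\in\{\pm 1\}$: the middle and bottom cases both give $+T(P(s)(x))$, while the top case contributes $-T(P(s)(x))$.  Since the braid relations transfer from $T_{-}$ to $-T_{-}$ (the total count of sign changes on each side of the braid relation agrees and hence cancels, whether $m(s,t)$ is even or odd), applying such a braid relation to $T(x)$ yields an identity of the form $\pm T(y_{L})=\pm T(y_{R})$, where $y_{L}=P(s)P(t)\cdots(x)$ and $y_{R}=P(t)P(s)\cdots(x)$ with $m(s,t)$ factors on each side.

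Because $T(X)|_{q=0}$ is a free $\Z$-module on $\{T(x):x\in X\}$, and in particular is torsion-free, any equality of the form $\pm T(y_{L})=\pm T(y_{R})$ forces $y_{L}=y_{R}$; this gives the braid relation for the operators $P(s)$ on $X$, which together with idempotence yields the claimed monoid action.  The only delicate point is the sign bookkeeping when passing braid relations from $T_{-}$ to $-T_{-}$, but this is more a calculation than an obstacle, since both sides of each braid relation contain the same number of factors and thus acquire the same global sign.
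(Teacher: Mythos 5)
Your proof is correct and follows essentially the same route as the paper, which also obtains the monoid action by specializing the module $H^-_X(q)$ at $q=0$ ``apart from some sign changes.'' You have merely made explicit the sign bookkeeping and the freeness argument that the paper leaves implicit.
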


\begin{proof}
We observe that apart from some sign changes, this is precisely the action
of the generators of $H_W(0)$ on $H^-_X(0)$.
\end{proof}

\begin{rem}
This action is compatible with Bruhat order in the sense of Richardson and
Springer \cite{RichardsonRW/SpringerTA:1990}.
\end{rem}

The construction in the case of a dihedral group with odd stabilizers
generalizes as follows.

\begin{prop}\label{prop:even_module}
Let $\evenX$ be the even double cover of $X$.  Then
\[
H^\pm_X(q) \cong H_{\evenX}(q) (T(s_0)-\epsilon_{\mp}),
\]
where $s_0$ generates the factor $A_1$ acting on $\evenX$.  Over
$\Z[q,1/(q+1)]$, there is a natural isomorphism of $H_q(W)$-modules
\[
H_{\evenX}(q) \cong H^+_X(q)\oplus H^-_X(q).
\]
\end{prop}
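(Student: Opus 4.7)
The plan is to construct the first isomorphism explicitly via the $\Z[q]$-linear map $\phi_\pm\colon H^\pm_X(q)\to H_{\evenX}(q)(T(s_0)-\epsilon_\mp)$ given by $T(x)\mapsto (T(s_0)-\epsilon_\mp)T(\even x)$, and then to derive the second isomorphism by introducing orthogonal idempotents in the subalgebra generated by $T(s_0)$ to split $H_{\evenX}(q)$ as an $H_W(q)$-module.

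For the first isomorphism, the basis of $H_{\evenX}(q)$ is $\{T(\even x),T(s_0\even x)\}_{x\in X}$, with $T(s_0)T(\even x)=T(s_0\even x)$ since $s_0\even x$ covers $\even x$ in the $A_1$-direction. The quadratic relation $T(s_0)^2=(q-1)T(s_0)+q$ gives $(T(s_0)-\epsilon_\mp)T(s_0)=\epsilon_\pm(T(s_0)-\epsilon_\mp)$, so the submodule is freely spanned over $\Z[q]$ by the elements $v(x):=(T(s_0)-\epsilon_\mp)T(\even x)=T(s_0\even x)-\epsilon_\mp T(\even x)$: these are manifestly linearly independent, and they span because $(T(s_0)-\epsilon_\mp)T(s_0\even x)=\epsilon_\pm v(x)$. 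Hence $\phi_\pm$ is a $\Z[q]$-module isomorphism.

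Next I would verify that $\phi_\pm$ intertwines the $T(s)$-actions for $s\in S$. Since $s_0$ lies in the commuting $A_1$ factor, $T(s_0)$ commutes with $T(s)$, so this reduces to computing $T(s)T(\even x)$ in $H_{\evenX}(q)$ and comparing with the formula for $T_\pm(s)T(x)$. This is a case analysis on $\hgt(sx)$ versus $\hgt(x)$. When $\hgt(sx)>\hgt(x)$ one has $s\even x=\even{sx}$, giving $T(s)v(x)=v(sx)$. When $\hgt(sx)<\hgt(x)$, again $s\even x=\even{sx}$ (since $\hgt(x)+1\equiv\hgt(sx)\pmod 2$), giving $T(s)v(x)=(q-1)v(x)+qv(sx)$. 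When $sx=x$, instead $s\even x=s_0\even x$, and the eigenvalue calculation above yields $T(s)v(x)=\epsilon_\pm v(x)$, matching the scalar action on a stabilized element.

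For the second isomorphism, set $e_\pm:=(T(s_0)-\epsilon_\mp)/(\epsilon_\pm-\epsilon_\mp)\in H_{W\times A_1}(q)\otimes_{\Z[q]}\Z[q,1/(q+1)]$, noting that $\epsilon_+-\epsilon_-=q+1$. The quadratic relation immediately implies that $e_+$ and $e_-$ are orthogonal idempotents summing to $1$. Because they commute with $H_W(q)$, the identity $1=e_++e_-$ decomposes $H_{\evenX}(q)$ as a direct sum of $H_W(q)$-submodules $e_+H_{\evenX}(q)\oplus e_-H_{\evenX}(q)$. Over $\Z[q,1/(q+1)]$ each summand coincides with $H_{\evenX}(q)(T(s_0)-\epsilon_\mp)$, which by the first part is isomorphic to $H^\pm_X(q)$; combining gives the desired identification.

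The main obstacle I expect is bookkeeping in the third case of the compatibility check, where one must recognize that $s\even x=\even{sx}$ inside $\evenX=X\times\F_2$ even though the second coordinate naively reads $\hgt(x)+1$ rather than $\hgt(sx)=\hgt(x)-1$ — these agree modulo $2$, which is precisely why the even double cover was introduced. Once this identification is made, every remaining step is a direct application of the quadratic relation for $T(s_0)$ together with the fact that $s_0$ commutes with $S$.
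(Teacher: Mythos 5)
Your argument is correct, and it is essentially the proof the paper intends: the proposition is stated without proof as the generalization of the computation $T(X)\cong H_W(q)(T(s)-\epsilon_{\mp})$ appearing in the dihedral case of the preceding theorem, and your map $T(x)\mapsto (T(s_0)-\epsilon_{\mp})T(\even x)$, the eigenvalue identity $(T(s_0)-\epsilon_{\mp})T(s_0)=\epsilon_{\pm}(T(s_0)-\epsilon_{\mp})$, the three-case intertwining check, and the idempotents $e_{\pm}=(T(s_0)-\epsilon_{\mp})/(\epsilon_{\pm}-\epsilon_{\mp})$ over $\Z[q,1/(q+1)]$ are exactly the expected ingredients. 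All steps check out, including the key observation that $s\even{x}=\even{sx}$ when $\hgt(sx)\ne\hgt(x)$ while $s\even{x}=s_0\even{x}$ when $sx=x$.
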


Specializing to $q=1$, we find that $H^+_X(1)$ is simply the permutation
representation associated to $X$, while $H^-_X(1)$ is the tensor product of
this with the sign representation of $W$.

Recall the following notation.
If $M$ is an $H_W(q)$-module and $N$ is an $H_{W'}(q)$-module with both
$M$ and $N$ free over $\Z[q]$, then their tensor product, which carries the
obvious $H_W(q) \otimes H_{W'}(q)$-module structure,  is denoted $M \boxtimes N$.
Note furthermore that $H_W(q) \otimes_{\Z[q]} H_{W'}(q) \cong H_{W \times W'} (q)$.

\begin{prop}
If $(X, \hgt)$ and $(X', \hgt')$ are quasiparabolic $W$- and $W'-$sets respectively, then
there are natural isomorphisms
\begin{align}
H^+_X(q)\boxtimes H^+_{X'}(q)&\cong H^+_{X\times X'}(q)\\
H^-_X(q)\boxtimes H^-_{X'}(q)&\cong H^-_{X\times X'}(q).
\end{align}
\end{prop}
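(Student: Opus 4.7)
The plan is to define the isomorphism explicitly on basis elements and verify compatibility case-by-case with the Hecke algebra action. Specifically, I would define
\[
\phi: H^{\pm}_X(q)\boxtimes H^{\pm}_{X'}(q) \to H^{\pm}_{X\times X'}(q),\qquad T(x)\otimes T(x')\mapsto T(x,x').
\]
Since $\{T(x)\otimes T(x')\}_{(x,x')\in X\times X'}$ is a $\Z[q]$-basis of the tensor product and $\{T(x,x')\}$ is a $\Z[q]$-basis of $H^{\pm}_{X\times X'}(q)$, the map $\phi$ is manifestly a $\Z[q]$-linear isomorphism. What must be checked is that $\phi$ is an $H_{W\times W'}(q)$-module map.

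Under the identification $H_W(q)\otimes_{\Z[q]} H_{W'}(q)\cong H_{W\times W'}(q)$, the generators of $H_{W\times W'}(q)$ are $T(s)\otimes 1$ for $s\in S$ and $1\otimes T(s')$ for $s'\in S'$, corresponding to the simple reflections $(s,1)$ and $(1,s')$ of $W\times W'$. By symmetry it suffices to handle the first kind. For $s\in S$, the key observation is that the product height function satisfies
\[
(\hgt\times \hgt')((s,1)(x,x')) - (\hgt\times \hgt')(x,x') = \hgt(sx)-\hgt(x),
\]
so the trichotomy (increase, stay, decrease) governing the action of $T_\pm((s,1))$ on $T(x,x')$ is exactly the same trichotomy that governs the action of $T_\pm(s)\otimes 1$ on $T(x)\otimes T(x')$. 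Case-by-case: if $\hgt(sx)>\hgt(x)$, both sides produce $T(sx,x')$; if $\hgt(sx)<\hgt(x)$, both produce $(q-1)T(x,x')+qT(sx,x')$; if $\hgt(sx)=\hgt(x)$, then $sx=x$ forces $(s,1)(x,x')=(x,x')$ and both produce $\epsilon_{\pm}T(x,x')$. The sign $\epsilon_{\pm}$ appearing on each side matches because we take the same sign convention throughout (hence the restriction to matched-sign tensor products).

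There is really no substantive obstacle here: once one observes that the height on the product is additive and that a simple reflection in $W\times W'$ acts on only one factor, everything reduces to the definition of $T_\pm$. The only mild subtlety worth noting is the QP1 case $sx=x$, where one must use the implication (automatic from QP1) that the same formula applies on the product side since $(s,1)$ fixes $(x,x')$ iff $s$ fixes $x$. The mixed sign versions $H^+_X\boxtimes H^-_{X'}$ are not covered by this construction because one would need the simple reflection action on the stabilized elements to carry the correct sign in each factor independently; the statement as given matches the signs, and that is what the construction produces.
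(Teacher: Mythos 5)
Your verification is correct and is exactly the routine check the paper has in mind: the paper states this proposition without proof, treating it as immediate from the additivity of the product height function and the fact that each simple reflection of $W\times W'$ moves only one factor. Your case analysis (including the matched-sign observation in the QP1 case) is precisely the argument that the authors omit.
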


\begin{prop}
Let $(X,\hgt)=(W,\ell)$, viewed as a $W\times W$-set.  Then there is a
natural $H_W(q)\otimes H_W(q)$-module isomorphism
\[
H_X(q)\cong H_W(q).
\]
\end{prop}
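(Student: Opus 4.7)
The plan is to take the obvious candidate map $\phi:H_X(q)\to H_W(q)$ defined on the basis by $T(x)\mapsto T(x)$, where we identify $X=W$ as sets. This is tautologically a $\Z[q]$-module isomorphism, so the entire content of the proposition is that $\phi$ intertwines the two $H_W(q)\otimes H_W(q)$-module structures. Note that $(X,\ell)=(W,\ell)$ is even (each simple reflection in $W\times W$ changes length by exactly $1$), so the middle case of the defining formula does not occur and there is no $\pm$ distinction.

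Before matching actions, I need to equip the target $H_W(q)$ with an $H_W(q)\otimes H_W(q)$-module structure. The naive formula $(a\otimes b)\cdot m:=amb$ fails to respect the multiplication on the second factor, so the standard fix is to compose right multiplication with an anti-automorphism $\iota$ of $H_W(q)$ and set
\[
(a\otimes b)\cdot m \;:=\; a\,m\,\iota(b).
\]
I take $\iota$ to be the anti-automorphism fixing every generator $T(s)$; this is well-defined because both the braid relations $T(s)T(t)T(s)\cdots=T(t)T(s)T(t)\cdots$ and the quadratic relation $(T(s)-q)(T(s)+1)=0$ are preserved under reversal of products, and one immediately gets $\iota(T(w))=T(w^{-1})$ by reversing any reduced word.

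With this structure in hand, checking that $\phi$ is an $H_W(q)\otimes H_W(q)$-module homomorphism reduces to comparing the action of the generators $T(s)\otimes 1$ and $1\otimes T(s)$ on each side. On $H_X(q)$, $T(s)\otimes 1$ acts through $(s,1)\cdot w=sw$, producing $T(sw)$ or $(q-1)T(w)+qT(sw)$ according to whether $\ell(sw)>\ell(w)$ or $\ell(sw)<\ell(w)$; on $H_W(q)$ it acts by left multiplication by $T(s)$, and the two formulas agree by \eqref{eq:hecke_left}. Similarly, $1\otimes T(s)$ acts on $H_X(q)$ through $(1,s)\cdot w=w s^{-1}=ws$, while on $H_W(q)$ it acts by right multiplication by $\iota(T(s))=T(s)$, and the two formulas agree by \eqref{eq:hecke_right}.

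The only genuinely nontrivial step is the anti-automorphism trick needed to convert the natural right-multiplication action of the second copy of $H_W(q)$ into a bona fide left action of $H_W(q)\otimes H_W(q)$; once $\iota$ is in place, the verification is a mechanical comparison of the defining formulas, and the matching occurs term-by-term because $X$ is even and the defining $W\times W$-action on $W$ has been rigged (via the inverse on the second factor) to make right multiplication correspond to the height-increasing/decreasing cases of the Hecke module action.
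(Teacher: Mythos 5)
Your proposal is correct and follows exactly the route the paper intends: the paper states this proposition without proof, but its accompanying remark (``we identify $H_W(q)^{\mathrm{op}}$ with $H_W(q)$ via the involution $T(w)\mapsto T(w^{-1})$'') is precisely the anti-automorphism device you introduce, after which the verification is the same term-by-term comparison of the quasiparabolic module formulas with \eqref{eq:hecke_left} and \eqref{eq:hecke_right}.
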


\begin{rem}
Here we identify $H_W(q)^{\text{op}}$ with $H_W(q)$ via the involution
$T(w)\mapsto T(w^{-1})$.
\end{rem}

It is not clear how to interpret the construction of Theorem
\ref{thm:quotient} in the Hecke algebra setting.  However, the special case
of a quasiparabolic subgroup of a parabolic subgroup is straightforward.

\begin{thm}\label{thm:induced_hecke}
Let $H\subset W_I$ be a quasiparabolic subgroup of the parabolic subgroup
$W_I\subset W$.  Then there is a natural isomorphism
\[
\Ind_{H_{I}(q)}^{H_W(q)} H_{W_I/H}(q)\cong H_{W/H}(q).
\]
\end{thm}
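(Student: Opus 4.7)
The plan is to construct the isomorphism explicitly on bases using the set-level decomposition $W/H \cong W^I \times (W_I/H)$ from the remarks after Corollary~\ref{cor:induce_from_para}, and then verify compatibility with the left action of each generator $T(s)$ by a small case analysis. By those remarks, each coset $wH \in W/H$ has a unique decomposition $wH = u \cdot vH$ with $u \in W^I$ and $vH \in W_I/H$, and moreover $\hgt(wH) = \ell(u) + \hgt(vH)$. Since $H_W(q)$ is free as a right $H_I(q)$-module with basis $\{T(u) : u \in W^I\}$, the elements $\{T(u) \otimes T(vH) : u \in W^I,\ vH \in W_I/H\}$ form a $\Z[q]$-basis of the induced module, so I can define the candidate map $\phi$ by $T(u) \otimes T(vH) \mapsto T(uvH)$ and it is manifestly a $\Z[q]$-linear bijection.

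The content is in showing $\phi$ intertwines the left action of $T(s)$ for $s \in S$. Fix $u \in W^I$ and $vH \in W_I/H$, and divide into cases by the standard analysis of $s$ acting on a minimal coset representative: (A) $\ell(su) < \ell(u)$, in which case $su \in W^I$ with $\ell(su) = \ell(u)-1$; (B1) $\ell(su) > \ell(u)$ with $su \in W^I$; and (B2) $\ell(su) > \ell(u)$ with $su = ut$ for a unique $t \in I$. The trichotomy follows from the usual lifting argument via the strong exchange condition in $W$. In cases (A) and (B1), $\phi(T(s)T(u) \otimes T(vH))$ reduces immediately to $T(s) \cdot T(uvH)$ once one notes that $\hgt(suvH) = \ell(su) + \hgt(vH) = \hgt(uvH) \pm 1$, so all three defining cases of the Hecke action on $H_{W/H}(q)$ match.

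Case (B2) is the one that actually uses the inductive structure: here $T(s) T(u) = T(u) T(t)$, so $T(s) \cdot (T(u) \otimes T(vH)) = T(u) \otimes T(t) \cdot T(vH)$, and the action of $T(t)$ inside $H_{W_I/H}(q)$ itself splits into three subcases depending on the comparison of $\hgt(tvH)$ with $\hgt(vH)$. In each subcase, applying $\phi$ and using $\hgt(u \cdot t vH) = \ell(u) + \hgt(tvH)$ together with $utvH = suvH$ shows that the answer coincides with the corresponding formula for $T(s) \cdot T(uvH) = T(s) \cdot T(suvH / s)$ computed directly in $H_{W/H}(q)$. The same argument works verbatim for either sign convention, so no separate treatment is needed.

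The main obstacle is keeping track of signs and heights in Case (B2): one must confirm that the height identity $\hgt(uvH) = \ell(u) + \hgt(vH)$ continues to hold when $vH$ is replaced by $tvH$ (which may no longer be $W_I$-minimal in its $H$-coset), so that the image under $\phi$ is correctly $T(u \cdot tvH)$ with height $\ell(u) + \hgt(tvH)$. But this is exactly the content of the decomposition statement from the remarks after Corollary~\ref{cor:induce_from_para}, applied to the coset $u \cdot tvH$: the decomposition is a set-level bijection, not restricted to minimal $vH$, which makes the three subcases of (B2) collapse to exactly the three defining subcases of the Hecke action on $T(uvH)$.
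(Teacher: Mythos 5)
Your proposal is correct, and it uses the same decomposition and the same candidate map as the paper: the basis $T(u)\otimes T(vH)$ indexed by $W^I\times W_I/H$, the assignment $T(u)\otimes T(vH)\mapsto T(uvH)$, and bijectivity via the unique decomposition $wH=uvH$ with $\hgt(wH)=\ell(u)+\hgt(vH)$ from the remarks after Corollary~\ref{cor:induce_from_para}. Where you diverge is in how the intertwining property is established. The paper gets the module homomorphism for free: the inclusion $H_{W_I/H}(q)\subset H_{W/H}(q)$ of $H_I(q)$-modules induces, by the left adjointness of induction to restriction, a map $T(w)\otimes T(vH)\mapsto T(w)T(vH)$ on the induced module, which is automatically $H_W(q)$-linear; one then only has to evaluate it on the basis, where $T(u)T(vH)=T(uvH)$ by the height identity. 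You instead verify $H_W(q)$-linearity by hand via Deodhar's trichotomy for $su$ with $u\in W^I$, reducing case (B2) to the three defining cases of the $H_I(q)$-action on $H_{W_I/H}(q)$. Your route is more elementary and self-contained (it does not require observing that $W_I/H$ sits inside the restriction of $W/H$ as a height-preserving $H_I(q)$-submodule, nor invoking Frobenius reciprocity), at the cost of the case analysis and the height bookkeeping in case (B2) --- which you correctly justify by noting that the identity $\hgt(u\cdot tvH)=\ell(u)+\hgt(tvH)$ holds for \emph{every} pair in $W^I\times W_I/H$, not just the pair arising from a minimal representative. Both arguments ultimately rest on exactly that remark, so the proofs are close in substance even though the mechanism for linearity differs.
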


\begin{proof}
  It follows from standard results on induction of Hecke algebra modules
  from parabolic subalgebras that the induced module has a basis over
  $\Z[q]$ of the form $T(u)\otimes T(vH)$ with $(u,vH)$ ranging over
  $W^I\times W_I/H$.  The desired isomorphism is then given by
\[
T(u)\otimes T(vH) \mapsto T(uvH).
\]
This is clearly an isomorphism of free $\Z[q]$-modules, so it remains only
to show that it is a homomorphism of $H_W(q)$-modules.  Observe that there
is in general a homomorphism
\[
T(w)\otimes T(vH)\mapsto T(w)T(vH)
\]
obtained via the left adjointness of induction to restriction from the
natural inclusion $H_{W_I/H}(q)\subset H_{W/H}(q)$ of $H_{I}(q)$-modules.
When $w=u\in W^I$, this map takes
\[
T(u)\otimes T(vH)\mapsto T(u)T(vH)=T(uvH)
\]
as required.  (For the second equality (i.e., the fact that
$\hgt(uvH)=\ell(u)=\hgt(vH)$), see Remark 1 following Corollary
\ref{cor:induce_from_para}.)
\end{proof}

One nice property of our Hecke algebras modules is the existence of a
natural symmetric bilinear 
form.  Define a pairing on $T(X)$ by
\[
\langle T(x),T(y)\rangle = \delta_{xy} q^{\hgt(x)}
\]
and extending linearly.

\begin{prop}
The linear transformations $T_\pm(s)$ are self-adjoint with respect to this
pairing.
\end{prop}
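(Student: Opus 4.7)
The plan is to verify self-adjointness directly on the basis $\{T(x)\}_{x\in X}$. Two observations simplify matters immediately: the bilinear form is symmetric and diagonal in this basis, and $T_\pm(s)T(x)$ lies in the two-element span $\Z[q]\{T(x),T(sx)\}$ for every $x$. Consequently both $\langle T_\pm(s)T(x),T(y)\rangle$ and $\langle T(x),T_\pm(s)T(y)\rangle$ vanish unless $y\in\{x,sx\}$, so only these two cases need to be checked.

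For $y=x$, self-adjointness reads $\langle T_\pm(s)T(x),T(x)\rangle=\langle T(x),T_\pm(s)T(x)\rangle$, which is automatic from the symmetry of the form, regardless of which of the three defining subcases of $T_\pm(s)$ applies. The only substantive case is $y=sx\ne x$. Using symmetry of the form to swap $x$ and $y$, I may assume $\hgt(sx)>\hgt(x)$, and since heights change by at most $1$ this forces $\hgt(sx)=\hgt(x)+1$. Then $T_\pm(s)T(x)=T(sx)$, so
\[
\langle T_\pm(s)T(x),T(sx)\rangle = q^{\hgt(sx)} = q^{\hgt(x)+1},
\]
while $T_\pm(s)T(sx)=(q-1)T(sx)+qT(x)$ gives
\[
\langle T(x),T_\pm(s)T(sx)\rangle = q\cdot q^{\hgt(x)} = q^{\hgt(x)+1},
\]
and the two agree.

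There is no real obstacle here: the computation is purely mechanical, and the sign $\epsilon_\pm$ enters only in the subcase $sx=x$, which is absorbed into the trivially-handled branch $y=x$. The one place quasiparabolicity is implicitly invoked is property QP1, which guarantees that the three cases in the definition of $T_\pm(s)$ are mutually exclusive (so that $\hgt(sx)=\hgt(x)$ indeed forces $sx=x$, making $\epsilon_\pm T(x)$ a well-defined scalar multiple of $T(x)$). The argument therefore proceeds uniformly for both choices of sign.
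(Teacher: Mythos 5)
Your proof is correct and follows essentially the same route as the paper's: reduce to the case $y=sx\ne x$ with $\hgt(sx)=\hgt(x)+1$ and compute both sides to be $q^{\hgt(x)+1}$. The additional remarks about symmetry of the form and the role of QP1 are accurate but not needed beyond what the paper already does.
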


\begin{proof}
We simply need to verify that
\[
\langle T_{\pm}(s)T(x),T(y)\rangle
=
\langle T(x),T_\pm(s)T(y)\rangle.
\]
If $\{x,sx\}\ne \{y,sy\}$, then both sides are 0,
and if $y=x$ the claim is obvious by symmetry.  We thus reduce to the case
$y=sx\ne x$.  Moreover, we may assume $\hgt(sx)=\hgt(x)+1$, as we can
otherwise exchange $x$ and $y$.  But then
\[
\langle T(s)T(x),T(sx)\rangle
=
q^{\hgt(x)+1}
\]
while
\[
\langle T(x),T(s)T(sx)\rangle
=
\langle T(x),q T(x)+(q-1)T(sx)\rangle
=
q q^{\hgt(x)}.
\]
\end{proof}

In particular, in the case $X=W$, this is just the usual invariant inner
product on the Hecke algebra, namely $\langle T(w),T(w')\rangle =
\delta_{ww'} q^{\ell(w)}$.

\begin{thm}
Let $W$ be a finite Coxeter group, and let $(X, \hgt)$ be a transitive
quasiparabolic $W$-set with minimal element $x_0$.  Then over $\Z[q,q^{-1}]$,
there is a natural injection
\[
H^{\pm}_X(q)\to H_W(q)
\]
of Hecke algebra modules.
\end{thm}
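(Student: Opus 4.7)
The plan is to exhibit the desired injection as the adjoint of the natural surjection $\pi : H_W(q) \to H^{\pm}_X(q)$, $\pi(h) := h \cdot T(x_0)$, taken with respect to the invariant bilinear forms on both modules. Surjectivity of $\pi$ is immediate: since $X$ is transitive, every $x \in X$ admits a reduced expression $x = s_1 \cdots s_k x_0$, and then $T(s_1) \cdots T(s_k) \cdot T(x_0) = T(x)$, because the height strictly increases at each step of a reduced expression. Thus $H^{\pm}_X(q)$ is cyclic on $T(x_0)$.

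On $H^{\pm}_X(q)$ I would use the bilinear form introduced in the preceding proposition, and on $H_W(q)$ its specialization $\langle T(w), T(w') \rangle = \delta_{ww'} q^{\ell(w)}$ to the case $X = W$. Both Gram matrices are diagonal with entries of the form $q^{k}$, hence nondegenerate over $\Z[q,q^{-1}]$, and each $T(s)$ is self-adjoint on both sides. I then define
\[
\phi: H^{\pm}_X(q) \to H_W(q), \qquad \phi(v) := \sum_{w \in W} q^{-\ell(w)} \langle T(w) \cdot T(x_0), v \rangle \, T(w),
\]
a finite sum since $W$ is finite; the factor $q^{-\ell(w)}$ is precisely what forces the passage from $\Z[q]$ to $\Z[q,q^{-1}]$. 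By construction, $\phi$ is the unique $\Z[q,q^{-1}]$-linear map characterized by the identity $\langle \phi(v), h\rangle_{H_W} = \langle v, \pi(h)\rangle_{H^{\pm}_X}$ for all $h \in H_W(q)$ and $v \in H^{\pm}_X(q)$.

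To verify that $\phi$ is $H_W(q)$-equivariant it suffices to check commutation with each generator $T(s)$, which follows in one line from self-adjointness on both sides combined with the fact that $\pi$ is a module map: for all $h$,
\[
\langle \phi(T(s)v), h\rangle = \langle v, \pi(T(s)h)\rangle = \langle \phi(v), T(s)h\rangle = \langle T(s)\phi(v), h\rangle,
\]
so nondegeneracy of the form on $H_W(q)$ gives $\phi(T(s)v) = T(s)\phi(v)$. For injectivity, if $\phi(v) = 0$ then $\langle T(w) \cdot T(x_0), v\rangle = 0$ for every $w \in W$; cyclicity (as established in the first paragraph) shows this set of vectors contains every basis element $T(x)$ of $H^{\pm}_X(q)$, so nondegeneracy of the form on $H^{\pm}_X(q)$ then forces $v = 0$.

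The main conceptual point is that this approach replaces the parabolic-induction construction of an embedding (unavailable here since $\Stab_W(x_0)$ generally does not deform to a subalgebra of $H_W(q)$) by dualization of the canonical cyclic presentation, using the self-duality provided by the invariant form. No serious obstacle arises once this dualization strategy is adopted; the only slightly delicate point is the need to invert $q$ to make the Gram matrices invertible, but this is already built into the statement of the theorem.
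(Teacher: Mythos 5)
Your proposal is correct and is exactly the paper's argument: the paper's entire proof reads ``Simply take the adjoint of the surjection $H_W(q)\to H^{\pm}_X(q)$ given by $T(w)\mapsto T(w)T(x_0)$,'' and you have supplied precisely the details it leaves implicit (surjectivity via reduced expressions from transitivity, equivariance from self-adjointness of the $T(s)$ on both sides, injectivity from nondegeneracy of the forms over $\Z[q,q^{-1}]$). Nothing further is needed.
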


\begin{proof}
Simply take the adjoint of the surjection
\[
H_W(q)\to H^{\pm}_X(q)
\]
given by
\[
T(w)\mapsto T(w)T(x_0).
\]
\end{proof}

In the case $X$ is transitive and bounded from below, the corresponding
Hecke algebra modules are cyclic, generated by $T(x_0)$, where $x_0$ is the
minimal element of $X$.  One can thus express $H^{\pm}_X(q)$ as a quotient
$H_W(q)/I_X$, for some left ideal $I_X$, the annihilator of $T(x_0)$. 
It is therefore natural to ask
whether we can give a nice set of generators for $I_X$.  By Proposition
\ref{prop:even_module}, one can essentially reduce to the case $X$ even;
the ideal $I_X$ can be obtained from the ideal $I_{\evenX}\subset H_W(q)$
by adding one generator of the form $T(s)-\epsilon_{\mp}$, $s \in S$.
\begin{lem}
Suppose $H$ is an even quasiparabolic subgroup of $W$.  The ideal $I_{W/H}$
is generated by elements of the form $T(w)-T(w')$ where $wH=w'H$ and
$\hgt(wH)=\ell(w)=\ell(w')$.
\end{lem}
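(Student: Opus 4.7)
The plan is to prove $J=I_{W/H}$ by showing that the obvious surjection $H_W(q)/J \twoheadrightarrow H_{W/H}(q)$ induced by $T(w)\mapsto T(w)T(H)$ is in fact an isomorphism. First note $J\subseteq I_{W/H}$: if $\ell(w)=\hgt(wH)$ and $w=s_1\cdots s_k$ is reduced, then since $H$ is even the sequence $\hgt(s_i\cdots s_k H)$ changes by $\pm 1$ at each step and ends at $k$, so every step is an increase; unwinding the formula for the $H_W(q)$-action then gives $T(w)T(H)=T(wH)$, so each generator $T(w)-T(w')$ annihilates $T(H)$.

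For each coset $c\in W/H$, pick a minimum-length representative $u_c\in W$; by the defining relations of $J$, the residue $\bar T(u_c)\in H_W(q)/J$ is independent of this choice. Let $N\subseteq H_W(q)/J$ denote the $\Z[q]$-span of $\{\bar T(u_c):c\in W/H\}$. I plan to show $T(s)\cdot N\subseteq N$ for every $s\in S$; combined with the cyclicity of $H_W(q)/J$ (generated by $\bar T(u_H)=\bar 1$), this yields $N=H_W(q)/J$. The quotient map sends $\bar T(u_c)\mapsto T(c)$, and the $T(c)$ form a $\Z[q]$-basis of $H_{W/H}(q)$, so the $\bar T(u_c)$ must already be $\Z[q]$-independent and the induced map $N\to H_{W/H}(q)$ is the required isomorphism of $H_W(q)$-modules.

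The heart of the argument, and the main obstacle, is verifying $T(s)\cdot\bar T(u_c)\in N$. When $\hgt(sc)>\hgt(c)$, any choice of $u_c$ satisfies $\ell(su_c)\ge\hgt(sc)=\hgt(c)+1>\ell(u_c)$, forcing $\ell(su_c)=\ell(u_c)+1=\hgt(sc)$; so $su_c$ is itself a minimum-length representative of $sc$ and $T(s)T(u_c)=T(su_c)\equiv\bar T(u_{sc})\pmod J$. When $\hgt(sc)<\hgt(c)$, the direct product $T(s)T(u_c)$ can produce a non-minimum-length element and will not obviously lie in $N$; the key trick is to exploit our freedom to choose $u_c$. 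Starting from any minimum-length representative $v$ of $sc$, set $u_c:=sv$; this lies among the minimum-length representatives of $c$, since $\hgt(c)\le\ell(sv)\le\ell(v)+1=\hgt(c)$. For this adapted choice, $T(s)T(u_c)=T(s)T(s)T(v)=(q-1)T(sv)+qT(v)=(q-1)T(u_c)+qT(v)\equiv(q-1)\bar T(c)+q\bar T(sc)\pmod J$, which lies in $N$. Independence of $\bar T(u_c)$ from the choice of representative then extends the conclusion to every minimum-length representative, completing the proof of closure and hence of the lemma.
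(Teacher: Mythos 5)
Your proof is correct and follows essentially the same route as the paper's: both reduce to showing that, modulo the candidate ideal, the $\Z[q]$-span of the classes of minimal coset representatives is stable under each $T(s)$, and both handle the height-decreasing case via the observation that $s$ times a minimal representative of $sc$ is again a minimal representative of $c$. The paper packages this as an explicit identity involving the generator $T(\phi(x))-T(s\phi(sx))$ rather than as your change of representative $u_c:=sv$, but the computation is the same.
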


\begin{proof}
Since for any minimal coset representative $w$, $T(w)T(H)=T(wH)$, we see
that the above elements are indeed in the ideal, so it remains to show that
they generate.  Choose a map $\phi:W/H\to W$ with the property that
$\phi(x)H=x$ and $\hgt(x)=\ell(\phi(x))$ for all $x\in W/H$; i.e., $\phi$
chooses a minimal representative of each coset.  The elements
$T(\phi(x))T(H)=T(x)$, $x\in W/H$ form a basis of $T(W/H)$, so we need
simply to show that modulo the possibly smaller ideal, every element $T(w)$
is congruent to a linear combination of elements $T(\phi(x))$.

By induction on $\ell(w)$, it suffices to show this for an element of the
form $T(s)T(\phi(x))$.  If $\hgt(sx)=\hgt(x)+1$, then $s\phi(x)$ is a
minimal coset representative, and therefore
\[
T(s\phi(x))-T(\phi(sx))
\]
is one of our chosen generators.  Otherwise, $\hgt(sx)=\hgt(x)-1$, in which
case $s\phi(sx)$ is a minimal coset representative,
\[
T(\phi(x))-T(s\phi(sx))
\]
is one of our chosen generators, and
\[
T(s\phi(x)) - (T(s)-(q-1))\bigl(T(\phi(x))-T(s\phi(sx))\bigr)
=
(q-1)T(\phi(x))+q T(\phi(sx)).
\]
\end{proof}

This set of generators is highly redundant, however, so we would like to
find a small subset that still generates $I_X$.  In general, it is too much
to hope for this subset to be finite, but we can still reduce the
complexity considerably.  Note first that if $w$, $w'$ are minimal
representatives of the same coset of $H\subseteq W$, and there exists a
simple reflection $s$ such that $\ell(sw)=\ell(sw')=\ell(w)-1$, then we
have
\[
T(w)-T(w') = T(s)(T(sw)-T(sw')),
\]
so any such generator is redundant.  More generally, if we could find an
element $w''$ such that $\ell(sw)=\ell(sw'')=\ell(w)-1$ and
$\ell(tw')=\ell(tw'')=\ell(w')-1$, $s,t\in S$, then
\[
T(w)-T(w') = \bigl(T(w)-T(w''))-\bigl(T(w')-T(w''))
\]
is also redundant.  In general, $T(w)-T(w')$ will be redundant so long as
there is any path from $w$ to $w'$ along the above lines.  There is,
however, an implied condition on $s$ and $t$, namely that $wH$ is
$\{s,t\}$-maximal, and moreover that the $\langle s,t\rangle$-orbit of $wH$
has size $2m(s,t)=|\langle s,t\rangle|$.  After all, $w''$ must have a
reduced expression beginning with one side of the braid relation between
$s$ and $t$, and removing that subword gives a reduced expression for the
minimal element of the $\langle s,t\rangle$-orbit.  This suggests looking
at the following graph $\Gamma_x$ for each element $x\in W/H$: the vertices
of $\Gamma_x$ are precisely the simple reflections such that
$\hgt(sx)=\hgt(x)-1$, while the edges are the pairs $\{s,t\}$ such that the
$\langle s,t\rangle$-orbit of $x$ has size $2m(s,t)$.  The above
considerations tell us that if the graph is connected, then {\em all}
relations of the form $T(w)-T(w')$ arising from $x$ are redundant, i.e.,
can be expressed in terms of relations arising from elements of smaller
height.  If the graph is not connected, we need simply add enough relations
so that the corresponding additional edges make the graph connected.

Given a pair $\{s,t\}$ of vertices of $\Gamma_x$ which is {\em not} an
edge, there is a particularly nice choice of generator, as follows.
Consider the $\langle s,t\rangle$-orbit of $x$, of size $2k$, $k$ strictly
dividing $m(s,t)$, and let $y$ be the minimal element of that orbit, of
height $\hgt(y)=\hgt(x)-k$.  Choose an expression $y=wH$ with
$\ell(w)=\hgt(y)$, and observe that
\[
(st)^{k/2} w\quad\text{and}\quad (ts)^{k/2} w
\]
both give rise to reduced expressions for $x$ (with the obvious
interpretation of the $k/2$ power when $k$ is odd, as in the braid
relation), and thus
\[
T((st)^{k/2} w)-T((ts)^{k/2} w)
\]
is in the annihilator ideal.  We call such generators {\em dihedral} generators based
at $x$.  Since adding an edge connecting vertices in different components
of $\Gamma_x$ reduces the number of components by 1, we obtain the
following result.

\begin{thm}
The ideal $I_X$ has a generating set consisting of dihedral generators, in
such a way that the number of dihedral generators based at $x$ in the
generating set is one less than the number of components of $\Gamma_x$.
\end{thm}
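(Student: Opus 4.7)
I would proceed by induction on $\hgt(x)$, showing that for each coset $x\in W/H$, the relations $T(w)-T(w')$ among the minimal representatives of $x$ are generated by relations arising from cosets of strictly smaller height together with a chosen set of $(\#\text{components of }\Gamma_x)-1$ dihedral generators based at $x$. Write $L(w)=\{s\in S:\ell(sw)<\ell(w)\}$ for the left descent set of a minimal representative $w$ of $x$; since $s\in L(w)$ forces $\hgt(sx)\le\ell(sw)=\hgt(x)-1$, every such $s$ is a vertex of $\Gamma_x$, so $L(w)\subseteq \Gamma_x$. Define an equivalence relation $\sim$ on the minimal representatives of $x$ by declaring $w\sim w'$ iff $T(w)-T(w')$ lies in the ideal generated by the relations coming from cosets of smaller height.

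The first key observation is that $L(w)\cap L(w')\ne\emptyset$ implies $w\sim w'$: for $s$ in the intersection, $sw$ and $sw'$ are minimal representatives of the coset $sx$ of smaller height, and $T(w)-T(w')=T(s)(T(sw)-T(sw'))$ lies in the desired ideal by the inductive hypothesis. The second observation is that $L(w)$ lies in a single component of $\Gamma_x$ for each fixed $w$: if $s,t\in L(w)$ are distinct, the classical exchange property in $W$ yields a reduced expression $w=(st)^{m(s,t)/2}v$ (in the sense of the braid relation) with $\ell(v)=\ell(w)-m(s,t)$, and the coset $vH$ lies in the $\langle s,t\rangle$-orbit of $x$ at height $\hgt(x)-m(s,t)$, forcing this orbit to attain its maximal possible size $2m(s,t)$; thus $\{s,t\}$ is an edge of $\Gamma_x$.

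Third, edges of $\Gamma_x$ produce no essentially new relation: if $\{s,t\}$ is an edge with orbit minimum $y$ and $w_y$ a minimal representative of $y$, then $y$ has no left descent in $\{s,t\}$, so neither does $w_y$ (as a word in $W$), whence both $(st)^{m(s,t)/2}w_y$ and $(ts)^{m(s,t)/2}w_y$ are reduced expressions for the same element of $W$ (by the braid relation) and both are minimal representatives of $x$. Thus these two $T$-values are literally equal, yielding one representative of $x$ with $s$ as left descent that is trivially $\sim$-equivalent to one with $t$ as left descent. Combining this with the first observation and transitivity, all minimal representatives of $x$ whose descent sets meet a fixed component of $\Gamma_x$ are mutually $\sim$-equivalent, so there are at most as many $\sim$-classes as components of $\Gamma_x$.

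To bridge the components, choose a spanning tree on the set of components of $\Gamma_x$, and for each tree edge connecting components $C_1,C_2$ pick $s\in C_1,\, t\in C_2$; since $\{s,t\}$ is not an edge of $\Gamma_x$, the $\langle s,t\rangle$-orbit of $x$ has size $2k$ with $k$ a proper divisor of $m(s,t)$, and the dihedral generator $T((st)^{k/2}w_y)-T((ts)^{k/2}w_y)$ (with $w_y$ a minimal representative of the orbit minimum $y$) is a genuine, nonzero element of $I_X$ connecting a representative of $x$ with left descent $s$ to one with left descent $t$. Adding these $(\#\text{components})-1$ generators merges all $\sim$-classes into one, completing the inductive step. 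The main technical point is observation two---that two simultaneous left descents of a single minimal representative force the $\langle s,t\rangle$-orbit of $x$ to attain its maximal size---since it is here that we must translate between a purely combinatorial feature of the word in $W$ and the orbit structure on $X$; once that is in place, the remaining steps are bookkeeping with the braid relation.
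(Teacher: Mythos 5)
Your argument is correct and follows essentially the same route as the paper's (which is given informally in the discussion preceding the theorem): reduce relations with a common left descent to lower height, observe that two simultaneous descents of a single minimal representative force the full dihedral orbit and hence an edge of $\Gamma_x$, and bridge the remaining components with one dihedral generator per spanning-tree edge. Your write-up merely makes explicit (via the parabolic decomposition $w=w_0(\{s,t\})v$ and the equivalence relation $\sim$) the steps the paper leaves implicit, so no further comment is needed.
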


\begin{rem}
  When $X$ is finite, this set of generators is finite (even if $W$ itself
  is not), and in practice is quite small.  For instance, in Example
  \ref{eg:perfect} below, we mention a quasiparabolic $E_8\times A_2\times
  A_2$-set $X$ of size 113400 for which the above generating set consists
  of only 8 elements, whereas the first generating set we gave is {\em
    significantly} larger.  Also observe that this gives rise to a fairly
  small set of generators of the quasiparabolic subgroup $H$ stabilizing
  the minimal element.
\end{rem}

\section{Poincar\'e series}\label{sec:PS}

Given a quasiparabolic subgroup $H\subset W$, one natural invariant is the
Poincar\'e series
\[
\PS_{W/H}(q):=\sum_{x\in W/H} q^{\hgt(x)} \in \Z[[q]].
\]
In the case $H=W_I$, one has
\[
\PS_{W/W_I}(q)\PS_{W_I}(q) = \PS_W(q),
\]
where the other two Poincar\'e series are in terms of the regular
representation (i.e., the usual Poincar\'e series of a Coxeter group).  In
particular, it follows that when $W$ is finite, the zeroes of
$\PS_{W/W_I}(q)$ are roots of unity, as this is true for $\PS_W(q)$.  The
proof of this fact relies heavily on the fact that the restriction of $W$
to a $W_I$-set is a union of $[W:W_I]$ copies of $W_I$, with appropriately
shifted heights.  No such decomposition exists for a general quasiparabolic
subgroup, and yet the following still holds.

\begin{thm}\label{thm:Poincare_divides}
Let $H$ be a quasiparabolic subgroup of the finite Coxeter group $W$.  Then
$\PS_{W/H}(q)$ is a divisor of $\PS_W(q)$.
\end{thm}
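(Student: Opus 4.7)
The plan is to use the Hecke-module construction of Section \ref{sec:hecke}. Set $X = W/H$ and, after shifting heights (which rescales both Poincar\'e series by a common power of $q$), assume the minimal element $x_0 = H$ has height $0$. I will construct an $H_W(q)$-module homomorphism $\phi : H^+_X(q) \to \1_+$ (the trivial representation, on which every $T(s)$ acts as multiplication by $q$) by the formula $\phi(T(x)) := q^{\hgt(x)}$. That $\phi$ respects the $T(s)$-action reduces to three cases in the piecewise definition of the module structure; the only nontrivial one is $\hgt(sx) = \hgt(x) - 1$, where $(q-1) q^{\hgt(x)} + q \cdot q^{\hgt(x)-1} = q \cdot q^{\hgt(x)}$ works out exactly as needed.

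A parallel analysis identifies the trivial isotypic subspace of $H^+_X(q)$: expanding the equation $T(s) a = q a$ for $a = \sum_x a_x T(x) \in H^+_X(q)$ forces $a_{sx} = a_x$ for every $s \in S$, $x \in X$, and transitivity of $X$ then forces $a$ to be a $\Z[q]$-multiple of $\1_X := \sum_{x \in X} T(x)$.  Now consider $e_+ := \sum_{w \in W} T(w) \in H_W(q)$, which (since $W$ is finite) lies in the Hecke algebra and satisfies $T(s) e_+ = q e_+$ for every $s \in S$ by the standard pairing-up argument over $W$. Consequently $e_+ \cdot T(x_0) \in H^+_X(q)$ lies in the trivial isotypic, so
\[
e_+ \cdot T(x_0) = c \cdot \1_X
\]
for some (unique) $c \in \Z[q]$.

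Applying $\phi$ to both sides finishes the proof: the left side is $\sum_{w \in W} q^{\ell(w)} \phi(T(x_0)) = \PS_W(q)$, while the right side is $c \cdot \sum_{x \in X} q^{\hgt(x)} = c \cdot \PS_{W/H}(q)$, so $\PS_W(q) = c \cdot \PS_{W/H}(q)$ in $\Z[q]$, as required. The main obstacle is the three-case verification that $\phi$ is a genuine module map; everything else is formal. As a sanity check in the parabolic case $H = W_I$, one computes directly that $e_+ T(W_I) = \PS_{W_I}(q) \cdot \1_X$, recovering the classical identity $\PS_W(q) = \PS_{W_I}(q) \PS_{W/W_I}(q)$ and suggesting that the quotient $c$ plays the role of a ``Poincar\'e series of $H$'' even when no Hecke subalgebra attached to $H$ exists.
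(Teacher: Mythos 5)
Your proof is correct and is essentially the paper's argument in different packaging: both hinge on applying $\sum_{w\in W}T_+(w)$ in the module $H^+_{W/H}(q)$ and recognizing the result as a $\Z[q]$-multiple of $\sum_{x}T(x)$, which is exactly the identity $\PS_W(q)\sum_{x}T(x)=\PS_{W/H}(q)\bigl(\sum_{w}T_+(w)\bigr)T(H)$ that the paper obtains by computing one product in two ways. Your extra scaffolding --- the explicit homomorphism $\phi$ onto the trivial representation and the determination of the trivial isotypic component --- merely replaces the paper's comparison of coefficients, so the underlying computation is the same.
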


\begin{proof}
We compute the product
\[
\label{eq:sumsum}
(\sum_{w\in W} T_+(w))(\sum_{x\in W/H} T(x))
\]
in two different ways.  On the one hand, for any $w\in W$,
\[
T_+(w)\sum_{x\in W/H}T(x) = q^{\ell(w)}\sum_{x\in W/H} T(x),
\]
by induction on $\ell(w)$, so the product \eqref{eq:sumsum} is
\[
\PS_W(q) \sum_{x\in W/H} T(x).
\]
On the other hand, for all $x\in W/H$,
\[
(\sum_{w\in W} T_+(w))T(x) = q^{\hgt(x)} (\sum_{w\in W} T_+(w))T(H),
\]
by induction on $\hgt(x)$.
Therefore,
\[
\PS_{W/H}(q) (\sum_{w\in W} T_+(w))T(H)
=
\PS_W(q) \sum_{x\in W/H} T(x).
\]
But the coefficients of
\[
(\sum_{w\in W} T_+(w))T(H)
\]
lie in $\Z[q]$, and thus so does the ratio of Poincar\'e series.
\end{proof}

\begin{rem}
We may thus define a Poincar\'e series of $H$ as the ratio
\[
\PS_H(q) = \frac{\PS_W(q)}{\PS_{W/H}(q)}.
\]
As observed in Remark 1 following Corollary \ref{cor:induce_from_para},
this is preserved by induction from parabolic subgroups.  Note that because
the proof that $\PS_H(q)$ is a polynomial is not combinatorial in nature,
we do not obtain an interpretation of $\PS_H(q)$ as a generating function for
elements of $H$.  There does, however, seem to be a surprising
amount of structure in $\PS_H(q)$.  For instance, Example \ref{eg:E} below discusses
a quasiparabolic subgroup $H\subset E_8$ with
\[
\PS_H(q) = \frac{(1-q^2)(1-q^{12})(1-q^{20})(1-q^{30})}{(1-q)^4}
         = \PS_{H_4}(q),
\]
and indeed $H$ is abstractly isomorphic to the Coxeter group $H_4$; similar
agreement appears to occur whenever the quasiparabolic subgroup is
abstractly isomorphic to a Coxeter group.  Even more strikingly, there are
several examples of Poincar\'e series of a similar form in which the
degrees of invariants are replaced by degrees of invariants in
characteristic 2, see Example \ref{eg:char2_PS} below.
\end{rem}

\begin{cor}
With hypotheses as above,
\[
q^m \PS_{W/H}(q^{-1})  = \PS_{W/H}(q),
\]
where $m=\max_{x\in W/H}\hgt(x)$.
\end{cor}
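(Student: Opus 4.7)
The plan is to combine Theorem \ref{thm:Poincare_divides}, which asserts that $\PS_{W/H}(q)$ divides $\PS_W(q)$ in $\Z[q]$, with the classical factorization of $\PS_W(q)$ into palindromic cyclotomic polynomials. In other words, the corollary is essentially a purely algebraic consequence: a monic $\Z[q]$-divisor of a product of palindromic cyclotomic factors must itself be palindromic.

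First, I would recall that for finite $W$ with degrees $d_1,\dots,d_n\ge 2$, one has $\PS_W(q)=\prod_i [d_i]_q$, where $[d]_q=1+q+\cdots+q^{d-1}=\prod_{e\mid d,\,e\ge 2}\Phi_e(q)$ and $\Phi_e$ denotes the $e$-th cyclotomic polynomial. Since for $e\ge 2$ the set of primitive $e$-th roots of unity is closed under inversion, each $\Phi_e(q)$ is a monic palindrome of degree $\phi(e)$: $q^{\phi(e)}\Phi_e(q^{-1})=\Phi_e(q)$. Thus every irreducible factor of $\PS_W(q)$ in $\Z[q]$ is palindromic.

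Second, I would observe that $\PS_{W/H}(q)$ is monic of degree $m$. Indeed, since $W$ is finite the transitive quasiparabolic set $W/H$ is bounded above, so it contains an element of maximum height $m$; by the corollary that each orbit of a quasiparabolic $W$-set contains at most one $W$-maximal element (the dual form of the $W$-minimal uniqueness statement), this element is unique, contributing a leading term $q^m$ with coefficient $1$.

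Finally, because $\Z[q]$ is a UFD and $\PS_{W/H}(q)\in\Z[q]$ is a monic divisor of $\PS_W(q)$, it must equal a product $\prod_{e\ge 2}\Phi_e(q)^{f_e}$ of some subcollection (with multiplicities) of the cyclotomic factors of $\PS_W(q)$. A product of palindromic polynomials is palindromic, with degree equal to the sum of the degrees of the factors, namely $m$, so $q^m\PS_{W/H}(q^{-1})=\PS_{W/H}(q)$. I do not anticipate any serious obstacle here: the real work is in Theorem \ref{thm:Poincare_divides}, and the cyclotomic factorization is entirely standard.
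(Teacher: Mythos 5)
Your proof is correct and follows essentially the same route as the paper: both arguments rest on Theorem \ref{thm:Poincare_divides} together with the fact that all zeros of $\PS_W(q)$ are roots of unity, plus monicity of $\PS_{W/H}(q)$ coming from the uniqueness of the maximal element. The only cosmetic difference is that the paper deduces palindromicity by noting the root multiset of the divisor is closed under complex conjugation (hence inversion), whereas you factor explicitly into cyclotomic polynomials $\Phi_e$ with $e\ge 2$ via unique factorization in $\Z[q]$; both packagings are sound.
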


\begin{proof}
It is known that the zeros of $\PS_W(q)$ are all roots of unity, and thus
the same is true for its divisor $\PS_{W/H}(q)$.  Since $\PS_{W/H}(q)$ has
integer coefficients, its roots are permuted by the absolute Galois group
of $\Q$.  In particular, for every root of $\PS_{W/H}(q)$, its complex
conjugate is also a root, or in other words, the reciprocal of each root is
a root.  But then
\[
q^m \PS_{W/H}(q^{-1})
\]
has the same roots with multiplicities as $\PS_{W/H}(q)$ (note that
$m=\deg(\PS_{W/H}(q))$).  Since $W/H$ has unique maximal and minimal
elements, both polynomials are monic, with the same roots, so must agree.
\end{proof}

If $W$ is infinite, it is too much to hope for the ratio
$\PS_W(q)/\PS_{W/H}(q)$ to be a polynomial.  When $W$ is affine, we suspect
that an analogous statement should hold, to wit that the ratio is a
rational function, with zeros and poles only at roots of unity.  The above
methods appear to be completely insufficient for this case, however.

Despite the symmetry of the corollary, the scaled $W$-sets $(W/H,\hgt)$
and $(W/H,m-\hgt)$ are not in general isomorphic.  In particular, in such a
case, we obtain two different deformations of the same permutation
representation.  We conjecture that not only are these deformations isomorphic,
but also that the isomorphism can be chosen to have 
a particularly nice form.
  If $(X,\hgt)$ is a scaled
$W$-set, let $X^-$ denote the scaled $W$-set $(X,-\hgt)$.

\begin{conj}\label{conj:R}
  Let $H\subset W$ be a quasiparabolic subgroup.  Then there is an
  isomorphism (with coefficients in $\Z[q,1/q]$)
\[
H^{\pm}_{W/H}(q)\cong H^{\pm}_{(W/H)^-}(q)
\]
of $H_W(q)$-modules in which $T^{\pm}(H)$ maps to $T^{\pm}(H)$.
\end{conj}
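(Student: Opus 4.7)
The conjecture is equivalent to the equality of the left annihilator ideals of $T(H)$ in the two modules, since both are cyclic $H_W(q) \otimes_{\Z[q]} \Z[q, q^{-1}]$-modules generated by $T(H)$. By the involutivity of the negation construction $(X, \hgt) \mapsto (X, -\hgt)$, it suffices to establish one inclusion. Proposition \ref{prop:even_module} reduces matters to the even case: proving the analogous statement for the even double cover (as a $W \times A_1$-set, with $T(\tilde H) \mapsto T(\tilde H)$) implies the conjecture for both $H^{+}_{W/H}(q)$ and $H^{-}_{W/H}(q)$ via the decomposition $H_{\tilde X}(q) \cong H^{+}_X(q) \oplus H^{-}_X(q)$ over $\Z[q, (q+1)^{-1}]$.

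Assume henceforth $H$ is even. By the efficient generating set described at the end of Section \ref{sec:hecke}, the annihilator is generated by the elements $T(s) - q$ for $s \in H\cap S$ together with the dihedral generators $T((st)^{k/2}w) - T((ts)^{k/2}w)$ attached to $\langle s, t\rangle$-orbits of size $2k$ with $k$ properly dividing $m(s,t)$, where $y = wH$ is the $\langle s, t\rangle$-minimum of the orbit and $\ell(w) = \hgt(y)$. The generators of the first kind are immediate: for $s \in H \cap S$ one has $sH = H$ in both $W/H$ and $(W/H)^{-}$ regardless of the height function, so $T(s) \cdot T(H) = q\,T(H)$ in both modules. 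For the dihedral generators, I would restrict $W/H$ to the rank-two parabolic subgroup $\langle s, t\rangle$, which remains quasiparabolic by the restriction property, localizing the required identity $T((st)^{k/2}w) \cdot T(H) = T((ts)^{k/2}w) \cdot T(H)$ in $H^{+}_{(W/H)^{-}}(q)$ to the $\langle s, t\rangle$-orbits of $x$ and of $H$.

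The main obstacle is the rank-two calculation. In principle one could classify quasiparabolic sets for finite dihedral groups and verify each case, but a cleaner approach would proceed via the bar involution $\iota$ of $H_W(q)\otimes \Z[q, q^{-1}]$ defined by $\iota(q) = q^{-1}$ and $\iota(T(s)) = T(s)^{-1}$. If one can show that $\iota$ carries the annihilator of $T(H)$ in $H^{+}_{W/H}(q)$ to the annihilator of $T(H)$ in $H^{+}_{(W/H)^{-}}(q)$, then $\iota$ induces an $\iota$-semilinear module isomorphism between the two modules that fixes $T(H)$; combined with a parallel semilinear involution on either side (whose existence is essentially equivalent to the same ideal-invariance statement), this yields the desired $\Z[q, q^{-1}]$-linear isomorphism, and the transition coefficients become the hoped-for $R$-polynomial analogues alluded to in the remarks preceding the conjecture. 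Verifying $\iota$-compatibility on the dihedral generators again reduces, via restriction, to rank-two computations, but the bar-involution framework organizes these identities in a structurally uniform way that should make the required case analysis tractable.
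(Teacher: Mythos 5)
The first thing to note is that this statement is Conjecture \ref{conj:R}: the paper does not prove it. What the paper does is (i) observe, much as you do, that both modules are cyclic over $\Z[q,q^{-1}]$ with generator $T(H)$, so the isomorphism exists iff the two annihilator ideals coincide --- equivalently, iff the $\Z[q]$-algebra automorphism $T(s)\mapsto q-1-T(s)$ carries the annihilator of $T(H)$ in $T^{\pm}(W/H)$ to that in $T^{\mp}(W/H)$ --- and (ii) report that, using the small dihedral generating sets from the end of Section \ref{sec:hecke}, this has been verified by computer in every finite case checked. So there is no proof in the paper to compare against, and the real question is whether your argument closes the gap. It does not.

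The decisive step fails. A dihedral generator has the form $T((st)^{k/2}w)-T((ts)^{k/2}w)$, where $w$ is a minimal-length representative of the $\langle s,t\rangle$-minimal coset $y=wH$; this $w$ is an arbitrary element of $W$, not of $\langle s,t\rangle$. To show that this element kills $T(H)$ in $H^{\pm}_{(W/H)^-}(q)$ you must evaluate $T(w)T(H)$ in the negated module, where $H$ is now the \emph{maximal} element, so already $T(s_i)T(H)=(q-1)T(H)+qT(s_iH)$ and $T(w)T(H)$ is a linear combination spread over a whole Bruhat interval. Restricting the $W$-set to the parabolic $\langle s,t\rangle$ does not see this computation, so the problem does not localize to rank two as claimed; this is precisely where the difficulty lives. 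Your fallback via the bar involution is explicitly conditional (``if one can show that $\iota$ carries the annihilator\dots''), and the auxiliary semilinear involution you would combine it with is, as you note yourself, essentially equivalent to the statement being proved --- it is exactly the structure whose existence would yield the $R$-polynomial analogues, i.e., the content of the conjecture. Two smaller points: after reducing to the even case one has $H\cap S=\emptyset$ (simple reflections have odd length), so your generators $T(s)-q$ are vacuous there; and the ``one inclusion suffices by involutivity'' step needs justification, since $(W/H)^-$ is not of the form $W/H'$ with $T(H)$ the minimal element, so the inclusion you propose to prove cannot simply be applied to the negated set.
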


Recall that on the left, the Hecke algebra acts by
\[
T(s)T_+(x)
=
\begin{cases}
T_+(sx) & \hgt(sx)>\hgt(x)\\
\epsilon_{\pm} T_+(x)  & \hgt(sx)=\hgt(x)\\
(q-1) T_+(x)+qT_+(sx) & \hgt(sx)<\hgt(x)
\end{cases}
,
\]
while on the right, the Hecke algebra acts by
\[
T(s)T_-(x)
=
\begin{cases}
T_-(sx) & \hgt(sx)<\hgt(x)\\
\epsilon_{\pm} T_-(x)  & \hgt(sx)=\hgt(x)\\
(q-1) T_-(x)+qT_-(sx) & \hgt(sx)>\hgt(x)
\end{cases}
\]
We may rewrite this in terms of $T'(s) = q-1-T(s)$, $T'_-(x)=(-q)^{\hgt(x)}T_-(x)$:
\[
T'(s)T'_-(x)
=
\begin{cases}
T'_-(sx) & \hgt(sx)>\hgt(x)\\
\epsilon_{\mp} T'_-(x)  & \hgt(sx)=\hgt(x)\\
(q-1)T'_-(x)+qT'_-(sx) & \hgt(sx)<\hgt(x).
\end{cases}
\]
It follows that the desired isomorphism exists iff the annihilator of
$T(H)$ in $T^{\pm}(W/H)$ is taken to the annihilator of $T(H)$ in
$T^{\mp}(W/H)$ by the automorphism $T(s)\mapsto T'(s)$.

Since in finite cases we can find relatively small generating sets for
these annihilators, it is straightforward in most cases to verify the
existence of these isomorphisms.  In particular, this isomorphism exists in
every finite case we have checked.

Note that the coefficients of such an isomorphism would give an analogue
for quasiparabolic $W$-sets of the $R$-polynomials of Kazhdan-Lusztig
theory.  There is a formula for the latter polynomials due to Deodhar
\cite{DeodharVV:1985} expressed in terms of a generating function for
``distinguished'' subexpressions.  It appears that there is no obstacle to
constructing the corresponding generating function in the quasiparabolic
setting, but Deodhar's proof makes essential use of the fact that the
natural family of recurrences for $R$-polynomials are consistent.  In other
words, if we knew that our isomorphism existed, there would almost
certainly be a formula for the relevant coefficients \`a la Deodhar.

In addition, since the theory of Kazhdan-Lusztig polynomials themselves has
analogues for quotients by parabolic subgroups, we expect there to be a
corresponding analogue for quotients by quasiparabolic subgroups.
\section{Examples}\label{sec:egs}

\begin{eg}\label{eg:bozo}
  We begin with an example of a non-quasiparabolic $W$-set.  Let $B_3$ be
  the hyperoctahedral group of signed permutations, with simple reflections
  $(12)$, $(23)$, and $(3)_-$, and consider the subgroup $H\subset B_3$ of
  order 8 generated by $(13)(2)_-$, $(1)_-$, and $(3)_-$, and the
  corresponding self-dual scaled $B_3$-set $B_3/H$.  This, together with
  its even subgroup is the only indecomposable example we know of a
  non-quasiparabolic subgroup such that all restrictions to rank $2$ are
  quasiparabolic.  We also note that this subgroup violates many of the
  conclusions of our theorems above; for instance, it fails to map to a
  quasiparabolic subgroup of $A_2\times A_1$ (not even when restricted to
  rank 2), and does not induce a well-behaved Bruhat order.
\end{eg}

\begin{eg}\label{eg:perfect}
One of the more fruitful constructions is the fact that perfect involutions
form quasiparabolic $W$-sets.  As we mentioned above, it is relatively
straightforward to classify perfect involutions in finite Coxeter groups.
Note that if $W$ is a product, then the relevant diagram automorphism
permutes the factors of $W$, and if the action on factors is not
transitive, then the corresponding sets of perfect involutions are just
products of the sets of perfect involutions from each orbit.  In addition,
if $W=W'\times W'$ and the diagram automorphism swap the factors, then up
to conjugation by a further diagram automorphism, the only perfect
involutions come from the diagonal.  Thus the only interesting cases arise
from diagram automorphisms of simple Coxeter groups.

In type $A$, as we mentioned above, there are two noncentral conjugacy
classes of perfect involutions, one with the trivial diagram automorphism,
and one with the nontrivial diagram automorphism.  Indeed, it is easy to
see that a perfect involution must act as the inner automorphism
corresponding to a fixed-point-free involution.  (If $r$ is the reflection
swapping a fixed point and a non-fixed point of the image of $\iota$ in
$S_n$, then $\iota r$ would have order $3$ or $6$.)  Note that although the
two resulting quasiparabolic sets are dual (i.e., related by negating
heights), the stabilizers of their respective minimal elements are
qualitatively quite different.  For instance, the centralizer of the
diagram automorphism contains only one simple reflection, while the
centralizer of the minimal fixed-point-free involution in $S_{2n}$ contains
$n$ simple reflections.  However, it follows by considering the
corresponding ideals that the two Hecke algebra modules are isomorphic as
in Conjecture \ref{conj:R} and the discussion following it.

In type $B/C$, a noncentral order 2 element of $B_n$ is a perfect
involution iff its image in $S_n$ is the identity or a fixed-point-free
involution.  In the former case, the small subgroup $z_W(\iota)$ contains
the kernel of the natural Coxeter homomorphism $B_n\to A_{n-1}\times A_1$,
and the image is a product of the form $\Alt_j\times \Alt_{n-j}$, embedded
in the natural way.  In the latter case, $z_W(\iota)$ is just the even
subgroup of $Z_W(\iota)$.

In type $D$, again the image in $S_n$ of a perfect involution must be the
identity or a fixed-point-free involution.  The first case includes all
perfect involutions that involve the nontrivial diagram automorphism, and
is analogous to the $B$ case.  The fixed-point-free involutions now come in
two conjugacy classes (swapped by the diagram automorphism), and the
corresponding quasiparabolic sets are dual (i.e., differ by reversing the
heights).  As mentioned above, this is the one case where we do not know
whether 
$z_W(\iota)$ (an index 4 subgroup of the centralizer of the minimal
$\iota$) is quasiparabolic.

It remains to consider the sporadic cases.  Other than $B_2$, no dihedral
group has a noncentral perfect involution (even including diagram
automorphisms), and similarly for $H_3$ and $H_4$.  For $F_4$, the
noncentral perfect involutions (none of which involve the nontrivial
diagram automorphism) form a single conjugacy class generated by the
longest element of the parabolic $B_2\subset F_4$.  In each of $E_6$,
$E_7$, and $E_8$, the longest element of the parabolic $D_4$ is perfect; in
$E_6$ and $E_7$, one also has another conjugacy class of perfect
involutions giving a dual quasiparabolic set.  (For $E_6$, take the
conjugacy class of the nontrivial diagram automorphism; for $E_7$, the
other minimal involution is the product of three commuting roots.)  

Note that for $E_8$, the small group $z_W(\iota)$ has index $36=|A_2\times
A_2|$ in the centralizer, so we obtain an action of $E_8\times A_2\times
A_2$ on a set of order $113400$.  Explicit computation gives the eight dihedral
generators mentioned above: six of length $2$ and two of length $8$.  For
$E_7$, the small group has index $12=|A_2\times A_1|$ giving a
quasiparabolic action of $E_7\times A_2\times A_1$ on a set of size $3780$.
\end{eg}

Given a transitive quasiparabolic $W$-set $X$, sometimes we can extend the
action to a larger Coxeter group $W'$ in which $W$ is standard parabolic,
while retaining quasiparabolicity.  Note that by Theorem
\ref{thm:bruhat_para}, the Bruhat order as a $W'$-set will be the same as
the original Bruhat order.  We can thus produce a relatively short list of
candidates for the actions of simple reflections in $W'$.  We find, at
least when $X$ has even stabilizers that a simple reflection in $W'$ must
be a special matching in the sense of
\cite{BrentiF/CaselliF/MariettiM:2006}.  There is, of course, the
additional requirement that if we adjoin a new simple reflection, that all
resulting new reflections must only swap comparable elements.  Note,
however, that even when $W'$ is infinite, if $X$ is finite, the image of
$W'$ in $\Sym(X)$ is finite, so it is a finite computation to verify
quasiparabolicity.  Moreover, only the condition of comparability remains
to be checked, via Proposition \ref{prop:QP_via_Bruhat}.  Note that as a
special case, if $W$ is a Weyl group, and the reflection in one of its
highest roots induces a special matching, we can always extend to a
quasiparabolic action the corresponding affine Weyl group, as this will not
enlarge the image of $R(W)$ inside $\Sym(X)$.

\begin{figure}[!ht]
\ifx\JPicScale\undefined\def\JPicScale{1}\fi
\unitlength \JPicScale mm
\begin{center}
\begin{picture}(122.5,75)(0,75)
\linethickness{0.7mm}
\multiput(70,140)(0.24,-0.12){83}{\line(1,0){0.24}}
\multiput(90,130)(0.12,-0.24){83}{\line(0,-1){0.24}}
\multiput(90,90)(0.12,0.24){83}{\line(0,1){0.24}}
\multiput(70,80)(0.24,0.12){83}{\line(1,0){0.24}}
\multiput(50,90)(0.24,-0.12){83}{\line(1,0){0.24}}
\multiput(40,110)(0.12,-0.24){83}{\line(0,-1){0.24}}
\multiput(40,110)(0.12,0.24){83}{\line(0,1){0.24}}
\multiput(50,130)(0.24,0.12){83}{\line(1,0){0.24}}
\linethickness{0.7mm}
\put(100,110){\line(1,0){20}}
\linethickness{0.7mm}
\put(20,110){\line(1,0){20}}
\put(122.5,110){\makebox(0,0)[bl]{AE8}}
\put(100,112.5){\makebox(0,0)[bl]{E7}}
\put(92.5,130){\makebox(0,0)[bl]{E6}}
\put(92.5,90){\makebox(0,0)[bl]{E678}}
\put(42.5,90){\makebox(0,0)[bl]{A8}}
\put(20,110){\circle*{2}}
\put(40,110){\circle*{2}}
\put(100,110){\circle*{2}}
\put(120,110){\circle*{2}}
\put(70,140){\circle*{2}}
\put(70,80){\circle*{2}}
\put(50,130){\circle*{2}}
\put(50,90){\circle*{2}}
\put(90,130){\circle*{2}}
\put(90,90){\circle*{2}}
\end{picture}
\end{center}
\caption{Coxeter diagram of $O_{10}$}
\label{fig:ears}
\end{figure}
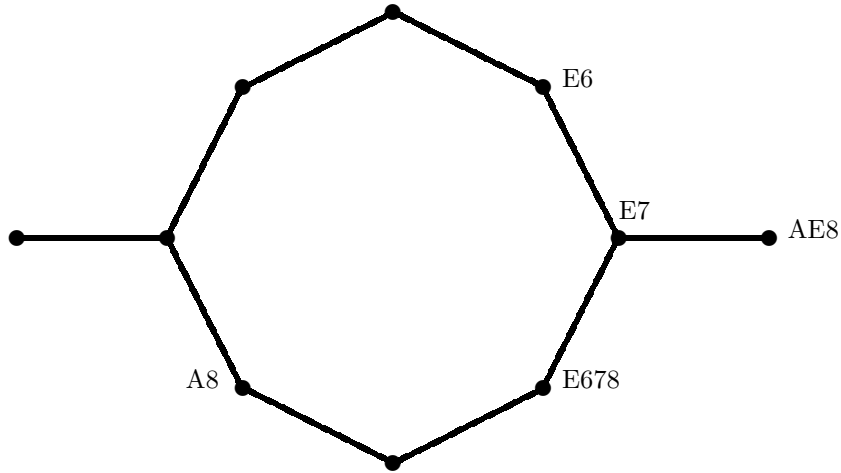

\begin{eg}
  Let $\iota$ be the diagram automorphism of $E_6$, which as we have
  observed is perfect.  We can directly check that the normal subgroup
  $z_{E_6}(\iota)$ (of index $6=|A_2|$ in the centralizer) is
  quasiparabolic, and thus gives rise to a transitive quasiparabolic
  $E_6\times A_2$-set of size $270$.  The stabilizer in $E_6\times A_2$ of
  the minimal element is abstractly isomorphic to $F_4$, and the
  corresponding Poincar\'e series agree.

  There are, it turns out, precisely $10$ special matchings on this set, of
  which $8$ are accounted for by the simple reflections in $E_6\times A_2$.
  If we adjoin the remaining $2$ special matchings, we obtain an action of
  a Coxeter group $O_{10}$ of rank $10$ on $X$, with diagram as in Figure
\ref{fig:ears}

  (The image in $\Sym(X)$ is isomorphic to $E_8/Z(E_8)$,
  but this does not factor through a Coxeter homomorphism $O_{10}\to E_8$.)
  It is computationally straightforward to verify that this gives a
  quasiparabolic action of the quite large Coxeter group $O_{10}$ on $X$;
  moreover, the diagram automorphisms of $O_{10}$ extend to
  Bruhat-preserving automorphisms of $X$.  Since $O_{10}$ is simply-laced,
  it has only one conjugacy class of reflections, which correspond to the
  $120$ reflections of $E_8$.  Various parabolic subgroups of $O_{10}$ act
  transitively: any subgroup containing one of the two parabolic subgroups
  of type $E_6$ is transitive, as are the four parabolic subgroups of type
  $A_8$.

  In particular, we obtain a quasiparabolic subgroup of $E_7$ abstractly
  isomorphic to $\F_2^7\semidirect \PGL_3(2)$, and a quasiparabolic
  subgroup of $E_8$ abstractly isomorphic to $2^{1+6}\semidirect\PGL_4(2)$
  (where $2^{1+6}$ denotes a $2$-group of order $128$ with center of order
  $2$).  We also obtain quasiparabolic actions of the corresponding affine
  groups, in which the additional reflection has the same action on $X$ as
  the reflection in the highest root.  The quasiparabolic subgroup of $A_8$
  we obtain is actually contained in a maximal parabolic subgroup of type
  $A_7$ which we consider in the next example.

$O_{10}$ contains various transitive parabolic subgroups which can be obtained in
several ways, and in particular by removing the $ s \in S$  corresponding to
marked nodes as in Figure \ref{fig:ears}.
One obtains $A_8$ removing the two nodes marked A8 and AE8;
one obtains $E_6 \times A_2$ removing the two nodes marked E6 and E678;
one obtains $\tilde{E_8}$ removing the node marked  E678, and then $E_8$ by
further removing AE8;
one obtains $\tilde{E_7} \times A_1$ removing the node marked  E7,
and then $E_7 \times A_1$ by further removing E678.
\end{eg}

\begin{eg}\label{eg:char2_PS}
The action of $A_8$ in the previous example is induced from an action of
$A_7$ on a set of size $30$, with quasiparabolic subgroup isomorphic to
$\AGL_3(2)$.  The Poincar\'e series of this subgroup has the striking
structure
\[
\PS_{\AGL_3(2)}(q) = \frac{(1-q^4)(1-q^6)(1-q^7)(1-q^8)}{(1-q)^4}.
\]
If $\AGL_3(2)$ had been a Coxeter group, we would have concluded that its
invariant ring was freely generated by elements of degrees $4$, $6$, $7$,
and $8$.  Surprisingly, there is indeed an action of $\AGL_3(2)$ with such
an invariant ring, but in characteristic 2.  Indeed, there are two actions
of $\AGL_3(2)$ on $\bar{\F_2}^4$, one with an invariant subspace of
dimension 1, and one with an invariant subspace of dimension 3.  The latter
has precisely the desired invariants.

We could also have obtained this set as in the previous example, beginning
with either of the classes of perfect involutions in $A_5$.  There is also
a transitive action of $A_4$, induced from the action of $A_3$ on $A_2$ via
the natural Coxeter homomorphism.  In the other direction, the reflection
in the highest root of $A_7$ is a special matching, and thus we obtain a
quasiparabolic action of $\tilde{A}_7$ on this set.

This characteristic $2$ invariant theory phenomenon also arises for two
other quasiparabolic subgroups of symmetric groups.  First, the transitive
action of $A_6$ on the same set of size $30$ has stabilizer $\PGL_3(2)\cong
\GL_3(2)$.  The invariant ring of $\GL_3(2)$ in its $3$-dimensional
characteristic $2$ representation is freely generated by invariants of
degrees $4$, $6$, and $7$, which again agrees with the Poincar\'e series of
the subgroup.

The other example comes from the transitive action of $\Alt_5$ on a set of
$6$ elements.  It turns out that one representative of the resulting
conjugacy class of subgroups of $A_5\cong S_6$ is actually quasiparabolic,
of index $12$.  There is a $3$-dimensional representation of $\Alt_5$ in
characteristic $2$, namely $\Alt_5\cong O^-_3(\F_4)$ (in the version with a
$2$-dimensional invariant subspace), with invariant ring freely generated
by elements of degrees $2$, $5$, and $6$.  Once more, the Poincar\'e series
of this quasiparabolic $\Alt_5$ agrees with the product suggested by the
degrees of invariants.

It should be noted, however, that not all quasiparabolic subgroups have a
Poincar\'e series of this form.  For instance, the Poincar\'e series of the
index $113400$ quasiparabolic subgroup $H\subset E_8$ has the factorization
\[
\PS_H(q) =
\frac{(1+q^3)(1+q^6)(1+q^9)(1+q^5)(1+q^{10})(1+q^{15})
      (1-q^8)(1-q^{12})}
     {(1-q)^2}.
\]
Note that this still has positive coefficients.  There would seem to be no
particular reason why such Poincar\'e series of non-parabolic quasiparabolic
subgroups should have positive coefficients, but we do not know a
counterexample.
\end{eg}

\begin{eg}\label{eg:E}
  Inside $E_8$, apart from quasiparabolic subgroups of parabolic subgroups,
  the above index $270$ example, and the index $113400$ example coming from
  perfect involutions, there are essentially two more quasiparabolic
  subgroups (apart from those obtained via Theorem
  \ref{thm:extend_by_simples}).  The larger of the two (i.e., with the
  smaller $E_8$-set) corresponds to a subgroup of $E_8\times A_1$
  isomorphic to $\F_2^8\semidirect \AGL_3(2)$, of index $4050$.  Again, the
  highest root induces a special matching, so we obtain an action of
  $\tilde{E}_8\times A_1$.  The other corresponds to an even subgroup of
  $E_8$ abstractly isomorphic to $H_4$, of index 48384; as in the
  $F_4\subset E_6$ case above, again the Poincar\'e series of the subgroup
  is the same as its Poincar\'e series as a Coxeter group, despite the fact
  that it is far from being a reflection subgroup of $E_8$.

  For $E_7$ and $E_6$, there are no quasiparabolic subgroups other than
  those already mentioned, or those they produce via Theorem
  \ref{thm:extend_by_simples}.
\end{eg}

\begin{eg}\label{eg:geom}
  The case of fixed-point-free involutions in $A_{2n-1}$ has a nice
  geometric interpretation due to Richardson and Springer
  \cite{RichardsonRW/SpringerTA:1990}.  Let $k$ be an algebraically closed
  field of characteristic $\ne 2$, and consider the algebraic group
  $G=\GL_{2n}(k)$, with the Borel subgroup $B$ of upper triangular
  matrices.  It is classical that the double cosets $B\backslash G/B$ are in
  natural bijection with $A_{2n-1}$, with dimension essentially given by
  length.  If we now consider the subgroup $H=\Sp_{2n}(k)\subset G$, we may
  consider instead the double cosets $B\backslash G/H$.  It turns out that
  these are in natural bijection with fixed-point-free involutions in
  $A_{2n-1}$, or somewhat more naturally, with conjugates of the diagram
  automorphism.  Moreover, the dimension of such a double coset is (up to
  an additive constant) given by height.  The action of $A_{2n-1}$ is
  somewhat tricky to reconstruct from the geometry, but the action of the
  corresponding Hecke monoid is straightforward: if one decomposes
\[
B s B \iota H
\]
into double cosets (where $B\iota H$ denotes the double coset identified
with $\iota$), there will be a unique double coset of maximal dimension,
which gives the image of $\iota$ under the monoid action of $s$.  More
generally, the Bruhat order on the given conjugacy class of involutions
corresponds to inclusion of closures of double cosets.

One can also obtain the corresponding Hecke algebra module for $q$ a prime
power by considering the same double cosets, but now over the finite field
$\F_q$.  It appears that similarly the double cosets
\[
B\backslash \GL_{2n}(\F_q)/\GL_n(\F_{q^2}),
\]
where we map $\F_{q^2}\to\Mat_2(\F_q)$ in the obvious way, give rise to the
Hecke algebra module corresponding to the dual quasiparabolic set.

The (extended) affine Weyl group $\tilde{A}_{2n-1}$ has a similar geometric
interpretation in terms of double cosets of the Iwahori subgroup of
$\GL_{2n}(K)$ where $K$ is now a local field with residue field $\F_q$.
One could then replace one of the Iwahori subgroups with $\Sp_{2n}(K)$ or
$\GL_n(L)$ for either of the two quadratic extensions $L/K$, and consider
the resulting double cosets.  In each case, a back of the envelope
calculation suggests that the double cosets are classified by suitable
conjugacy classes of involutions.  This gives rise to three conjecturally
quasiparabolic actions of $\tilde{A}_{2n-1}$.  The corresponding
(conjectural) ideals in the Hecke algebra were used in \cite{vanish}, along
with two analogous ideals in $H_{\tilde{C}_{2n}}$.
\end{eg}

\bibliographystyle{plain}

\end{document}